\documentclass[11pt,a4paper]{article}
\usepackage[margin=25mm,headheight=14pt,headsep=7mm,
footskip=11mm]{geometry}
\usepackage[T1]{fontenc}
\usepackage{libertinus-type1}
\usepackage{amsmath,amssymb,amsthm,mathtools,mathrsfs,bm}
\usepackage{microtype,enumitem,booktabs,array}
\usepackage{tikz,tikz-cd}
\usetikzlibrary{arrows.meta,positioning,calc}
\usepackage[hidelinks,unicode]{hyperref}
\hypersetup{pdftitle={Symplectic singularity as a hyperKahler cone},pdfsubject={Reference-radius comparison replaces metric-radius continuity, 15 October 2026}}
\numberwithin{equation}{section}
\newtheorem{theorem}{Theorem}[section]
\newtheorem{proposition}[theorem]{Proposition}
\newtheorem{lemma}[theorem]{Lemma}
\newtheorem{corollary}[theorem]{Corollary}
\theoremstyle{definition}

\theoremstyle{remark}
\newtheorem{remark}[theorem]{Remark}
\newtheorem*{remark*}{Remark}
\newcommand{\C}{\mathbb C}\newcommand{\R}{\mathbb R}\newcommand{\Z}{\mathbb Z}\newcommand{\Q}{\mathbb Q}
\newcommand{\A}{\mathbb A}\newcommand{\Gm}{\mathbb G_m}\newcommand{\OO}{\mathcal O}\newcommand{\m}{\mathfrak m}
\newcommand{\cW}{\mathcal W}\newcommand{\cV}{\mathcal V}\newcommand{\cR}{\mathcal R}\newcommand{\cB}{\mathcal B}\newcommand{\cD}{\mathcal D}\newcommand{\cU}{\mathcal U}
\newcommand{\RV}{\widehat{\mathcal R}_{\mathcal V}}\newcommand{\hR}{\widehat{\mathcal R}}
\newcommand{\cten}{\widehat\otimes}\newcommand{\reg}{\mathrm{reg}}\newcommand{\test}{\mathrm{test}}\newcommand{\Pois}{\mathrm{Pois}}
\newcommand{\qmax}{q_{\max}}
\newcommand{\HP}{\operatorname{HP}}\newcommand{\DP}{\operatorname{DP}}\newcommand{\ZP}{Z_{\mathrm{Pois}}}

\DeclareMathOperator{\Spec}{Spec}\DeclareMathOperator{\Hom}{Hom}
\DeclareMathOperator{\Der}{Der}\DeclareMathOperator{\Sym}{Sym}
\DeclareMathOperator{\im}{Im}\DeclareMathOperator{\id}{id}\DeclareMathOperator{\wt}{wt}\DeclareMathOperator{\gr}{gr}
\DeclareMathOperator{\Ann}{Ann}\DeclareMathOperator{\pr}{pr}\DeclareMathOperator{\rk}{rk}\DeclareMathOperator{\codim}{codim}\DeclareMathOperator{\depth}{depth}
\newcommand{\drel}{d_{\mathrm{rel}}}
\setlist[enumerate]{leftmargin=2em,itemsep=3pt,topsep=4pt}
\allowdisplaybreaks[2]\clubpenalty=10000\widowpenalty=10000\displaywidowpenalty=10000
\title{Symplectic singularity as hyperK\"ahler cone}
\author{Yuji Odaka}\date{\today}

\begin{document}

\maketitle
\begin{abstract}
We prove that any symplectic singularity has a structure of hyperK\"ahler cone. In particular, it is equivalent to the vertex of a conical symplectic variety, and it is automatically K-polystable.
\end{abstract}

\tableofcontents

\section{Introduction}\label{sec:intro}
Around two decades ago, Kaledin (partially) predicted that any symplectic singularity
(\cite{Beauville})
should be a conical symplectic singularity in the sense it ``locally admits'' $\C^*$-
action (cf., e.g., \cite{Kaledin00, Kaledin}). 

In a fairly different context of canonical K\"ahler
metrics, much later, Donaldson-Sun's theory of metric tangent cones for singular 
K\"ahler-Einstein metrics around more general log terminal singularities revealed its algebraic nature via the 
underlying two step process 
\begin{equation}\label{eq:conification}
(x\in X)\rightsquigarrow(0\in W)\rightsquigarrow(0\in C),
\end{equation}
of algebraic nature. Here $(0\in W)$ and $(0\in C)$ are 
pointed 
affine varieties with good action of 
canonical algebraic tori, with only fixed points at the vertex $0$. More precisely, they are a 
(K-semistable) log terminal cone and a (K-polystable) log terminal 
cone \cite[\S3]{DS} respectively, 
in the sense of 
\cite{CollinsSzek, CollinsSzek2}.
\footnote{In their works, these are called Fano cones. Indeed, 
affine cones for (pluri-)anticanonical line bundles over Fano varieties are fundamental examples, with regular Reeb vector fields.}
After these works and the volume-minimization 
phenomena in Sasakian geometry by Martelli-Sparks-Yau  
\cite{MSY1, MSY2}, 
C.Li introduced an algebraic framework for this 
construction which further developed; 
see \cite{LiNV,LiXu,LiWangXu,XZ} (or a survey \cite{LLX}). 

Our earlier paper \cite{NO} with Y.~Namikawa first found an unexpected link between these two stories for symplectic 
singularity $x\in X$, to identify 
the germs of $x\in X$, $0\in W$, $0\in C$, under mild hypotheses. 
Note that such identifications provide ``canonical" local algebraic torus action 
around the germ of $x\in X$ with geometric meanings. 
The proof combines algebraic geometry, analysis of hyperK\"ahler 
metrics and Poisson deformations. 
Later, Xu-Zhuang \cite{XZ} subsequently removed the hypothesis for the part of  
comparison between $X$ and $W$, generalizing \cite[\S 4]{NO}, 
depending on certain singular Hermitian metric 
constructed by Guenancia (in their appendix, after \cite{DGP}). 
The question addressed in this paper is the rigidity of the second arrow: 
proving $W\simeq C$ (Theorem \ref{thm:rigidity}), removing the hypothesis in 
\cite[\S5]{NO}. As a consequence, any symplectic singularity germ  is 
equivalent to K-polystable conical symplectic variety with hyperK\"ahler cone 
metric. 

For these new approaches, we first 
put both processes in \eqref{eq:conification} into degenerating
families; $X\rightsquigarrow W$ as isotrivial degeneration 
over an affine toric variety 
(\cite{LiXu}, \cite[\S 2]{Od24}, 
\cite[\S 2]{NO}) and 
$W\rightsquigarrow C$ in \cite[\S 3]{DS}
and \cite[Theorem 1.2]{LiWangXu}
as a test configuration
\begin{equation}\label{Wtc}
p\colon \mathcal{W}\to \mathbb{A}^1,
\end{equation}
with the general fiber $0\in W$ and central fiber $0\in C$.
Then, the main idea proposed in the program
\cite{NO} is that, one can analyze
these two families via
Poisson deformation theory (cf., e.g., \cite{NamikawaAffine})
as far as we can transplant algebraic symplectic structure of $X$ to
$W$ and $C$. 

\subsection*{Notations and setup}\label{sec:conventions}
In this paper, varieties are all assumed to be integral and of finite type over $\C$. We write $n=\dim X=\dim W=\dim C=2m$ for their common complex dimension. For a normal variety $Y$, put $\Omega_Y^{[p]}=j_*\Omega_{Y^{\reg}}^p$ where $j:Y^{\reg}\hookrightarrow Y$ is the open immersion from the regular locus $Y^{\reg}$ of $Y$. A symplectic variety carries a closed non-degenerate 2-form on its regular locus extending regularly to a resolution \cite{Beauville}. Its singularities are canonical and hence klt. A finite map is quasi-\'etale if it is \'etale outside a subset of the target of codimension at least two.

For an algebraic torus $T$, write $M$ for its character lattice. A Reeb vector $\xi\in\Hom(\Gm,T)\otimes\R$ is positive on every nonconstant function character. Its holomorphic Euler field is $E=Z_\xi$; thus $Ef=\langle\gamma,\xi\rangle f$ on a function of character $\gamma$. The real radial field is $R=E+\bar E$, and the real Reeb field is $IR$. A good integral action has only positive nonconstant weights and degree-zero ring $\C$.
Write $T_\xi\subset T$ for the algebraic torus generated by $\xi$; evaluation $\chi\mapsto\langle\chi,\xi\rangle$ is injective on its character lattice. The standard application uses $T=T_\xi$, while the hypotheses below allow a larger torus. Put $N=\Hom(\Gm,T)$ and $N_\R=N\otimes_\Z\R$. 

Now, we discuss our main setup. See \cite{NO} 
with its preparation \S 2 for the details. 
As a priori generalization of what appears as \eqref{Wtc},
let $p:\cW=\Spec\cR\to\A^1_t$ be normal and flat, with normal central fiber $C$, and a fixed punctured identification $\cW|_{t\ne0}\simeq W\times(\A^1_t\setminus0)$, and henceforth assume the following. (Our main 
applications, these are satisfied, as it is explained.) 
 
\begin{enumerate}[label=\textup{(H\arabic*)}]
\item\label{hyp:grading} A fiberwise torus $T$ fixes $t$. A commuting auxiliary 
test torus $G_{\test}\simeq\Gm$ 
acts on the base $\A^1_t$ 
of $\mathcal{W}$ with the weight $\wt(t)=d>0$ 
(often $d=1$; but we allow general $d$ for a technical reason, to lift the actions on 
a certain finite cover of $C$ later.) 
The same $\xi$ has positive weights on $\Gamma(W,\OO_W)$ and $\Gamma(C,\OO_C)$; $\cR^T=\C[t]$ and, 
for each $\gamma \in M$, the corresponding $T$-eigen sub $\C[t]$-module $\cR_\gamma$ is finite free $\C[t]$-module 
(cf., \cite[2.12, 2.13]{OdCompact} and 
\cite{CollinsSzek, CollinsSzek2}). 
\item\label{hyp:form} $W$ has a $T$-homogeneous symplectic form $\sigma_W$ of character $\tau$, where $\langle\tau,\xi\rangle=2$. In particular, 
it is a symplectic variety. 
\item\label{hyp:metric}
$C$ is a K-polystable log terminal cone of dimension $n=2m$,
normalized by $A_C(\xi)=2m$, with a weak Calabi-Yau cone
metric $g$ of Reeb vector $\xi$.
Writing $G_{\test,c}\simeq S^1$ for the maximal compact subgroup
of $G_{\test}$, we choose $g$ invariant under the induced action
of $T_c\times G_{\test,c}$ on $C$.
\end{enumerate}
Here $A_C(\xi)$ is the $\xi$-weight of a nowhere-vanishing pluricanonical frame of $\OO(\ell K_C)$ divided by its index $\ell$. As pointed out by \cite{LiNV}, 
it coincides with the log discrepancy for the associated 
valuation of $C$ to $\xi$. 
By a weak cone metric, we mean that there is a positive smooth radius $r$ on $U=C^{\reg}\setminus\{o\}$, homogeneous of degree one under the real dilations generated by $R$ and invariant under $T_c\times G_{\test,c}$. Its square extends to a nonnegative locally bounded plurisubharmonic function on $C$. We require a continuous proper reference radius $r_\xi:C\to\R_{\ge0}$ with the same homogeneity, positive off the vertex and zero there, and a constant $\kappa\ge1$ such that $\kappa^{-1}r_\xi\le r\le\kappa r_\xi$ on $U$. On $U$ the metric satisfies
\begin{equation}\label{eq:coneMA}
g=dr^2+r^2g_L,\qquad \omega=dd^c(r^2/2),\qquad \omega^n=c_0dV_{\mathrm{can}},\quad c_0>0.
\end{equation}
The last equality holds as bounded-potential Monge-Amp\`ere measures (cf.\ \cite{Bedford82}). On a log resolution $\pi:\widetilde C\to C$ the canonical measure has the expression
\begin{equation}\label{eq:canmeasure}
\pi^*dV_{\mathrm{can}}=b\prod_i|s_i|_{H_i}^{2a_i}dV_H,\qquad
K_{\widetilde C}=\pi^*K_C+\sum_i a_iD_i,\quad a_i>-1,
\end{equation} 
with $b>0$ smooth. 
(Although we work with general klt cones
in this section, the cone $C$ arising from a symplectic
singularity is canonical Gorenstein by
\cite[Corollary 4.3]{NO}. Thus, for our main application,
we have $a_i \ge 0$ for all $i$.) 
The metric is smooth and Ricci-flat on $C^{\reg}$.
Metric existence and regularity are established by 
\cite[Theorem 2.9]{LiSoliton, JubertPan} 
and \cite[Theorem 1.5]{Nghiem}; Lemma \ref{lem:radius-comparison} records 
the bounded relative potential and the comparison with $r_\xi$ in 
\cite[\S 4.1]{NO}. Applying the construction to the effective 
torus generated by $T$ and $G_{\test}$ gives the required compact-torus
invariance without changing the fiberwise action. 

\vspace{3mm}

For our application to symplectic singularities, 
we are in the above setup. Indeed, in addition to the 
Donaldson-Sun 
$2$-step degenerations $(x\in X)\rightsquigarrow 
W\rightsquigarrow C$ \eqref{eq:conification} 
(cf., e.g., 
\cite[\S 2]{NO} to review the details) 
\cite[\S 4]{NO}, \cite[6.2]{XZ} supply 
the symplectic form $\sigma_W$, 
our \S \ref{sec:intermediate} in this paper shows its 
homogeneity with weight $2$. 
These establish \ref{hyp:grading}-\ref{hyp:metric}. 

\begin{theorem}[Bochner principle]\label{thm:bochner}
Let $(C,\xi)$ be a K-polystable log terminal 
cone of complex 
dimension $n\ge 2$, normalized by $A_C(\xi)=n$, and let $g$
be a weak Calabi-Yau cone metric as in \ref{hyp:metric}.
Here the weak metric is understood in the sense specified above,
including the locally bounded psh extension of $r^2$,
the uniform comparison with the proper reference radius $r_\xi$,
and \eqref{eq:coneMA}.

For $1\le p\le n$, let $\alpha\in H^0(C,\Omega_C^{[p]})$
satisfy
\[
  \delta_s^*\alpha=s^p\alpha\qquad(s>0),
\]
where $\delta_s$ denotes the real dilation generated by $R$.
Then $\nabla^g\alpha=0$ on $C^{\reg}$.
\end{theorem}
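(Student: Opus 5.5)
We follow the classical Bochner argument: a weighted integration by parts on the cone, with the boundary terms controlled by homogeneity and by the weak-metric hypotheses.

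\emph{Step 1: reduction to a function.} Set $u=|\alpha|_g^2$ on $U=C^{\reg}\setminus\{o\}$. Since $\delta_s^*g=s^2g$ on $U$ and $\delta_s^*\alpha=s^p\alpha$, the norm of a $p$-form scales by $s^{-2p}$ under the metric change. Hence $u$ is $\delta_s$-invariant, i.e.\ homogeneous of degree $0$, and so it is determined by its restriction to the link $\{r=1\}\cap U$. The form $\alpha$ is holomorphic, and $g$ is Kähler and Ricci-flat on $C^{\reg}$. The Weitzenböck formula for holomorphic $p$-forms then reads
\[
\tfrac12\Delta_g u=|\nabla\alpha|^2+\langle \mathcal{R}\alpha,\alpha\rangle .
\]
On holomorphic $(p,0)$-forms the curvature term is given by the Ricci action, so it vanishes when $\mathrm{Ric}=0$. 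Therefore $\Delta_g u=2|\nabla\alpha|^2\ge0$ on $U$.

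\emph{Step 2: integration against a cutoff.} Both $u$ and $|\nabla\alpha|^2r^2$ are $\delta_s$-invariant, while the volume form $\omega^n$ scales with degree $2n$. We therefore integrate over the annulus $A=\{1\le r_\xi\le 2\}$, suitably weighted. Let $\chi(r_\xi)$ be a radial bump function. We also need a cutoff $\eta_\epsilon$ that equals $1$ away from $C^{\sing}$ and satisfies $\int|d\eta_\epsilon|^2\,\omega^n\to0$. Such cutoffs exist because $C^{\sing}$ has real codimension $\ge4$ and the metric has bounded potential: one uses Bedford--Taylor capacity, or the comparison $\kappa^{-1}r_\xi\le r\le\kappa r_\xi$ combined with the volume density \eqref{eq:canmeasure}. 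We then compute
\[
2\int \chi\eta_\epsilon^2|\nabla\alpha|^2\,\omega^n=\int\chi\eta_\epsilon^2\Delta u\,\omega^n=-\int\langle d(\chi\eta_\epsilon^2),du\rangle\,\omega^n .
\]
Because $u$ is degree-$0$ homogeneous, $Ru=0$, so the radial part of $d\chi$ pairs to zero with $du$. Here it is convenient to use $r$ in place of $r_\xi$ in $\chi$, which is legitimate since $r$ is smooth on $U$. The remaining term is bounded by $C\int|d\eta_\epsilon|\,|du|$. We estimate it by Cauchy--Schwarz, using $|du|\le 2|\alpha||\nabla\alpha|$ and absorbing a factor of $\int\eta_\epsilon^2|\nabla\alpha|^2$ on the left.

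\emph{Main obstacle: uniform integrability of $|\alpha|$ near $C^{\sing}$.} The absorption step requires $\int_{A}|\alpha|_g^2\,\omega^n<\infty$ with a weight good enough that $\int|d\eta_\epsilon|^2|\alpha|^2\to0$. For this we would use that $\alpha$ extends to a log resolution with at worst the pole allowed for reflexive forms on klt spaces; by Greb--Kebekus--Kovács--Peternell, reflexive forms on klt spaces extend regularly. The $L^2$ norm of a $p$-form against the Monge--Ampère measure is then controlled by $L^2$ integrability of $n$-forms, via the wedge with $\omega^{n-p}$ and the equality $\omega^n=c_0\,dV_{\mathrm{can}}$, which is finite since $a_i>-1$. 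The cutoffs are chosen as functions of $\log$ of a psh exhaustion of $C^{\sing}$, so that $|d\eta_\epsilon|$ is bounded in $L^2$ uniformly with a weight giving the limit $0$; this is the Donaldson--Sun / Cheeger--Colding style argument. With this in hand, letting $\epsilon\to0$ gives $\int\chi|\nabla\alpha|^2\,\omega^n=0$, and hence $\nabla^g\alpha=0$ on $U$. By continuity, or by homogeneity applied to every annulus, this holds on all of $C^{\reg}$, the vertex being either singular or handled trivially.
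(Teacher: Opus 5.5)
Your Weitzenb\"ock identity and the Caccioppoli-type absorption are fine on compact subsets of $C^{\reg}$. The step you flag as the main obstacle, however, is a genuine gap, and as formulated it does not close.

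After absorbing, you are left with $\int\chi\eta_\epsilon^2|\nabla\alpha|_g^2\,dV_g\le 4\int\chi\,|d\eta_\epsilon|_g^2|\alpha|_g^2\,dV_g$, so you need a \emph{weighted} capacity estimate. An unweighted bound $\int_A|\alpha|_g^2\,dV_g<\infty$ does hold. It comes from Chern--Levine--Nirenberg applied to $\pi^{[*]}\alpha\wedge\overline{\pi^{[*]}\alpha}\wedge(\pi^*\omega)^{n-p}$, not from $\omega^n=c_0dV_{\mathrm{can}}$. But it does not help, because $|d\eta_\epsilon|_g$ is necessarily unbounded where it is supported. What you would need is essentially $|\alpha|_g\in L^\infty$, and nothing gives that a priori. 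GKKP bounds the coefficients of $\pi^{[*]}\alpha$ on a resolution, but $|\alpha|_g$ also involves $g^{-1}$, and $\pi^*g$ may degenerate along the exceptional divisor $D$.

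The best available control is polynomial: $\pi^*g\ge cN_D^{M}H$, as in Lemma~\ref{lem:lower}, which already needs Nghiem's transverse estimates plus the Monge--Amp\`ere equation. This gives $|\alpha|_g^2\lesssim N_D^{-M'}$. Now take the cutoffs one can actually build from a bounded potential, $\eta=h(\rho/L)$ with $\rho=\log N_D^2$. The energy on $E_L=\{-2L<\rho<-L\}$ is only $o(L)$, while $|\alpha|_g^2$ may be of size $e^{CL}$ on $E_L$. So your error term is only bounded by $L^{-1}e^{CL}\cdot o(1)$. Appealing to Donaldson--Sun/Cheeger--Colding cutoffs does not rescue this. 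Here $g$ is only a weak bounded-potential solution as in \ref{hyp:metric}, not a Gromov--Hausdorff limit of smooth Einstein metrics. In any case, codimension-four cutoffs of that type would need more than $L^2$-integrability of $|\alpha|_g$.

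The missing idea is to run the argument with $F=\log(1+|\alpha|_g^2)$ instead of $u=|\alpha|_g^2$. The Bochner inequality survives as $-\Delta_dF\ge 2|\nabla'\alpha|_g^2/(1+u)^2\ge0$, and still $RF=0$. The polynomial blow-up of $u$ becomes the logarithmic bound $F\le C(1-\rho)$. Hence $F\lesssim L$ on $E_L$, and $F\in L^2$ on annuli by the klt condition $a_i>-1$. The paper then integrates by parts twice, putting all derivatives on $\psi(r)\chi_L$. It therefore needs only $\int F|\Delta_d\chi_L|\to0$ and $\int|\nabla\chi_L|^2\to0$. Both follow from $F\lesssim L$ on $E_L$ and the energy estimate $\int_{E_L}d\rho\wedge d^c\rho\wedge(\pi^*\omega)^{n-1}=o(L)$. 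That estimate rests on $[D]\wedge(\pi^*\omega)^{n-1}=0$, which holds because $D$ is exceptional. Fatou's lemma then gives $\nabla'\alpha=0$.

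So three ingredients are missing from your sketch: the logarithmic change of function, the polynomial lower bound on $\pi^*g$ near $D$, and the $o(L)$ energy estimate for logarithmic cutoffs.
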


The following theorem is our main theorem. Under  \ref{hyp:grading}-\ref{hyp:metric}, establishes 
the $2$nd-step rigidity predicted in \cite[\S5]{NO} (see also Conjecture 1.3 
in {\it loc.cit}, which is an even stronger statement, due to Theorem 
1.4 and its proof). 
\begin{theorem}[Rigidity of the second degeneration]\label{thm:rigidity}
Assume that $p:\cW\to\A^1_t$ satisfies
\ref{hyp:grading}-\ref{hyp:metric}.
Then there is a $T\times G_{\test}$-equivariant isomorphism 
\[
\cW\simeq C\times\A^1_t
\]
over $\A^1$, inducing the identity on the marked central fiber. On the product the test torus has its induced action on $C$ and weight $d$ on $t$. In particular $W\simeq C$ as $T$-varieties.
\end{theorem}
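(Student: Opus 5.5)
The plan is to carry the symplectic form across the test configuration, make it parallel with the Bochner principle, and then use Poisson deformation theory to show the family is trivial.

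\textbf{Step 1: a symplectic form on $C$.} Using \ref{hyp:grading} and \ref{hyp:form}, view $\sigma_W$ as a relative reflexive $2$-form on $\cW|_{t\ne0}\simeq W\times(\A^1_t\setminus 0)$. It has character $\tau$ and $G_{\test}$-weight zero. Work on $\cW$ with reflexive relative forms $\Omega^{[2]}_{\cW/\A^1}$. Since each eigenmodule $\cR_\gamma$ is finite free over $\C[t]$, the form $\sigma_W$ extends over $t=0$ after multiplying by a suitable power $t^{-k}$, which is homogeneous for $G_{\test}$. This produces a nonzero restriction $\sigma_0$ on $C$. The restriction is a reflexive $2$-form of $\xi$-weight $\langle\tau,\xi\rangle=2$, so $\delta_s^*\sigma_0=s^2\sigma_0$.

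\textbf{Step 2: $\sigma_0$ is parallel, hence symplectic.} Apply Theorem \ref{thm:bochner} with $p=2$ to get $\nabla^g\sigma_0=0$ on $C^{\reg}$. A parallel form has constant rank on the connected manifold $C^{\reg}$. We also need $\sigma_0^m\neq0$. For this, compare $\sigma_W^m$, which is a nowhere-vanishing section of $K_W$ of weight $2m=A_W(\xi)$, with its limit. The limit is a section of $K_C$ of $\xi$-weight $2m=A_C(\xi)$, and by the normalization in \ref{hyp:metric} the frame of $K_C$ has exactly this weight. Hence the limit is a nonzero multiple of the frame, provided the extension in Step 1 uses the same $k$ for $\sigma$ and for $\sigma^m$ (equivalently $k=0$). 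Alternatively, parallelism makes $|\sigma_0|_g$ constant and gives holonomy in $Sp(m)$. Either way $\sigma_0$ is nondegenerate, and $C$ becomes a conical symplectic variety whose form has weight $2$. The fibers of $\cW$ then carry a relative Poisson structure of weight $-2$, so $\cW\to\A^1$ is a $T$-equivariant Poisson deformation of $(C,\{\,,\})$.

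\textbf{Step 3: triviality of the deformation.} Namikawa's theory of Poisson deformations of conical symplectic varieties identifies the universal Poisson deformation with a family over $H^2(\widetilde C^{an},\C)$ or its quotient by the Weyl group. On this base the $\C^*$-action has positive weight equal to the weight of $\sigma$, namely $2$. Our family is classified by a $G_{\test}$-equivariant map $\A^1_t\to H^2/W$. The fiberwise torus $T$ preserves the Poisson structure only up to the character $\tau$, while $G_{\test}$ acts on $t$ with weight $d>0$ and fixes $\sigma$ (weight $0$). I would then argue that the classifying map must be $T\times G_{\test}$-equivariant, with $G_{\test}$ acting trivially on the base $H^2$ because it preserves $\sigma_0$. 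This forces the map to be constant, so the deformation is trivial: $\cW\simeq C\times\A^1$ equivariantly, restricting to the identity on the central fiber. If the classifying map exists only formally or étale-locally, I would upgrade it using the $\C[t]$-freeness of the weight spaces $\cR_\gamma$ together with graded Nakayama.

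\textbf{Main obstacle.} The delicate point is Step 1. The limit must be a symplectic form and not merely a degenerate $2$-form, which means controlling the power of $t$ so that $\sigma$ and $\sigma^m$ extend compatibly. This is where the weight normalization $A=2m$ and Bochner's theorem enter. A secondary point is justifying equivariance of the classifying map for $G_{\test}$, possibly after a finite base change, which is why general $d$ is allowed.
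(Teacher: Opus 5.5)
\textbf{The gap is in Step 2: the limit form need not be nondegenerate.} Nothing forces $\sigma_0$ to be nondegenerate, and in general it is not.

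Take $W=\C^4$ with $\sigma_W=dx_1\wedge dy_1+dx_2\wedge dy_2$ and $T=\Gm$ acting with weight one. Use the product test configuration $\cW=\C^4\times\A^1_t$, with $G_{\test}$ acting by $(z,t)\mapsto(\lambda(s)z,st)$, where $\lambda(s)=\mathrm{diag}(1,1,s,s)$. Through the punctured identification $(w,t)\mapsto(\lambda(t)w,t)$, $\sigma_W$ becomes $dx_1\wedge dy_1+t^{-2}dx_2\wedge dy_2$. The normalized limit is therefore $dx_2\wedge dy_2$. It has weight $2$, is parallel for the flat cone metric, and has rank $2$. All of \ref{hyp:grading}--\ref{hyp:metric} hold in this example.

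Your canonical-class argument cannot detect this. Since $T$ fixes $t$, $\xi$-weights carry no information about powers of $t$. In the elementary divisor form \eqref{eq:smith} one has $S^m=m!\,t^{\sum_j q_j}\cdot(\text{relative volume frame})$. So your proviso ``the same $k$ for $\sigma$ and $\sigma^m$'' is equivalent to $\qmax=0$, which is exactly what needed proof.

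The holonomy fallback also fails. A parallel $2$-form only has a parallel kernel. That reduces the holonomy and locally splits the cone; it does not put the holonomy in $Sp(m)$. Finally, $\sigma_0$ has $G_{\test}$-weight $ad$ rather than $0$ in general (here it is $2$). So the $G_{\test}$-invariance of $\sigma_0$ used in Step 3 is unjustified.

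\textbf{Why this breaks Step 3, and what the paper does instead.} The example is a trivial family, so the theorem is not contradicted. But a proof must handle $\qmax>0$, and there your Step 3 no longer applies. The only regular relative bracket available is $\Pi=t^{\qmax}S^{-1}$. It restricts on $C$ to the degenerate bracket $\sigma_k^{-1}\oplus0$ (Proposition \ref{prop:regularPi}). So $\cW$ is not a Poisson deformation of a conical symplectic variety, and Namikawa's universal base is unavailable. Remark \ref{remark:productcounterexample} shows that deformations of such degenerate brackets can be genuinely non-split.

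The paper supplies the missing idea in several steps.
\begin{enumerate}
\item The flag $K_r$, together with iterated use of Theorem \ref{thm:bochner}, gives a parallel splitting $T_{C^{\reg}}=\bigoplus_iF_i$ and a symplectic form $\sigma_C=\sum_i\sigma_i$ (Proposition \ref{prop:blocks}).
\item The $F_i$ are algebraically integrable (Lemma \ref{lem:leaves}). This yields a quasi-\'etale product cover $V=\prod_iV_i\to C$ (Theorem \ref{thm:cover}), which is extended over the formal family (Lemma \ref{lem:extendcover}).
\item Theorem \ref{thm:SPD} is applied recursively to $(A_j\otimes A_{\le j-1},\pi_j+0)$. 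Lemma \ref{lem:casimir} kills the parameter values at each stage.
\item The resulting formal product is descended by $\Gamma$.
\end{enumerate}

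Your Step 3 is correct precisely when $\qmax=0$, which is the paper's case $k=1$. There $T$ acts trivially on $t$, while the coordinates in Theorem \ref{prop:namikawa} have positive $\xi$-weights, so $T$-equivariance forces the classifying map to vanish. Your graded-Nakayama algebraization also matches the paper's final step.
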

\begin{corollary}[Symplectic singularity as hyperK\"ahler cone]\label{cor:hk}
For any symplectic singularity $x\in X$, the associated
K-polystable affine
cone $0\in C$ is a conical symplectic variety with the same analytic 
germ as $x\in X$. 
The metric $g$ admits parallel complex structures $I,J,K$ making
$C^{\reg}$ a hyperK\"ahler cone. 
In particular, for any symplectic singularity germ, 
it admits a hyperK\"ahler metric on its smooth locus. 
\end{corollary}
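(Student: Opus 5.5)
The plan is to combine the two preceding theorems with the Poisson deformation argument sketched in \cite{NO}.

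First, C is a conical symplectic variety with the same germ as $x\in X$. By \cite[\S4]{NO} and \cite[6.2]{XZ}, together with \S\ref{sec:intermediate}, the first degeneration $X\rightsquigarrow W$ identifies the analytic germ of $x\in X$ with that of $0\in W$. It also equips $W$ with a $T$-homogeneous symplectic form $\sigma_W$ of $\xi$-weight $2$. Hence \ref{hyp:grading}--\ref{hyp:metric} hold for the test configuration \eqref{Wtc}. Theorem \ref{thm:rigidity} then gives $W\simeq C$ as $T$-varieties. So $C$ carries a homogeneous symplectic form $\sigma_C$ with $\langle\tau,\xi\rangle=2$, obtained by transporting $\sigma_W$. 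Since $\xi$ has positive weights with degree-zero ring $\C$, $C$ is a conical symplectic variety whose vertex has the germ of $x\in X$.

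Second, $\sigma_C$ is parallel. The Reeb field $\xi$ generates the real dilation $\delta_s$ through $R=E+\bar E$, and $\sigma_C$ is $T$-homogeneous of $\xi$-weight $2$. I will check that this gives $\delta_s^*\sigma_C=s^2\sigma_C$. The point is that $\delta_s$ is the image of $s$ in the real part of the complexified $\xi$-flow, and a form of $\xi$-character weight $2$ scales by $s^2$. Theorem \ref{thm:bochner} with $p=2$ (note $n=2m\ge2$) then gives $\nabla^g\sigma_C=0$ on $C^{\reg}$. Write $\sigma_C=\omega_J+i\omega_K$ for real $2$-forms $\omega_J,\omega_K$; these are parallel.

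Third, build $J$ and $K$. Define endomorphisms by $\omega_J=g(J\cdot,\cdot)$ and $\omega_K=g(K\cdot,\cdot)$; they are parallel because $g$ and the forms are. The algebraic fact needed is that $|\sigma_C|_g$ is constant. Parallelism already forces this, so I can rescale $\sigma_C$ by a positive constant. After rescaling, the Monge--Amp\`ere equation \eqref{eq:coneMA} should match $\omega^{2m}$ with $c\,(\sigma_C\wedge\bar\sigma_C)^m$: on a Gorenstein canonical cone, $dV_{\mathrm{can}}$ is given by $\sigma_C^m$.

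It remains to show $J^2=-1$ and $IJ=-JI=K$. This is pointwise linear algebra. At a point, a $(2,0)$-form that is nondegenerate with norm chosen against the Hermitian metric $g$ and parallel makes the holonomy sit in $Sp(m)$. Concretely, the parallel tensor $J$ is $I$-antilinear because $\sigma_C$ has type $(2,0)$. Then $J^2$ is a parallel, $I$-linear, $g$-symmetric, negative definite endomorphism. The holonomy of a Ricci-flat cone over a connected link is irreducible, or else one decomposes. Either way, on each factor $J^2$ is a negative constant, which the normalization makes $-1$.

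The main obstacle I expect is exactly this last step: making $J^2=-\mathrm{id}$ globally rather than just ensuring $J^2$ is parallel. If $C^{\reg}$ has reducible holonomy, $J^2$ could a priori have several eigenvalues. I would try to handle this using the weight-$2$ homogeneity. Both $\sigma_C$ and $\omega=dd^c(r^2/2)$ are homogeneous of weight $2$. Together with the volume identity $\omega^{2m}=c\,\sigma_C^m\wedge\bar\sigma_C^m$, which holds on a connected dense set, this pins the eigenvalues of $-J^2$. Their product is fixed, and parallelism plus the cone structure $g=dr^2+r^2g_L$ forces the radial direction $R$ and its image $JR$ to see the same eigenvalue. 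Connectedness of $C^{\reg}$, which holds by normality, then propagates that single eigenvalue to all of $C^{\reg}$. The final assertion of the corollary follows by restricting to the smooth locus of the germ.
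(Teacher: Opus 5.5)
There is a genuine gap in your last step. The parallel, $I$-linear, negative definite endomorphism $J^2$ has constant eigenvalues, but it may have several, and nothing you list rules this out. The volume identity only fixes their product. The claim that the cone structure forces $R$ and $JR$ to ``see the same eigenvalue'' is false, since $R$ generally has nonzero components in several eigenbundles of $J^2$.

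Here is a concrete counterexample. On flat $\C^4$ with the standard Euler field, take $\sigma=dz_1\wedge dz_2+2\,dz_3\wedge dz_4$. It is parallel, homogeneous of weight $2$, and satisfies $\omega^4=c\,\sigma^2\wedge\bar\sigma^2$. Yet the eigenvalues of $-J^2$ are in ratio $1:4$, and no constant rescaling gives $J^2=-\mathrm{id}$.

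So you should not require the hyperK\"ahler $J$ to come from $\sigma_C$ itself. The missing standard step is a renormalization. Let $E_\lambda$ be the eigenbundles of $-J^2$; they are parallel and $I$-invariant. Since $J$ commutes with $J^2$, it preserves each $E_\lambda$, and these eigenbundles are $g$-orthogonal; together with $\mathrm{Im}\,\sigma_C(v,w)=-\mathrm{Re}\,\sigma_C(Iv,w)$ this gives $\sigma_C(E_\lambda,E_\mu)=0$ for $\lambda\ne\mu$. Now set $\sigma':=\sum_\lambda\lambda^{-1/2}\sigma_C|_{E_\lambda}$. This is again a parallel $(2,0)$-form, hence holomorphic and symplectic. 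Its associated $J'$ satisfies $J'^2=-\mathrm{id}$, and you can take $K':=IJ'$.

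Equivalently, the holonomy lies in the $U(2m)$-stabilizer of $\sigma_C$ at a point, which is $\prod_\lambda Sp(m_\lambda)\subset Sp(m)$. This is the fact the paper implicitly invokes at the end of Proposition \ref{prop:blocks}.

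Apart from this, your route to the parallel form differs from the paper's but is sound. The paper gets $\sigma_C=\sum_i\sigma_i$ from Proposition \ref{prop:blocks}, built from the leading pairings of the degenerating form before rigidity. It uses Theorem \ref{thm:rigidity} only to get $W\simeq C$. You instead transport $\sigma_W$ through $W\simeq C$ and apply Theorem \ref{thm:bochner} with $p=2$. This works, since $L_R\sigma_C=L_E\sigma_C=2\sigma_C$ for a holomorphic form of character $\tau$. It is simpler once rigidity is known, though rigidity itself depends on Proposition \ref{prop:blocks}.

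There are two smaller omissions. First, \cite[Theorem 6.2]{XZ} gives $\widehat{\OO}_{X,x}\simeq\widehat{\OO}_{W,0}$, and passing to analytic germs needs Artin approximation, as in the paper. Second, if $T$ has rank $>1$, calling $C$ conical requires a rational one-parameter subgroup near $\xi$ on which $\tau$ is positive, as in the proof of Theorem \ref{thm:cover}.
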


This generalizes our original \cite[Theorem 1.1, 1.4]{NO}, 
which proved the same under mild hypotheses. 
In {\it loc.cit} \S 6.4 (Theorem 6.16), 
we proved under additional 
hypotheses that the dilating $\mathbb{R}_{>0}$-action 
extends 
to a {\it non-holomorphic} action of 
$\mathbb H^*$ on the regular locus for the quarternion $\mathbb{H}$. In particular,
the Reeb flow is periodic, so the cone is quasi-regular
and $T_\xi\simeq\mathbb C^*$. 
In {\it loc.cit}, we also 
conjectured that the same holds for any symplectic 
singularity 
(\cite[Conjecture 4.11]{NO}). 
Note that their quotients 
are then {\it possibly singular} log K-polystable (singular K\"ahler-Einstein) contact 
log Fano pairs as also explained in {\it loc.cit} 
(cf., \cite[Remark 1.6, \S 6.4]{NO}). 

\subsection{The main technical ingredients}\label{sec:inputs}
Our paper has three components of different natures. 
The first is on Theorem \ref{thm:bochner} and the other two are on 
Theorem \ref{thm:rigidity}. 

\begin{enumerate}
\item Analytic preparation (Theorem \ref{thm:bochner}): a Bochner type argument on {\it singular} $C$ with 
appropriate cutoffs proves Theorem \ref{thm:bochner} 
(when $0\in C$ is isolated singularity, it is straightforward.) 

\item Algebraic (non-geometric) preparation: use Harrison-Poisson cohomology, which captures 
Poisson deformations, and its 
K\"unneth type formulas (only degree $2, 3$ case are needed here) of
\cite[Lemma A.5]{GK}. Extending the main observation 
of the triviality of scale up Poisson deformations in 
\cite[\S 3]{NO}, using similar ideas, we show the formal rigidity of 
certain product type Poisson algebras. This rigidity applies to 
somewhat more general Poisson geometric situation than our conical symplectic 
varieties. 

\item Using two preparations of different natures as above, 
we show that, a finite quasi \'etale Galois covering $V$ of 
$C$ has a product structure. Then, using the above result in general Poisson 
deformation, we show formal Poisson rigidity of $V$ and $C$. This leads to 
the triviality of $\mathcal{W}$, proving Theorem \ref{thm:rigidity}. 

\end{enumerate}

\subsection{Organization of the proof}\label{sec:organization}
Our main theorem is a rigidity statement (Theorem \ref{thm:rigidity}), which applies to any family satisfying
\ref{hyp:grading}-\ref{hyp:metric}, not restricted to those 
arises from applying Donaldson-Sun theory to a symplectic 
singularity germ. Corollary \ref{cor:hk} simply follows from Theorem \ref{thm:rigidity}. 
Sections \S \ref{sec:analysis} and \S \ref{sec:PD} are 
preparatory in 
nature; Section \S \ref{sec:blocks} contains the core 
argument for Theorem \ref{thm:rigidity}. 
These two sections are independent 
analytic and algebraic preparations, respectively. 

Section \S \ref{sec:analysis}  begins by 
recalling the basics of the weak Calabi-Yau metrics on $C$. 
Then, after a basic analysis on reference radius functions in
\S \ref{sec:reference-radius}, for the fixed Reeb vector, we 
prove a Bochner principle type result for $C$ (Theorem 
\ref{thm:bochner}). Note that it a priori can have non-isolated 
singularities; so that we need a delicate analysis on 
cut-off functions. 

On the other hand, \S \ref{sec:PD} stufies Poisson 
deformations of products. Firstly, in \S 
\ref{sec:PD-cohomology}-\ref{sec:PD-kunneth}, we recall Poisson 
cohomology and the K\"unneth type formula from \cite{GK} 
and also put some additional explanations. 

The core \S \ref{sec:blocks} relies on both \S \ref{sec:analysis} and \S \ref{sec:PD}. The first preliminary
\S \ref{sec:intermediate} verifies the hypothesis for the weight \ref{hyp:form} for the application to 
symplectic singularities; 
all subsequent constructions only assume and use 
the stated hypotheses. \S \ref{sec:flag}-\ref{sec:cover} 
split $T_{C^{\rm reg}}$ into foliations by 
applying the elementary divisor theory to $\mathcal{W}$ and 
construct finite Galois quasi-\'etale 
cover of $C$ as a product of 
conical symplectic varieties. 
Sections \ref{sec:extendcover}-\ref{sec:tensors} continue with this same
cover, extending it over the formal family and proving Casimir vanishing. 
Only then, in \S \ref{sec:recursive}, is the splitting theorem of 
\S \ref{sec:PD} applied to prove the desired rigidity 
in a somewhat formal weaker sense. 
\S \ref{sec:conclusion} remains a separate final step: it  
descends 
and algebraizes the formal product, then show the same for 
the original germ. 

The following flowchart gives a quick overview of our paper. The middle column follows the core argument, the side 
columns supply the preparatory sections. 

\begin{center}
\begin{tikzpicture}[
  flowbox/.style={draw, rounded corners=1pt, line width=.4pt,
    align=center, text width=4.3cm, minimum height=1.05cm,
    inner xsep=2mm, inner ysep=1.4mm, font=\small},
  inputbox/.style={flowbox,dashed},
  flowhead/.style={align=center,text width=4.3cm,font=\small\bfseries},
  flowarrow/.style={-{Latex[length=1.7mm,width=1.2mm]},line width=.45pt}
]
\node[flowhead] at (0,1.1) {\S\ref{sec:analysis}\\Analytic preparation};
\node[flowhead] at (5.35,1.1) {\S\ref{sec:blocks}\\Core argument};
\node[flowhead] at (10.7,1.1) {\S\ref{sec:PD}\\Algebraic preparation};

\node[inputbox] (radius) at (0,0)
  {\S\ref{sec:reference-radius}\\Comparing radius functions};
\node[inputbox] (form) at (5.35,0)
  {\S\ref{sec:intermediate}\\ Weight of the symplectic form is $2$};
\node[flowbox] (cochains) at (10.7,0)
  {\S\ref{sec:PD-cohomology}-\ref{sec:PD-kunneth}\\K\"unneth in degrees $2,3$};

\node[flowbox] (cutoffs) at (0,-1.6)
  {\S\ref{sec:lower}-\ref{sec:cutoffs}\\Metric and cutoff estimates};
\node[flowbox] (blocks) at (5.35,-1.6)
  {\S\ref{sec:flag}\\Parallel foliations of $C$};
\node[flowbox] (bases) at (10.7,-1.6)
  {\S\ref{sec:PDreview}\\Poisson deformation bases};

\node[flowbox] (bochner) at (0,-3.2)
  {\S\ref{sec:bochner-cutoff}\\Bochner principle};
\node[flowbox] (leaves) at (5.35,-3.2)
  {\S\ref{sec:leaves}\\Algebraic leaves of $C$};
\node[flowbox] (splitting) at (10.7,-3.2)
  {\S\ref{sec:SPD}\\Split product deformation};

\node[flowbox] (cover) at (5.35,-4.8)
  {\S\ref{sec:cover}\\Finite cover $V=\prod_iV_i\to C$};
\node[flowbox] (lift) at (5.35,-6.4)
  {\S\ref{sec:extendcover}\\Deform the finite cover $V$};
\node[flowbox] (casimir) at (5.35,-8.0)
  {\S\ref{sec:tensors}\\Casimir vanishing};
\node[flowbox] (formal) at (5.35,-9.6)
  {\S\ref{sec:recursive}\\Formal rigidity};
\node[flowbox] (conclusion) at (10.7,-9.6)
  {\S\ref{sec:conclusion}: $W\simeq C$\\Descent; algebraization};

\draw[flowarrow] (radius) -- (cutoffs);
\draw[flowarrow] (cutoffs) -- (bochner);
\draw[flowarrow] (bochner.east) -- (2.675,-3.2)
  -- (2.675,-1.6) -- (blocks.west);
\draw[flowarrow] (form) -- (blocks);
\draw[flowarrow] (blocks) -- (leaves);
\draw[flowarrow] (leaves) -- (cover);
\draw[flowarrow] (cover) -- (lift);
\draw[flowarrow] (lift) -- (casimir); 
\draw[flowarrow] (casimir) -- (formal);
\draw[flowarrow] (cochains.east) -- (13.35,0)
  -- (13.35,-3.2) -- (splitting.east);
\draw[flowarrow] (bases) -- (splitting);
\draw[line width=.45pt] (splitting.south) -- (10.7,-8.8)
  -- (5.35,-8.8);
\fill (5.35,-8.8) circle[radius=.8pt];
\draw[flowarrow] (formal) -- (conclusion);
\end{tikzpicture}
\end{center}

\subsection*{Acknowledgements and use of AI}
The origin of this work i.e., these 
canonical torus actions on general symplectic singularities, 
refining the prediction by Kaledin, 
is our joint work with Yoshinori Namikawa 
from May 2024 which culminated in the joint paper 
\cite{NO} in March 2025. 
In {\it op.cit}, we 
essentially proved our main Theorem \ref{thm:rigidity}, 
Corollary \ref{cor:hk} and more, under some 
technical conditions. For this, I am truly grateful to
Y.Namikawa for the discussions and supports.
Indeed, the basic idea of using these Donaldson-Sun $2$-step degenerations for 
hyperK\"ahler metrics and relate to Poisson deformation theory, to prove rigidity
are already in that joint work. These ideas started with some 
fairly simple 
(human) observations on 
deformations of the (fairly classical)  
Kleinian singularities and differential geometric 
considerations on them. 

In the summer of 2026, I began this project as an experiment 
to test and understand 
various abilities and limitations 
of large language models then. 
I asked 
ChatGPT Astra, Claude Fable and their agents services 
if ``they" could remove the technical 
hypotheses in \cite{NO} 
which we unfortunately needed to assume for technical reasons then 
in \cite{NO} 
(mainly smoothability and globalizability assumptions). 

Firstly none of the AI models were able to give immediate answers. 
After substantial ``discussions" with those LLM, which reflects 
some suggestions from my side we obtained through the joint work, 
by mid-September 
2026, 
those LLM started to claim the completion of this technical 
proof. 

From there was a longer step: I then spent 
a month and a half or more, working with this proof to verify and understand: to first 
understand related background materials, read and verify 
the details of all the arguments, 
simplify/expand them, 
which partly relies on those LLM. Then the author 
largely edited the paper reflecting my own preference. 
Thus, most of the process of making this paper substantially 
relies 
on them, though I of course take responsibility. 

At the same time, I also benefitted from various 
discussions with V.Apostolov, Y.Hada, 
M.Hattori, E.Legendre, Y.Namikawa, C.Spotti and S.Sun to who I 
thank warmly. Although this experience has shown 
me the remarkable technical capabilities of 
LLMs, it did not affect my beliefs and 
thoughts on the invaluability of our 
human mathematical activities. 

During this work, the author was partially supported by JSPS 
KAKENHI Grant JP23K25766
(Grant-in-Aid for Scientific Research (B)), JP23KK0249 (Fund for 
the Promotion 
of Joint International Research (Fostering Joint International 
Research))
and JP25H00586 (Grant-in-Aid for Scientific Research (A)). 

\section{Analysis on the Calabi-Yau cone \texorpdfstring{$C$}{C}}\label{sec:analysis}
This section provides the analytic preparation for the core argument in
\S \ref{sec:blocks}; it does not use $W$, $\sigma_W$, or the
degeneration $\cW$. \S \ref{sec:reference-radius} first records the
metric inputs for the original, possibly irrational, Reeb vector $\xi$.
The metric radius is differentiated only on the regular locus; a
reference radius and a comparison inequality provide all compactness
needed near the singular set. The remaining subsections work in the
cone-metric setting of \ref{hyp:metric}, with 
the Reeb and torus conventions in our Introduction section, and
prove Theorem \ref{thm:bochner}. We first state the required cutoff
condition, then establish it using a polynomial metric estimate and a
local energy estimate along the exceptional divisor.

When $C$ has an isolated singularity at the vertex,
the Bochner argument is straightforward: the metric annuli
are compact subsets of the smooth locus, so ordinary
integration by parts applies without any cutoff arguments. 
The main purpose of this section is to extend 
this argument to the non-isolated case. 
Throughout this section, $d^c=\frac{i}{2}(\bar\partial-\partial)$, so
$dd^c=i\partial\bar\partial$. 

\subsection{Radius type functions}
\label{sec:reference-radius}
We use the existence and regularity theorems only through the following
properties. The metric is smooth on $U=C^{\reg}\setminus\{o\}$,
its potential is locally bounded and psh on $C$, and its radius is
uniformly comparable to an explicit reference radius. The comparison
allows us to work with ordinary compact sets without studying the
values of the metric radius on the singular set.

Fix an affine embedding $C\hookrightarrow\C^M$ by eigenfunctions of
the prescribed torus. For a Reeb vector $\zeta\in N_{\mathbb{R}}$, 
let $r_\zeta$ denote 
the ambient reference distance from the origin in
\cite[\S4.3, Notation 3]{NO}, restricted to $C$.
It is continuous, proper, positive off the vertex, and homogeneous
of degree one under the $\zeta$-dilations. We use $r_\xi$ for the
original Reeb vector and $r_\chi$ for an auxiliary quasi-regular element 
$\chi$. 

\begin{lemma}[Relative boundedness of the radius]\label{lem:radius-comparison}
Let $(C,\xi)$ be a K-polystable klt cone of dimension $n\ge2$.
The weak Ricci-flat cone metric constructed in
\cite[Theorem 2.9]{LiSoliton} has a nonnegative locally bounded psh
potential $r^2$ on $C$. By Nghiem's regularity theorem
\cite[Theorem 1.5]{Nghiem}, this potential is smooth on
$U=C^{\reg}\setminus\{o\}$; in particular, $r$ is a smooth positive
radius there, and the associated metric $g$ is smooth and Ricci-flat.
For the reference radius $r_\xi$ above, there is a
constant $\kappa\ge1$ such that
\begin{equation}\label{eq:radius-comparison}
 \kappa^{-1}r_\xi(x)\le r(x)\le\kappa r_\xi(x)\qquad(x\in U).
\end{equation}
\end{lemma}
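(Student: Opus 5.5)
The existence, local boundedness and plurisubharmonicity of the potential come directly from \cite[Theorem 2.9]{LiSoliton}. There the weak Ricci-flat cone metric on a K-polystable klt cone $(C,\xi)$ is produced as $\omega=dd^c\phi$, with $\phi=r^2/2$ a bounded-relative-to-reference psh function. Concretely, $\phi=\phi_0e^{u}$ where $\phi_0=r_0^2/2$ is a smooth transversally K\"ahler reference potential of Reeb vector $\xi$ (obtained from the ambient embedding), and $u$ is a bounded, $\xi$-invariant, $R$-homogeneous of degree zero function solving the transversal Monge--Amp\`ere equation. Smoothness of $\phi$ on $U=C^{\reg}\setminus\{o\}$ I will take from Nghiem's regularity theorem \cite[Theorem 1.5]{Nghiem}. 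Granting it, $r=\sqrt{2\phi}$ is smooth and positive on $U$, and $\omega^n=c_0dV_{\mathrm{can}}$ with $dV_{\mathrm{can}}$ smooth on $C^{\reg}$ gives Ricci-flatness there.

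For the comparison \eqref{eq:radius-comparison} I will work entirely with degree-one homogeneous functions and reduce to a compact link. Both $r$ and $r_\xi$ are homogeneous of degree one under the dilations generated by $R$. The ratio $r/r_\xi$ is therefore invariant under $\delta_s$, and it suffices to bound it above and below on the link $L=\{r_\xi=1\}\cap U$. The set $\{r_\xi=1\}$ is compact, since $r_\xi$ is continuous and proper, and it avoids the vertex.

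The bounds come from the structure of the potential in \cite{LiSoliton}. One first checks that the reference potential satisfies $r_0\asymp r_\xi$. Both are continuous, positive off the vertex, and degree-one homogeneous on $C$, so their ratio is a positive continuous function on the compact set $\{r_\xi=1\}$, hence bounded above and below. One then writes $r^2=r_0^2e^{u}$ with $\|u\|_{L^\infty}<\infty$, which gives
\[
e^{-\|u\|_\infty/2}r_0\le r\le e^{\|u\|_\infty/2}r_0 \quad\text{on } U .
\]
Combining the two comparisons yields $\kappa$.

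The main obstacle is the $L^\infty$ bound on $u$ near the singular locus of the link. This bound is not a consequence of continuity, since $r$ is only known to be smooth on $U$. It must come from the pluripotential construction: the Kolodziej-type $L^\infty$ estimate for the transversal equation with $L^p$ density, $p>1$, which holds because $C$ is klt, so $a_i>-1$ in \eqref{eq:canmeasure}. This is the content recorded in \cite[\S4.1]{NO}, which I will cite for the fact that the normalization $u$ is globally bounded rather than merely locally bounded on $U$. Once that is in place, the rest is the homogeneity reduction above.
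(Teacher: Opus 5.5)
\textbf{There is a genuine gap in the one nontrivial step.} Your homogeneity reduction to a compact link and the comparison $r_0\asymp r_\xi$ are fine, but that step is not proved. For a smooth $\xi$-homogeneous reference radius $r_0$, global boundedness of $u=\log(r^2/r_0^2)$ on $U$ is \emph{equivalent} to \eqref{eq:radius-comparison}. Indeed, the paper deduces exactly this boundedness \emph{from} the lemma, in the proof of Lemma~\ref{lem:lower}. So writing ``$r^2=r_0^2e^u$ with $\|u\|_\infty<\infty$'' assumes the conclusion.

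The justification you sketch also does not work as stated. \cite[Theorem~2.9]{LiSoliton} does not produce the metric as $\phi_0e^u$ with a bounded $\xi$-transversal potential $u$. A Ko\l{}odziej-type $L^\infty$ estimate ``for the transversal equation with $L^p$ density'' has no direct meaning when $\xi$ is irrational: the transversal equation lives on a non-Hausdorff leaf space. Pluripotential $L^\infty$ theory, with the klt condition supplying the $L^p$ density, only runs after descending to a compact log Fano quotient, and such a quotient exists only for quasi-regular Reeb vectors. If you instead mean simply to cite \cite[\S4.1]{NO} for the bound, then your proposal is a citation of the statement rather than a proof. The point of the paper's argument (see the remark following the lemma) is to derive the comparison from a known boundedness result.

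\textbf{Missing idea: the quasi-regular reduction.} Choose a quasi-regular $\chi$ in the same torus with $A_C(\chi)=n$, and take the quotient $\pi_\chi\colon C\setminus\{o\}\to(X_\chi,D_\chi)$, a log Fano pair. By Li's correspondence, the $\xi$-Ricci-flat cone metric corresponds to a bounded solution $u$ of the weighted equation $g(\mu_u)\omega_u^{n-1}=ce^{-u}\Omega_0$ on $(X_\chi,D_\chi)$. Here $g(y)=(n+\langle y,v\rangle)^{-(n+1)}$ and $v$ is the image of $\xi-\chi$. The boundedness of $u$ is \cite[Corollary~31]{HanLi}.

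You then need Li's level-preserving radius correspondence $\pi_\chi^*u=-2n\log\rho_\chi$ on $S=\{r=1\}\cap U$, where $\rho_\chi$ is the $\chi$-radius induced by the reference metric $h_0$. This correspondence is absent from your proposal, and it is what converts boundedness of a potential on the quotient into control of the $\xi$-metric sphere. It places $S$ in a compact annulus $\{a\le r_\chi\le b\}$ avoiding the vertex. On that annulus $r_\xi$ is bounded above and below, and $\xi$-dilation invariance of $r/r_\xi$ then gives \eqref{eq:radius-comparison} on all of $U$.
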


\begin{remark*}[Continuity of the radius]
As Nghiem writes \cite[\S3.6, p. 34]{Nghiem}, 
one naturally expects the metric radius to be continuous across the
singular locus. 
If $r$ extended continuously to $C$, remaining positive away from the
vertex, then \eqref{eq:radius-comparison} would follow immediately
from the positive minimum and finite maximum of $r/r_\xi$ on the compact
set $\{r_\xi=1\}$ and the common homogeneity.
The argument below obtains this comparison from the known boundedness of 
potential for weighted solitons, without requiring such continuity.
\end{remark*}

\begin{proof}
Choose a quasi-regular Reeb vector $\chi$ in the same torus,
normalized by $A_C(\chi)=n$, and let
\[
 \pi_\chi:C\setminus\{o\}\longrightarrow X_\chi
\]
be the quotient map by the associated $\C^*$-action.
The quotient is a log Fano pair $(X_\chi,D_\chi)$ as is well-known. 
Write $X_\chi^\circ=\pi_\chi(U)$. 

We use the weighted soliton construction whose underlying
geometric correspondence was developed by Apostolov and
Calderbank \cite[Theorem 1]{AC}, with the Sasaki-Einstein
specialization given by Apostolov, Jubert and Lahdili
\cite[Proposition 2 and Lemma 2.6]{AJL};
see also \cite[Remark 2.3]{LiSoliton}. 
By \cite[Theorem 2.9 and its proof, pp. 34-35]{LiSoliton},
the log Fano quotient $(X_\chi,D_\chi)$ admits a bounded
potential $u$ satisfying
\[
 g(\mu_u)\omega_u^{n-1}=c e^{-u}\Omega_0,
 \qquad
 \omega_u=\omega_0+dd^cu,
 \qquad
 g(y)=(n+\langle y,v\rangle)^{-(n+1)},
\]
where $c>0$.
Here $\omega_0$ and $\Omega_0$ are the curvature form
and log volume measure of a torus-invariant reference
Hermitian metric $h_0$ on $A:=-(K_{X_\chi}+D_\chi)$.
The moment map $\mu_u$ is for the compact torus in
$T/\C_\chi^*$, with constants fixed by the natural lift
to $A$, and $v$ is the image of $\xi-\chi$ in its Lie algebra.

Let $\rho_\chi$ be the reference $\chi$-radius induced by $h_0$.
Both $\rho_\chi$ and $r_\chi$ are positive continuous functions away
from the vertex, with the same $\chi$-homogeneity. Their ratio is
therefore bounded above and below on the compact set $\{r_\chi=1\}$,
and hence on $C\setminus\{o\}$.
The level-preserving radius correspondence in
\cite[\S2.2, (79)-(83), and \S2.4, (100)-(105)]{LiSoliton}
gives
\begin{equation}\label{eqn:2.1intermediate}
 \pi_\chi^*u=-2n\log\rho_\chi
 \quad\hbox{on }S:=\{q\in U\mid r(q)=1\},
 \qquad
 \pi_\chi(S)=X_\chi^\circ.
\end{equation}
The boundedness of $u$ in \cite[Corollary 31]{HanLi}
(cf.\ also \cite{EGZ, BBEGZ} and \cite[p. 35]{LiSoliton}) thus bounds
$\rho_\chi$, and hence $r_\chi$, above and away from zero on $S$.
Consequently, $S$ lies in an annulus $\{a\le r_\chi\le b\}$ that is
compact and avoids the vertex. The continuous positive function
$r_\xi$ has positive lower and finite upper bounds there.
Since $r$ and $r_\xi$ have the same $\xi$-homogeneity, their ratio is
invariant under the $\xi$-dilations; the bounds on $S=\{r=1\}\cap U$
therefore give \eqref{eq:radius-comparison} on $U$.

Finally, \cite[Theorem 2.9 and its proof]{LiSoliton} supplies the
locally bounded psh extension of $r^2$ and the Ricci-flat cone equation,
while \cite[Theorem 1.5]{Nghiem} gives smoothness of $r^2$ on $U$.
Since $r>0$ there, $r$ is smooth, and
$\omega=dd^c(r^2/2)$ defines a smooth Ricci-flat K\"ahler metric on $U$.
Compact-torus invariance is preserved by the reference
construction and by these metric constructions.
\end{proof}

In particular, for $0<a<b$, the regular metric annulus satisfies
\[
 \{x\in U\mid a\le r(x)\le b\}
 \subset\{a/\kappa\le r_\xi\le\kappa b\}.
\]
The set on the right is compact and avoids the vertex. We will use
such reference annuli on a resolution, while keeping $r$ in the
differential calculations on $U$.

\subsection{The Bochner type argument and its 
singular extension via cutoff}\label{sec:bochner-cutoff}
We derive the scalar inequality \eqref{eq:bochner}, then show in two integration-by-parts steps that \eqref{eq:good-cutoffs} forces parallelism. All differential operators in this subsection act on the smooth manifold $C^{\reg}\setminus\{o\}$; no boundary condition at the singular set is imposed.

For $0<a<b<\infty$, set
\[
A_{a,b}^{\reg}:=\{x\in C^{\reg}\setminus\{o\}\mid a<r(x)<b\}.
\]
These are open subsets of the regular locus, where $r$ is smooth.
All norms and scalar integrals downstairs use $g$ and $dV_g=\omega^n/n!$. The gradient is defined by $g(\nabla f,V)=df(V)$ for every real tangent vector $V$. In particular $|\nabla f|_g=|df|_g$. The condition $F\in L^2(A_{a,b}^{\reg},dV_g)$ means
\[
\int_{A_{a,b}^{\reg}}|F|^2\,dV_g<\infty,
\]
including the approach to singular limit points of this regular annulus. This is stronger than $F\in L^2_{\mathrm{loc}}(C^{\reg},dV_g)$, which tests only compact subsets avoiding the singular set. Below $F$ will be taken as a fixed function constructed from $\alpha$, not an arbitrary function.

\paragraph{Metric and Laplacian conventions.}
For $\omega=ig_{a\bar b}dz^a\wedge d\bar z^b$, the real metric is $2\operatorname{Re}(g_{a\bar b}dz^a\otimes d\bar z^b)$. We use the nonnegative Hodge Laplacians
\[
\Delta_d:=dd^*+d^*d,\qquad
\Delta_{\bar\partial}:=\bar\partial\bar\partial^*+
\bar\partial^*\bar\partial.
\]
The stars on $d$ and $\bar\partial$ denote their formal adjoints.

Let $\nabla$ be the Levi-Civita connection of $g$. Since $g$ is K\"ahler, its connection on $T^{1,0}$, and hence on holomorphic covariant tensors, is the Chern connection \cite[Chapter VI, \S4]{Demailly}. Note the identity $\nabla R=\id$ for $R=r\partial_r$.

\paragraph{The Bochner type inequality}
Fix a homogeneous $\alpha\in H^0(C,\Omega_C^{[p]})$
as in Theorem \ref{thm:bochner}, and put
\[
u_\alpha:=|\alpha|_g^2,\qquad F:=\log(1+u_\alpha).
\]
Firstly, by the Chern-Bochner formula (cf., e.g., \cite[Chapter V, VIII]{Demailly}),
$\operatorname{Ric}(g)=0$ implies
\[
-\Delta_{\bar\partial}u_\alpha=|\nabla'\alpha|_g^2.
\]

We also estimate the Laplacian of $F$.
Let $\mathscr E=\bigwedge^p(T^{1,0}(C^{\reg}\setminus\{o\}))^*$,
equipped with the Hermitian metric induced by $g$.
For every $Z\in T^{1,0}$, the first-derivative identity and
Cauchy-Schwarz in $\mathscr E$ give
\[
|\partial u_\alpha(Z)|^2
=|\langle\nabla'_Z\alpha,\alpha\rangle_g|^2
\le u_\alpha\,|\nabla'_Z\alpha|_g^2.
\]
Taking the metric trace of this inequality of Hermitian forms on
$T^{1,0}$ yields
\[
|\partial u_\alpha|_g^2\le
u_\alpha|\nabla'\alpha|_g^2.
\]
For a real smooth function $u$ and a real smooth scalar function $\varphi$,
the ordinary chain rule, applied to $d^c\varphi(u)=\varphi'(u)d^cu$, is
\[
dd^c\varphi(u)
 =\varphi'(u)dd^cu+\varphi''(u)\,du\wedge d^cu,
\qquad
\Lambda_\omega(du\wedge d^cu)=|\partial u|_g^2.
\]
Apply this with $\varphi(u)=\log(1+u)$ and use
$\Delta_d=2\Delta_{\bar\partial}$. We obtain
\begin{equation}\label{eq:bochner}
\begin{aligned}
-\Delta_dF
&=2\left(\frac{-\Delta_{\bar\partial}u_\alpha}{1+u_\alpha}
 -\frac{|\partial u_\alpha|_g^2}{(1+u_\alpha)^2}\right)\\
&\ge\frac{2|\nabla'\alpha|_g^2}{1+u_\alpha}
 -\frac{2u_\alpha|\nabla'\alpha|_g^2}{(1+u_\alpha)^2}\\
&=\frac{2|\nabla'\alpha|_g^2}{(1+u_\alpha)^2}\ge0.
\end{aligned}
\end{equation}

The homogeneity assumption gives the following: 
\[
\delta_s^*u_\alpha=u_\alpha,
\qquad RF=0,
\]
where $\delta_s$ denotes dilation by $s>0$. We use these below (to be precise, 
in \eqref{eq:bochner-ibp}). 

\paragraph{On the requirement for the cutoff functions}
If the vertex is the only singular point of $C$,
we may simply take $\chi_L\equiv1$ on each fixed annulus.
Indeed, the radial cutoff $\psi(r)$ already has compact
support in the smooth locus, and the error terms involving
derivatives of $\chi_L$ vanish. For general $C$, however,
the singular locus may meet every such annulus, and we
need additional cutoffs satisfying \eqref{eq:good-cutoffs}. 

Fix $0<a_0<a<b<b_0$. We seek a family of smooth functions 
$\{\chi_L\}_{L\in\mathbb{Z}_{>0}}$ on $A_{a_0,b_0}^{\reg}$, indexed by $L\to\infty$, with $0\le\chi_L\le1$. For each $L$, the function vanishes near the singular set in the closure of $\{x\in C^{\reg}\setminus\{o\}:a\le r(x)\le b\}$, and
$\chi_L(x)\to1$ for every $x\in A_{a,b}^{\reg}$ as $L\to\infty$.
No condition at the radial boundary is imposed on $\chi_L$; a separate fixed function of $r$ will handle it.
For the $F$ just defined, the sufficient estimates are
\begin{equation}\label{eq:good-cutoffs}
\begin{gathered}
F\in L^2(A_{a,b}^{\reg},dV_g),\\
\int_{A_{a,b}^{\reg}}F|\Delta_d\chi_L|\,dV_g
\xrightarrow[L\to\infty]{}0,\qquad
\int_{A_{a,b}^{\reg}}|\nabla\chi_L|_g^2\,dV_g
\xrightarrow[L\to\infty]{}0.
\end{gathered}
\end{equation}
The purpose of the remaining subsections \ref{sec:lower}-\ref{sec:cutoffs} in this section is to prove these necessary estimates. 

\begin{proof}[Proof of Theorem \ref{thm:bochner}, assuming \eqref{eq:good-cutoffs}]
The radial cutoff $\psi(r)$ used below is standard.
The main difficulty in the non-isolated case is to construct
the additional cutoff $\chi_L$ with the estimates
\eqref{eq:good-cutoffs}, which is the purpose of 
Subsections \ref{sec:lower}-\ref{sec:cutoffs}. 
Fix a nonnegative $\psi\in C_c^\infty((a,b))$, independent of $L$, 
and write $\psi$ also for $\psi(r)$. We now give a simple 
estimation using integration by parts via the cut off functions. 

Firstly, since $\psi\chi_L$ has compact support in the smooth locus, the two scalar identities above give
\[
\begin{aligned}
I_L&:=\int_{A_{a,b}^{\reg}}\psi\chi_L(-\Delta_dF)\,dV_g
=-\int_{A_{a,b}^{\reg}}F\Delta_d(\psi\chi_L)\,dV_g\\
&=-\int_{A_{a,b}^{\reg}}\psi F\Delta_d\chi_L\,dV_g
-\int_{A_{a,b}^{\reg}}\chi_LF\Delta_d\psi\,dV_g\\
&\hspace{1em}+2\int_{A_{a,b}^{\reg}}F\langle\nabla\psi,\nabla\chi_L\rangle_g\,dV_g.
\end{aligned}
\]
On the other hand, since
$\langle\nabla F,\nabla\psi\rangle_g
=\frac{\psi'(r)}r\,RF=0,$
a second integration by parts therefore yields
\[
\begin{aligned}
-\int_{A_{a,b}^{\reg}}\chi_LF\Delta_d\psi\,dV_g
&=-\int_{A_{a,b}^{\reg}}\langle\nabla(\chi_LF),\nabla\psi\rangle_g\,dV_g\\
&=-\int_{A_{a,b}^{\reg}}F\langle\nabla\chi_L,\nabla\psi\rangle_g\,dV_g.
\end{aligned}
\]
Thus we have
\begin{equation}\label{eq:bochner-ibp}
\begin{aligned}
I_L=\int_{A_{a,b}^{\reg}}\psi\chi_L(-\Delta_dF)\,dV_g
&=\int_{A_{a,b}^{\reg}}\psi F(-\Delta_d\chi_L)\,dV_g\\
&\quad+\int_{A_{a,b}^{\reg}}F\langle\nabla\psi,\nabla\chi_L\rangle_g\,dV_g.
\end{aligned}
\end{equation}

Now we see that the two errors tend to zero as follows.
The absolute value of the first term on the right of \eqref{eq:bochner-ibp} is at most
\[
\|\psi\|_{L^\infty(a,b)}
\int_{A_{a,b}^{\reg}}F|\Delta_d\chi_L|\,dV_g
\xrightarrow[L\to\infty]{}0
\]
by \eqref{eq:good-cutoffs}. For the second, first use the pointwise bound
$|\langle\nabla\psi,\nabla\chi_L\rangle_g|
\le|\nabla\psi|_g|\nabla\chi_L|_g$, and then Cauchy-Schwarz:
\[
\begin{aligned}
\left|\int_{A_{a,b}^{\reg}}F\langle\nabla\psi,\nabla\chi_L\rangle_g\,dV_g\right|
&\le\|\psi'\|_{L^\infty(a,b)}
\int_{A_{a,b}^{\reg}}F|\nabla\chi_L|_g\,dV_g\\
&\le\|\psi'\|_{L^\infty(a,b)}
\left(\int_{A_{a,b}^{\reg}}F^2\,dV_g\right)^{1/2}
\left(\int_{A_{a,b}^{\reg}}|\nabla\chi_L|_g^2\,dV_g\right)^{1/2}
\xrightarrow[L\to\infty]{}0.
\end{aligned}
\]
Indeed $|\nabla\psi|_g=|\psi'(r)|$ because $|\nabla r|_g=1$.
The third factor tends to zero by the second condition of \eqref{eq:good-cutoffs}.
Hence $I_L\to0$.

By \eqref{eq:bochner}, $\psi\chi_L(-\Delta_dF)\ge0$, and its pointwise limit is $\psi(-\Delta_dF)$. Fatou's lemma gives
\[
0\le\int_{A_{a,b}^{\reg}}\psi(-\Delta_dF)\,dV_g
\le\liminf_{L\to\infty}I_L=0.
\]
Applying \eqref{eq:bochner} once more,
\[
\int_{A_{a,b}^{\reg}}\psi\,
\frac{2|\nabla'\alpha|_g^2}{(1+u_\alpha)^2}\,dV_g=0.
\]
The integrand is continuous and nonnegative on the smooth locus, so $\nabla'\alpha=0$ wherever $\psi>0$. Together with $\nabla''\alpha=0$, this gives $\nabla\alpha=0$ there. Varying the annulus and $\psi$ proves parallelism off the vertex; a smooth vertex is included by continuity. In particular $d\alpha=0$.
\end{proof}

\subsection{The norm function behaviour near the singular locus}\label{sec:lower}
This subsection aims to prove \eqref{eq:F-growth} and \eqref{eq:FL2} on 
the estimates of $F$. 
We obtain \eqref{eq:F-growth}
by combining a known upper bound for the transverse metric with the determinant equation \eqref{eq:coneMA}: an upper bound for the eigenvalues and a lower bound for their product give a lower bound for every eigenvalue.

Choose a projective $T$-equivariant log resolution $\pi:\widetilde C\to C$, isomorphic over $C^{\reg}$, as in \cite[Lemmas 3.1-3.3]{Nghiem}, with reduced SNC exceptional divisor $D=\bigcup_iD_i$. Put $\widetilde C^{\circ}:=\widetilde C\setminus\pi^{-1}(o)$. Let $\kappa\ge1$ be the comparison constant in \eqref{eq:radius-comparison}. For the annuli fixed in \S \ref{sec:bochner-cutoff}, put
\[
\Omega=\pi^{-1}\bigl(\{a_0/\kappa<r_\xi<\kappa b_0\}\bigr),\qquad
K_{a,b}=\pi^{-1}\bigl(\{a/\kappa\le r_\xi\le\kappa b\}\bigr).
\]
The reference radius $r_\xi$ is proper and continuous, so $K_{a,b}$ is compact and contained in the open set $\Omega$. By \eqref{eq:radius-comparison}, $\Omega$ contains $\pi^{-1}(A_{a_0,b_0}^{\reg})$, and $K_{a,b}$ contains the inverse image of $\{x\in C^{\reg}\setminus\{o\}\mid a\le r(x)\le b\}$. Thus estimates on this compact reference annulus apply to the metric annulus used in the Bochner argument. Singular rays may meet this set; no compactness of the whole exceptional divisor is required. Choose smooth Hermitian metrics $H_i$ on $\OO_{\widetilde C}(D_i)$ near $\overline\Omega$, and rescale them so that, for the canonical sections $s_i$,
\begin{equation}\label{eq:divisor-log}
N_D=\prod_i|s_i|_{H_i}\le e^{-1/2},\qquad
\rho=\log N_D^2\le-1,\qquad
dd^c\rho=2\pi[D]-\Theta,
\end{equation}
where $\Theta$ is smooth. 
Thus $\rho$ uses norms of divisor sections, without 
requiring a global equation for $D$. 
We crucially use this $\rho$ for our analysis in 
this section. Note that this $\rho$ is a quasi-psh from the 
above equation \eqref{eq:divisor-log}. 

Below, we prepare a weak $C^2$-estimate of $g$ near the singular 
locus/exceptional divisors. It heavily uses intermediate 
estimates and construction in \cite{Nghiem}, not only the main 
theorem (its Theorem 1.5) in it, as well as \cite{Berman}. 

\begin{lemma}[Polynomial lower bound]\label{lem:lower}
For every compact $K\subset\Omega$ and smooth positive Hermitian metric $H$ on $T\widetilde C$ near $K$, there are $c,M>0$ such that
\[
\pi^*g\ge cN_D^M H\qquad\text{on }K\setminus D.
\]
\end{lemma}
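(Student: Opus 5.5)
The strategy is the one announced at the start of this subsection: bound the eigenvalues of $\pi^*g$ from above, bound their product from below via \eqref{eq:coneMA}, and conclude a lower bound for each eigenvalue.

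\emph{Upper bound.} I would invoke the intermediate estimates of \cite{Nghiem}, which rest on a Chern--Lu/Laplacian argument of the type in \cite{Berman}. Work on the compact set $K\subset\Omega$. Write $\pi^*\omega$, restricted to $\widetilde C^\circ\setminus D$, as $dd^c\phi$ with $\phi=\pi^*(r^2/2)$ bounded. The Laplacian estimate applied with the barrier built from $\rho$ in \eqref{eq:divisor-log} should give
\[
\operatorname{tr}_H(\pi^*g)\le C_1N_D^{-M_1}\qquad\text{on }K\setminus D
\]
for some $C_1,M_1>0$. The form I would aim for is $\operatorname{tr}_H\pi^*\omega\le Ce^{-A\rho}$. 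This is exactly a polynomial bound in $N_D^{-1}$, since $e^{-A\rho}=N_D^{-2A}$. If Nghiem only states such a bound on a fixed reference cone structure, or for the transverse Kähler metric on the quasi-regular quotient $X_\chi$, I would transfer it. The transfer would use the homogeneity of $r$ together with the comparison \eqref{eq:radius-comparison}, which confines everything to the compact reference annulus $K$. On $K$ the radial and Reeb directions contribute smooth bounded terms: $|R|_g=r$ is bounded there. The $\chi$-orbit map can lose at most polynomially in $N_D$ near $D$, by smoothness of the resolution.

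\emph{Determinant lower bound.} By \eqref{eq:coneMA} and \eqref{eq:canmeasure},
\[
(\pi^*\omega)^n=c_0\,b\prod_i|s_i|_{H_i}^{2a_i}\,dV_H,\qquad a_i>-1.
\]
Wherever $a_i\ge0$ we have $|s_i|^{2a_i}\ge N_D^{2a_i}$, because each factor $|s_i|\le N_D^{0}$-scale; after the normalization $N_D\le e^{-1/2}$ one may assume every $|s_i|_{H_i}\le1$. So $|s_i|^{2a_i}\ge N_D^{2a_i}$ holds up to reindexing by $N_D\le|s_i|$. Where $a_i<0$ the factor is at least $1$. In either case $\det_H(\pi^*g)\ge c_2N_D^{M_2}$ on $K\setminus D$, using that $b$ is positive and continuous on the compact set $K$.

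\emph{Conclusion.} Let $\lambda_1\le\dots\le\lambda_n$ be the eigenvalues of $\pi^*g$ relative to $H$. Then
\[
\lambda_1\ge\frac{\det}{\lambda_n^{n-1}}\ge c_2N_D^{M_2}\bigl(C_1^{-1}N_D^{M_1}\bigr)^{n-1},
\]
which gives the claim with $M=M_2+(n-1)M_1$. A change of the auxiliary metric $H$ on the compact set $K$ affects only the constants.

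\emph{Main obstacle.} I expect the hardest step to be the first: extracting from \cite{Nghiem} a polynomial trace bound that is uniform up to $D$ on $K$, including near singular rays, for possibly irrational $\xi$. My first attempt would be to redo the Chern--Lu inequality directly on $\widetilde C^\circ\setminus D$. Ricci-flatness gives $\Delta_\omega\log\operatorname{tr}_\omega H\ge -B\operatorname{tr}_\omega H$, with $B$ the bisectional bound of $H$. I would then apply the maximum principle to
\[
\log\operatorname{tr}_\omega H-A\phi+\varepsilon\rho
\]
on $K$, where $\varepsilon\rho\to-\infty$ at $D$ handles the noncompactness. The radial cutoff at the boundary of $K$ would be controlled by homogeneity. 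Note that this version bounds $\operatorname{tr}_\omega H$, which yields the lower bound for $g$ directly, before letting $\varepsilon\to0$ with a residual $N_D^{-M}$ factor from the construction. Should this route work, it bypasses the determinant step entirely.
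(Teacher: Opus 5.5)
\textbf{Verdict.} The upper bound on $\pi^*g$ has a genuine gap. Your outer skeleton matches the paper's: an upper bound on all eigenvalues of $\pi^*g$ relative to $H$, a lower bound on their product from \eqref{eq:coneMA}--\eqref{eq:canmeasure}, then $\lambda_1\ge\prod_j\lambda_j/\lambda_n^{n-1}$. Your handling of the factors $|s_i|^{2a_i}$ is fine once each $H_i$ is rescaled so that $|s_i|_{H_i}\le1$ on $K$.

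\textbf{What the transverse input gives.} The upper bound $\pi^*g\le CN_D^{-M}H$ carries all the analytic content, and you do not establish it. What \cite{Nghiem} provides (Proposition 3.13 passed to the limit) is only a bound on the transverse form, $\theta_u\le CN_D^{-M_0}\theta'_{\mathrm{ref}}$. Even this requires converting its weight $\exp(-A\psi-n(\Psi_-+u))$ into $N_D^{-M_0}$ via $\psi\ge\sum_ib_i\log|s_i|^2_{H_i}-C$ and the discrepancy bound on $\Psi_-+u$.

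\textbf{Why your transfer fails.} You argue the radial and Reeb directions are harmless because $|R|_g=r$ is bounded on $K$. In local coordinates $(z,w)$ near a point of $K$, with $\ell=\log(r^2\circ\pi)=w+\bar w+v(z,\bar z)$, one has
\[
\pi^*\omega=\frac{e^{\ell}}{2}\Bigl\{\theta_u+\sqrt{-1}\,(dw+\partial v)\wedge(d\bar w+\bar\partial v)\Bigr\}.
\]
So $\pi^*g$ evaluated on a coordinate vector $\partial_{z_a}$ contains the term $|\partial_av|^2$. These cross terms measure how far the $g$-horizontal distribution tilts away from the reference one. They are controlled neither by $\theta_u$ nor by the length of $R$. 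Smoothness of the resolution or of a quasi-regular orbit map says nothing about them either, since $v$ comes from the singular potential.

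\textbf{The missing step.} What is needed is a polynomial gradient bound for $v$. The paper uses that $v$ is uniformly bounded by \eqref{eq:radius-comparison} and that $|\Delta_{\mathrm{flat}}v|\le CN_D^{-M_0}$ by the transverse bound. It applies the interior gradient estimate for Poisson's equation on slice balls of radius $\delta=cN_D(p)$, small enough to avoid $D$ and keep $N_D\asymp N_D(p)$. This gives
\[
|dv|\le C\bigl(\delta^{-1}\|v\|_\infty+\delta\|\Delta_{\mathrm{flat}}v\|_\infty\bigr)\le CN_D^{-M_1},
\]
hence $\pi^*g\le CN_D^{-M_2}H$.

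\textbf{The Chern--Lu fallback.} As written, it does not bypass this. Since $\omega=dd^c\phi$ with $\phi=\pi^*(r^2/2)$, one has $\operatorname{tr}_\omega dd^c\phi=n$. So $-A\phi$ produces no negative multiple of $\operatorname{tr}_\omega H$ to absorb $-B\operatorname{tr}_\omega H$. For that you need a reference K\"ahler form on $\widetilde C$ dominating $H$, e.g.\ $\pi^*\omega_{\mathrm{amb}}-\delta\Theta'$ for a $\pi$-anti-ample exceptional combination, whose log-term enters with a fixed coefficient rather than $\varepsilon\to0$. The maximum principle on $K\setminus D$ also needs a priori control of $\operatorname{tr}_\omega H$ near $D$, which is unknown for the limiting metric. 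It further needs control at the radial ends of $K$, where $\operatorname{tr}_\omega H$ is not dilation invariant. Handling these is exactly why \cite{Nghiem} works transversally with smooth approximants, so this route would mean redoing that work rather than avoiding it.
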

\begin{proof}
For the differential estimates, choose a {\it smooth} $\xi$-
homogeneous 
reference radius $r_{\mathrm{ref}}$\footnote{denoted by $r_\xi$ in {\it op.cit}. } from a positive-weight affine embedding, 
chosen to be $T_c$-invariant as well and the 
level set on $\widetilde C^{\circ}$ is compact 
(see \cite[Lemma 2.2 and \S 2.3]{HS16}). 
The relative potential $u$ defined below is then invariant under both the 
radial and Reeb flows, hence basic.  
Set  
\[
\theta_{\mathrm{ref}}=dd^c\log r_{\mathrm{ref}}^2,\qquad
u=\log(r^2/r_{\mathrm{ref}}^2),\qquad \theta_u=\theta_{\mathrm{ref}}+d_Bd_B^cu.
\]
Here $d_Bd_B^c$ means $dd^c$ in the holomorphic coordinates transverse 
to the foliation generated by $E$. 
By \eqref{eq:radius-comparison}, 
$|\log(r^2/r_\xi^2)|\le2\log\kappa$ on the regular locus.
The positive continuous ratio $r_\xi/r_{\mathrm{ref}}$ is invariant under the
$\xi$-dilations and has positive lower and finite upper bounds on a
compact reference level. Thus $u=\log(r^2/r_{\mathrm{ref}}^2)$ is bounded there. 

On $\widetilde C^{\circ}$, let $\theta'_{\mathrm{ref}}$ be the smooth closed real
$(1,1)$-form in 
\cite[Lemma 3.3]{Nghiem}
\footnote{denoted by $\omega_B$ in {\it loc.cit}}
obtained in 
\cite[Proposition 4.3]{Berman}. 
It is basic for the lifted radial and Reeb fields $R,IR$. 
Its associated Hermitian form is positive definite on 
$T^{1,0}\widetilde C/\C E$, including along $D$ away from $\pi^{-1}(o)$. We will benefit from such a difference when 
$0\in C$ is not an isolated singularity. (Indeed, otherwise, 
we can simply put $\theta'_{\rm ref}:=\theta_{\rm ref}$.) 
The estimates 
\cite[Proposition 3.13 and \S3.5]{Nghiem} then gives, for 
some 
$C,M_0>0$, the pointwise inequality of semipositive Hermitian forms
\begin{equation}\label{eq:transverse-upper}
\theta_u\le CN_D^{-M_0}\theta'_{\mathrm{ref}}\qquad\text{on }K\setminus D.
\end{equation}

To see this, firstly, the uniform estimate of
\cite[Proposition 3.13]{Nghiem} passes to the limit under the local
$C^1_{\rm loc}$-convergence of
\cite[\S3.5, proof of Theorem 1.5]{Nghiem}%
\footnote{denoted by 
$\varphi_{j,\epsilon(j)}\to\varphi_0$ in {\it loc.cit}}
in the sense of currents, giving
\begin{align}\label{eq:transverse-link-bound}
\theta_u
\le C\exp\bigl(-A\psi(x)-n(\Psi_-(x)+u(x))\bigr)
\theta'_{\rm ref}
\quad\text{on }S_{\rm ref}\setminus D,
\end{align}
where $S_{\rm ref}:=\{r_{\rm ref}=1\}
\subset\widetilde C\setminus\pi^{-1}(o)$.
Here, $\theta_u$ satisfies the transverse Monge-Amp\`ere equation
\begin{align}\label{eq:transverse-MA-link}
\theta_u^{n-1}\wedge\eta
=\exp\bigl(n(\Psi_+-\Psi_- -u)\bigr)
(\theta'_{\rm ref})^{n-1}\wedge\eta
\quad\text{on }S_{\rm ref}\setminus D,
\end{align}
with $\eta:=d^c\log r_{\rm ref}^2|_{S_{\rm ref}}$,
and the function $\Psi_-$ of \cite[Lemma 3.5]{Nghiem}
satisfies
\[
n(\Psi_-(x)+u(x))
\ge \sum_{a_i<0}(-a_i)\log|s_i|_{H_i}^2-C
\quad (x\in S_{\rm ref}\setminus D),
\]
where $a_i$ are the discrepancies of $\pi:\widetilde C\to C$,
since $u$ is bounded. 
\footnote{In the case of symplectic singularities, 
the canonical density has no divisorial poles, 
since $C$ is canonical Gorenstein as noted above.
Thus one may take $\Psi_-=0$ in the decomposition
of \cite[Lemma 3.5]{Nghiem}.}
On the other hand, from
\cite[proofs of Lemmas 3.2-3.3]{Nghiem},
\[
\psi(x)=\Phi_B(x)-\log r_{\rm ref}^2(x)
\ge \sum_i b_i\log|s_i|_{H_i}^2-C,
\qquad b_i\ge0,
\quad (x\in S_{\rm ref}\setminus D).
\]
Thus, combining \eqref{eq:transverse-link-bound}
with the above inequalities, we obtain
\[
\exp\bigl(-A\psi-n(\Psi_-+u)\bigr)
\le C\prod_i|s_i|_{H_i}^{-2(Ab_i+a_i^-)}
\le CN_D^{-M_0}
\quad\text{on }S_{\rm ref}\setminus D,
\]
where $a_i^-:=\max\{-a_i,0\}$ and
$M_0\ge 2\max_i\{Ab_i+a_i^-\}$.
Radial transport over bounded flow times then gives
\eqref{eq:transverse-upper} on $K\setminus D$,
for every compact $K\subset\Omega$. 

We next recover the full metric $g$ in local coordinates, 
from the information on $\theta_u$ we got. Inside $K$, we take any point 
and its small enough open neighborhood with 
local coordinates $(z_1,\cdots,z_{n-1},w)$ whose intersection with $D$ is 
$\{z\in P\mid z_1\cdots z_s=0\}.$ 
Thus, outside $D$, we set 
\[
\begin{aligned}
  \ell(z,w)&:=\log(r^2\circ\pi)(z,w)
       =w+\bar w+v(z,\bar z),\\
  \theta_u&=dd_z^c v
       =\sqrt{-1}\sum_{a,b=1}^{n-1}B_{a\bar b}\,dz^a\wedge d\bar z^b,
  \qquad B_{a\bar b}:=\partial_a\partial_{\bar b}v .
\end{aligned}
\]
Here $v$ is a real-valued smooth local function outside $D$. 
All derivatives and norms in the $z$-variables below are 
taken with respect to the Euclidean metric on each slice 
$w={\text{constant}}$. 
With the nonnegative sign convention,
\[
  \Delta_{\rm flat}v
  =-4\sum_{a=1}^{n-1}\partial_a\partial_{\bar a}v
  =-4\sum_{a=1}^{n-1}B_{a\bar a}
  \qquad\text{on }P\setminus D_z, 
\]
where $\Delta_{\rm flat}$ the Laplacian with respect to the standard flat 
metric on $\mathbb{C}^{n-1}$. 
The right-hand side is bounded in absolute value by
$CN_D^{-M_0}$, by \eqref{eq:transverse-upper}.
On finitely many smaller SNC charts covering $K$, choose $\delta=cN_D(p)$
for $p=(z_0,w_0)$ with $c>0$ uniformly small, so that
$B_\delta(z_0)\times\{w_0\}$ avoids $D$ and
$N_D(z,w_0)\asymp N_D(p)$ there.
Since \eqref{eq:radius-comparison} and $\ell=w+\bar w+v$ bound $v$
uniformly, the interior gradient estimate gives
$|dv(z_0)|\le C\bigl(\delta^{-1}\|v\|_\infty+
\delta\|\Delta_{\rm flat}v\|_\infty\bigr)$.
Applying an estimate for
Poisson's equation \cite[\S3.4, (the former inequality of) 
Theorem 3.9]{GT} 
on $B_\delta(z_0)$, we obtain 
\begin{align}\label{eq:flowbox-gradient}
  |dv(z_0)|
  &\le C\left(N_D(p)^{-1}
                   +N_D(p)^{-M_0}\right)
   \le CN_D(p)^{-M_1},
   \end{align}
where $p$ in the local chart outside $D$, $M_1:=\max\{1,M_0\}$. 
We now estimate the full metric $g$. We have 
\[
\begin{aligned}
  \pi^*\omega
  &=\frac12dd^c e^\ell\\
  &=\frac{e^\ell}{2}
       \left(dd^c\ell+\sqrt{-1}\partial\ell\wedge\bar\partial\ell\right)\\
  &=\frac{e^\ell}{2}
       \left\{\theta_u
       +\sqrt{-1}(dw+\partial v)\wedge(d\bar w+\bar\partial v)\right\}.
\end{aligned}
\]
Its Hermitian quadratic form at $p=(z_0,w_0)$ is therefore
\[
  G_p(Z,\zeta)
  =\frac{e^{\ell(p)}}2
       \left\{
       \sum_{a,b=1}^{n-1}B_{a\bar b}(z_0)Z^a\overline{Z^b}
       +|\zeta+
          \sum_{a=1}^{n-1}(\partial_a v)(z_0)Z^a|^2
       \right\},
  \qquad (Z,\zeta)\in\mathbb C^{n-1}\times\mathbb C.
\]
Using $e^\ell=(r\circ\pi)^2\le\kappa^4b_0^2$ 
by \eqref{eq:radius-comparison} and the definition of $\Omega$, 
and 
$|\zeta+\partial v(Z)|^2
 \le2|\zeta|^2+2|\partial v|^2|Z|^2$, we obtain 
\begin{align*}
  G_p(Z,\zeta)
  &\le C\left\{
       N_D(p)^{-M_0}|Z|^2
       +|\zeta|^2+N_D(p)^{-2M_1}|Z|^2
       \right\}\\
  &\le CN_D(p)^{-M_2}(|Z|^2+|\zeta|^2),
  \qquad M_2:=\max\{M_0,2M_1\},
\end{align*}
for $p$ outside $D$. 
Here the first inequality uses \eqref{eq:transverse-upper} and \eqref{eq:flowbox-gradient},
and the second uses $N_D\le1$ from \eqref{eq:divisor-log}.
On the finitely many smaller charts, positivity and smoothness of
$H$ give a uniform comparison
\[
  |Z|^2+|\zeta|^2
  \le C H_p\bigl((Z,\zeta),\overline{(Z,\zeta)}\bigr)
\]
in the small neighborhood of $p$. 
Since these charts cover $K$, by replacing $C$ 
by larger enough constant if necessary, we conclude
\begin{equation}\label{eq:full-upper}
  \pi^*g\le CN_D^{-M_2}H
  \qquad\text{on }K\setminus D
\end{equation}
for some $C>0$. 
Finally, let $\lambda_1\le\cdots\le\lambda_n$ be the Hermitian eigenvalues of $\pi^*g$ relative to $H$. Equations \eqref{eq:coneMA}-\eqref{eq:canmeasure} give
\[
\prod_{j=1}^n\lambda_j=b_H\prod_i|s_i|_{H_i}^{2a_i}
\ge cN_D^{M_3},
\]
with $b_H$ smooth and positive and $M_3\ge0$. By \eqref{eq:full-upper},
\[
\lambda_1=\frac{\prod_j\lambda_j}{\prod_{j=2}^n\lambda_j}
\ge cN_D^{M_3+(n-1)M_2},
\]
which proves the lemma.
\end{proof}

\paragraph{Upper bound and $L^2$-property of $F$.}
The extension result \cite[Theorem 1.4]{GKKP} gives a  
regular $p$-form $\widetilde\alpha=\pi^{[*]}\alpha$ on 
$\widetilde C$. Its coefficients are bounded on $K_{a,b}$, 
so Lemma \ref{lem:lower} implies 
\begin{equation}\label{eq:F-growth}
0\le\widetilde F:=\pi^*\log(1+|\alpha|_g^2)
\le C(1-\rho)\qquad\text{on }K_{a,b}\setminus D.
\end{equation}
The same bound gives the $L^2$ assertion in \eqref{eq:good-cutoffs} directly from the klt condition. In an SNC chart, \eqref{eq:canmeasure} expresses the volume density as a smooth positive factor times $\prod_i|z_i|^{2a_i}$, and $\rho=\sum_i\log|z_i|^2+O(1)$. Combining it with 
$a_i>-1$, 
\begin{align}\label{eq:FL2}
F\in L^2(A_{a,b}^{\reg},dV_g)
\end{align}
follows, which confirms one of \eqref{eq:good-cutoffs}. 

\subsection{Construction of the cutoff functions and necessary estimates}\label{sec:cutoffs}
On $\Omega$, we define 
\[
T_0=dd^c((r^2/2)\circ\pi),\qquad
\mathcal S=T_0^{n-1},\qquad \mu=h_0\wedge\mathcal S,
\]
where $h_0$ is a smooth K\"ahler form near $\overline\Omega$. These are well-defined since $T_0=\pi^*\omega$ outside $D$, so it has 
bounded potential so that we can take Bedford-Taylor  products. A useful observation is that
$[D]\wedge\mathcal S=0$: the exceptional divisor maps to a set of codimension at least two. Consequently smoothing a truncation of $\rho$ creates no limiting mass on $D$. This gives exactly the energy estimate on the region where the cutoff changes.

\begin{lemma}\label{lem:energy}
One has $|\rho|\in L^1_{\mathrm{loc}}(\Omega,\mu)$, $\mu(D)=0$, and
\[
[D]\wedge\mathcal S=0,\qquad
dd^c(\rho\mathcal S)=-\Theta\wedge\mathcal S.
\]
For every compact $K\subset\Omega$,
\begin{equation}\label{eq:energy}
\int_{K\cap\{-2L<\rho<-L\}}
d\rho\wedge d^c\rho\wedge\mathcal S=o(L)
\qquad(L\to\infty).
\end{equation}
We denote its left hand side as $J(L)$, for later convenience. 
\end{lemma}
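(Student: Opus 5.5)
We prove the four assertions in order, working throughout on $\Omega$ with $T_0=dd^c((r^2/2)\circ\pi)$. This current has a locally bounded psh potential, since $r^2$ extends as a locally bounded psh function on $C$ and pulls back to one on $\Omega$. Hence $\mathcal S=T_0^{n-1}$ and $\mu$ are well-defined Bedford-Taylor products.

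\textbf{Step 1: $[D]\wedge\mathcal S=0$ and $\mu(D)=0$.} Locally on $\Omega$ we have $(r^2/2)\circ\pi=\varphi\circ\pi$ with $\varphi$ bounded psh on $C$. For each component $D_i$, the restriction of $T_0$ to $D_i$ has potential $\varphi|_{\pi(D_i)}\circ\pi$, which factors through $\pi(D_i)$. This image has dimension at most $n-2$ because $\pi$ is isomorphic over $C^{\reg}$ and $\codim C^{\sing}\ge2$. Thus $(T_0|_{D_i})^{n-1}=0$ on the $(n-1)$-dimensional $D_i$.

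To make this rigorous, approximate $\varphi$ by decreasing smooth psh functions $\varphi_\epsilon$ on an ambient neighbourhood, via the affine embedding and local smoothing. For each $\epsilon$ the form $(dd^c\varphi_\epsilon\circ\pi)^{n-1}$ restricted to $D_i$ is a pullback of a $2(n-1)$-form from a space of dimension $\le n-2$, hence vanishes. We then pass to the limit using Bedford-Taylor continuity and the identity $[D_i]\wedge T^{n-1}=[D_i]\wedge(T|_{D_i})^{n-1}$ for bounded potentials. Since $\mu=h_0\wedge\mathcal S$ and $\mathcal S$ puts no mass on pluripolar sets beyond what $[D]\wedge$ detects, this also gives $\mu(D)=0$. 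A Bedford-Taylor measure with bounded potentials charges no pluripolar set, so in fact $\mu(D)=0$ is immediate in any case.

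\textbf{Step 2: $|\rho|\in L^1_{\mathrm{loc}}(\mu)$ and the $dd^c$ identity.} The function $\rho$ is quasi-psh, and $\rho+\phi$ is psh locally for a smooth $\phi$ with $dd^c\phi\ge\Theta$. Quasi-psh functions are integrable against Monge-Amp\`ere measures of bounded potentials by the Chern-Levine-Nirenberg inequality. This gives $|\rho|\in L^1_{\mathrm{loc}}(\mu)$, and also $\rho\,\mathcal S$ is well defined. Bedford-Taylor/Demailly calculus, with $\rho$ approximated by the truncations $\max(\rho,-k)$, then gives
\[
dd^c(\rho\mathcal S)=dd^c\rho\wedge\mathcal S=(2\pi[D]-\Theta)\wedge\mathcal S.
\]
By Step 1 this equals $-\Theta\wedge\mathcal S$. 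The justification that the limit of $dd^c\max(\rho,-k)\wedge\mathcal S$ is $dd^c\rho\wedge\mathcal S$ with the $[D]$ term included uses monotone convergence of quasi-psh functions.

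\textbf{Step 3: the energy estimate \eqref{eq:energy}.} This is the main step. Take a cutoff $\eta\in C_c^\infty(\Omega)$ with $\eta=1$ on $K$, and set $\rho_L=\max(\rho,-2L)$ and $\rho_{L}'=\max(\rho,-L)$. Integrate by parts, using $d\rho\wedge d^c\rho=\tfrac12dd^c\rho^2-\rho\,dd^c\rho$ on $\{\rho>-2L\}$. A cleaner route uses the function $f_L=\min(\max(\rho,-2L),-L)+L$, which is bounded by $L$ and has $df_L=d\rho$ exactly on $\{-2L<\rho<-L\}$. Then
\[
J(L)\le\int\eta\,df_L\wedge d^cf_L\wedge\mathcal S=-\int\eta f_L\,dd^cf_L\wedge\mathcal S-\int f_L\,d\eta\wedge d^cf_L\wedge\mathcal S.
\]
The Laplacian $dd^cf_L$ consists of $dd^c\rho$ on the band, which is $-\Theta$ off $D$ by Step 2, plus the boundary measures of the max/min operations on the level sets $\{\rho=-L\}$ and $\{\rho=-2L\}$. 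These boundary measures have total mass bounded by the fluxes, and these in turn are controlled by $\int_{\{\rho<-L\}}|\Theta|\wedge\mathcal S$ and by terms involving $d\eta$. Multiplying by $|f_L|\le L$, each resulting term is $L$ times the $\mu$-type mass of $\{\rho<-L\}\cap\operatorname{supp}\eta$. That mass tends to $0$ because $\mu(D)=0$ and $\{\rho<-L\}$ decreases to $D$. The $d\eta$ term is handled by Cauchy-Schwarz, absorbing half of the energy, and the remaining mass factor is again $o(1)$. Altogether this gives $J(L)=o(L)$.

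The delicate point is controlling the sign and mass of the level-set boundary measures. I would handle it by replacing the sharp max/min with regularized maxima and using that both truncations are quasi-psh, so that their Laplacians are bounded below by $-\Theta$.
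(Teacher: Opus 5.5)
\textbf{Steps 1 and 2.} These are fine. For $[D]\wedge\mathcal S=0$ you take a different but valid route from the paper. You regularize $r^2/2$ locally, which requires a local bounded psh extension to an ambient domain. You then use that $[D_i]\wedge(dd^c(\varphi_\epsilon\circ\pi))^{n-1}$ vanishes pointwise because $\pi|_{D_i}$ has rank at most $n-2$, and pass to the limit by Bedford--Taylor monotone continuity. The paper instead pushes forward the iterated products $Q_{j+1}=dd^c(uQ_j)$, using only positivity and $0\le u\le M$.

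Your second justification of $\mu(D)=0$ is the right one: mixed Monge--Amp\`ere measures with bounded potentials do not charge pluripolar sets. It also follows from $|\rho|\in L^1_{\mathrm{loc}}(\mu)$.

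In Step 2, identifying $dd^c(\rho\mathcal S)$ with $2\pi[D]\wedge\mathcal S-\Theta\wedge\mathcal S$, where the first term is your iterated product, is a commutativity statement for a codimension-one unbounded locus. It does not follow from monotone convergence alone, but it is citeable (Demailly, or Andersson--Wulcan as in the paper).

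\textbf{The gap is in Step 3.} With your $f_L$, the kink of $\max(\cdot,-2L)$ puts a positive measure on $\{\rho=-2L\}$. There the weight $-\eta f_L$ equals $\eta L\ge 0$. The kink of $\min(\cdot,-L)$ drops out because $f_L=0$ there. So your identity reads $\int\eta\,df_L\wedge d^cf_L\wedge\mathcal S=L\,\Phi_\eta(-2L)+(\text{lower order})$, where $\Phi_\eta(s)$ is the $\eta$-weighted flux of $d\rho\wedge d^c\rho\wedge\mathcal S$ through $\{\rho=s\}$. By coarea the left side is $\int_{-2L}^{-L}\Phi_\eta(s)\,ds$, so the identity only restates ``energy $\approx L\times$ flux''. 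The entire content of $o(L)$ is that $\Phi_\eta(s)\to0$.

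You say the fluxes are controlled by $\int_{\{\rho<-L\}}|\Theta|\wedge\mathcal S$ and ``terms involving $d\eta$''. The latter is $\int_{\{\rho<s\}}d\eta\wedge d^c\rho\wedge\mathcal S$, which is first order in $\rho$ and not a $\mu$-mass. Its finiteness near $D\cap\operatorname{supp}d\eta$ is never established, and that set is nonempty in general.

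Your proposed remedy does not work either. Quasi-psh gives the lower bound $dd^cf_L\ge-\Theta$, but since $-f_L\ge0$ you need an \emph{upper} bound on $\int\eta(-f_L)\,dd^cf_L\wedge\mathcal S$. Also, $\min(\cdot,-L)$ is not quasi-psh.

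A smaller point: in the Cauchy--Schwarz absorption, an $o(1)$ mass factor only yields $o(L^2)$. You need $\mu(\{-2L<\rho<-L\}\cap\operatorname{supp}\eta)=o(1/L)$, which does follow from $L\,\mu(\{-2L<\rho<-L\})\le\int_{\{\rho<-L\}}|\rho|\,d\mu$.

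\textbf{How to close it.} Put \emph{both} derivatives on $\eta$, so that neither first-order terms nor level-set measures appear. This is the paper's argument.
Take a smooth convex $q$ with $q'=0$ on $(-\infty,-3]$, $q(t)=t$ on $[-1/2,\infty)$, and $q''\ge c>0$ on $[-2,-1]$, and set $q_L=Lq(\rho/L)$. Then $L^{-1}\int\eta\,q''(\rho/L)\,d\rho\wedge d^c\rho\wedge\mathcal S=\int q_L\,dd^c\eta\wedge\mathcal S+\int\eta\,q'(\rho/L)\,\Theta\wedge\mathcal S$.
By dominated convergence, using $|q_L|\le|\rho|\in L^1(\mu)$ and $\mu(D)=0$, the right side tends to $\int\rho\,dd^c\eta\wedge\mathcal S+\int\eta\,\Theta\wedge\mathcal S$. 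This limit is $0$ precisely by your Step 2 identity $dd^c(\rho\mathcal S)=-\Theta\wedge\mathcal S$. Since the left side dominates $cL^{-1}J(L)$, you get $J(L)=o(L)$.
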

The above \eqref{eq:energy} will be used to show the remained 
estimates (of \eqref{eq:good-cutoffs}) for cutoff function. 
\begin{proof}
From \cite[Chapter III, Proposition 3.11]{Demailly} 
(which in turn follow from integral by parts) 
it follows that $\int_K |\rho|d\mu<\infty$, so that $\rho \mathcal{S}$ 
is well-defined as a current. 

Locally write $\rho=\log|f_D|^2-\varphi$, where $f_D$
defines $D$ and $\varphi$ is smooth with $dd^c\varphi=\Theta$.
Since $u=(r^2/2)\circ\pi$ is locally bounded and psh,
and the unbounded locus of $\log|f_D|^2$ has real dimension
$2n-2$, \cite[Proposition 3.2(ii)]{AnderssonWulcan}
and the Poincar\'e-Lelong formula give
\begin{align}\label{eq:PLS}
dd^c(\rho\mathcal S)
=2\pi[D]\wedge\mathcal S-\Theta\wedge\mathcal S.
\end{align}
Here $[D]\wedge\mathcal S$ is the Bedford-Taylor product
defined recursively from $[D]$ as follows. 
A key point here is to show its vanishing, by using that $D$ is 
exceptional divisor. For that, 
we concentrate 
around each component $D_i$ of $D$, 
but by summing up, we obtain the desired conclusion. 
For each $D_i$, put $u=(r^2/2)\circ\pi$, using the given psh representative. 
An iterative Bedford-Taylor product gives 
\[
Q_0=[D_i],\qquad Q_{j+1}=dd^c(uQ_j),
\]
with $Q_{n-1}=[D_i]\wedge \mathcal{S}$. 
We show inductively that their pushforwards vanish. 

First, $\pi_*Q_0=0$, since
$\operatorname{codim}_C\pi(D_i)\ge2$. 
Suppose $\pi_*Q_j=0$ for some fixed $j$. 
For a compactly supported positive test form downstairs, choose $M$ bounding  $u$ from above on its compact inverse-image support. 
Since $Q_j$ is positive and $0\le u\le M$ on $\pi^{-1}(\operatorname{supp}\eta)$, we have
\[
0\le\langle\pi_*(uQ_j),\eta\rangle
=\langle uQ_j,\pi^*\eta\rangle
\le M\langle Q_j,\pi^*\eta\rangle
=M\langle\pi_*Q_j,\eta\rangle=0.
\]
Hence $\pi_*(uQ_j)=0$, and commutation of proper pushforward with $dd^c$ gives $\pi_*Q_{j+1}=0$. The last current $Q_{n-1}=[D_i]\wedge\mathcal S$ is a positive measure. A nonnegative test function downstairs equal to one on $\pi(K)$ therefore shows $Q_{n-1}(K)=0$ for every compact $K\subset\Omega$. This proves $[D]\wedge\mathcal S=0$ and the identity \eqref{eq:PLS} 
for $dd^c(\rho\mathcal S)$. 

To prove \eqref{eq:energy}, we first prepare an auxiliary 
smooth convex function $q$ with 
\[
0\le q'\le1,\quad q'=0\text{ on }(-\infty,-3],\quad
q(t)=t\text{ on }[-1/2,\infty),\quad q''\ge c>0\text{ on }[-2,-1].
\]
Set $q_L=Lq(\rho/L)$. This is smooth across $D$, since it is constant near $D$, and we have 
\[
q_L\to\rho, |q_L|\le|\rho|. 
\] 
The chain rule computation with 
$\wedge \mathcal{S}$ gives
\[
dd^c(q_L\mathcal S)
=-q'(\rho/L)\Theta\wedge\mathcal S
+L^{-1}q''(\rho/L)d\rho\wedge d^c\rho\wedge\mathcal S.
\]
Choose $(0\le)\eta\in C_c^\infty(\Omega)$, which is equal to $1$ near $K$. Using the above formula, the integration of the second term of the 
right hand side against $\eta$ gives 
\begin{equation}\label{eq:convex-energy}
\begin{aligned}
L^{-1}\int_\Omega\eta q''(\rho/L)d\rho\wedge d^c\rho\wedge\mathcal S
&=\int_\Omega q_Ldd^c\eta\wedge\mathcal S
+\int_\Omega\eta q'(\rho/L)\Theta\wedge\mathcal S\\
&\longrightarrow\int_\Omega\rho\,dd^c\eta\wedge\mathcal S
+\int_\Omega\eta\Theta\wedge\mathcal S=0.
\end{aligned}
\end{equation}
Both limits follow by dominated convergence, using $|\rho|\in L^1(\mu)$ on the test support and $\mu(D)=0$. 
The last equality follows by integration by parts
for currents, using
$dd^c(\rho\mathcal S)=-\Theta\wedge\mathcal S$
proved above. 
Since the left hand side is nonnegative and $q''\ge c>0$ where $-2L<\rho<-L$, equation \eqref{eq:energy} follows.
\end{proof}

Choose a smooth nondecreasing $h_{\mathrm{cut}}:\R\to[0,1]$, zero on $(-\infty,-2]$ and one on $[-1,\infty)$, and set
\[
\widetilde\chi_L:=h_{\mathrm{cut}}(\rho/L),
\]
which is a priori defined in the domain of $\rho$. However, 
it naturally extends smoothly by $0$ near $D$ and tends to $1$ 
outside $D$. All its derivatives are supported in the ``transition 
region" 
$E_L=\{-2L<\rho<-L\}$. By the chain rule, we have 
\begin{align}\label{eq:ddchiL_chainrule}
dd^c\widetilde\chi_L
=-L^{-1}h_{\mathrm{cut}}'(\rho/L)\Theta
+L^{-2}h_{\mathrm{cut}}''(\rho/L)d\rho\wedge d^c\rho.
\end{align}
For $K=K_{a,b}$ put
$J_L=\int_{K\cap E_L}d\rho\wedge d^c\rho\wedge\mathcal S=o(L)$.
On $E_L$, \eqref{eq:F-growth} gives $\widetilde F\le CL$ for some $C>0$. 
Since $h_{\rm cut}'$ and $h_{\rm cut}''$ are bounded
and supported in $[-2,-1]$, \eqref{eq:ddchiL_chainrule}
implies
\begin{equation}\label{eq:cutoffbound1}
\int_{K\setminus D}\widetilde F\,
\bigl|dd^c\widetilde\chi_L\wedge\mathcal S\bigr|
\le C\mu(K\cap E_L)+CL^{-1}J_L
\longrightarrow0.
\end{equation}
Here the absolute value denotes the total variation
of a signed measure. The first term tends to zero
since $\mu$ is locally finite and $\mu(D)=0$,
while the second tends to zero by $J_L=o(L)$
from Lemma \ref{lem:energy}.

Similarly, the chain rule gives
$d\widetilde\chi_L
=L^{-1}h_{\rm cut}'(\rho/L)d\rho$.
Using the K\"ahler energy identity, we obtain
\begin{equation}\label{eq:cutoffbound2}
\int_{K\setminus D}
|\nabla\widetilde\chi_L|_{\pi^*g}^2\,
\pi^*dV_g
\le CL^{-2}J_L=o(L^{-1}).
\end{equation}

The above preparations are enough to ensure the necessary 
estimates for the cutoff functions, which complete the 
proof of Theorem \ref{thm:bochner}. 

\begin{proof}[Completion of the proof of Theorem \ref{thm:bochner}: 
confirming \eqref{eq:good-cutoffs}]
Recall that we already proved that Theorem 
\ref{thm:bochner} follows once these $\chi_L$ satisfy 
\eqref{eq:good-cutoffs}, in \S \ref{sec:bochner-cutoff}. 

Now, since $\pi$ is an isomorphism over the regular
locus and $\mathcal S=\pi^*\omega^{n-1}$ there,
the identity
\[
(\Delta_d\chi_L)dV_g
=-\frac{2}{(n-1)!}\,
dd^c\chi_L\wedge\omega^{n-1}
\]
shows that \eqref{eq:cutoffbound1} gives the
Laplacian estimate in \eqref{eq:good-cutoffs}.
Likewise, \eqref{eq:cutoffbound2} gives the
gradient estimate there.
The remaining condition $F\in L^2$ was proved
at the end of \S \ref{sec:lower},
using \eqref{eq:F-growth}. 
Summarizing, 
these complete the proof of Theorem \ref{thm:bochner}. 
\end{proof}

This completes the analytic preparation for later 
\S \ref{sec:flag}. We next
prepare the independent Poisson-deformation tool of \S \ref{sec:PD};
both materials are used in the main argument of \S \ref{sec:blocks}.

\section{Poisson cohomology and splitting of deformations}\label{sec:PD}
This section provides an independent algebraic preparation for
the core argument in \S \ref{sec:blocks} from  
Poisson cohomology and its deformation theoretic application. 

\subsection{Some review of Harrison-Poisson cohomology}
\label{sec:PD-cohomology}
Below, in this subsection, 
tensor products without subscripts are over $\C$.
\paragraph{Harrison-Poisson cohomology.}
Poisson cohomology for general Poisson algebras
was developed by Fresse \cite{Fresse} using Harrison cochains \cite{PXHarrison};
see also Ginzburg-Kaledin \cite[Appendix A]{GK}
and Namikawa \cite[\S 2]{NamikawaFlops}. 

We use the full version of the Harrison-Poisson complex
$(\DP^\bullet(T),D)$ of \cite[Appendix A]{GK},
whose differential combines the multiplication and Poisson bracket,
and write
\begin{equation}\label{eq:HP-convention}
 \HP^i(T):=H^i(\DP^\bullet(T),D).
\end{equation}
In low degrees, its cochains are
\[
 \DP^0(T)=T,\qquad
 \DP^1(T)=\Hom_\C(T,T),
\]
\[
 \DP^2(T)=
 \Hom_\C(\Sym^2_\C T,T)
 \oplus\Hom_\C(\bigwedge^2_\C T,T).
\]
Thus a degree-two cochain is a pair $(\mu,\nu)$,
where $\mu$ is symmetric and $\nu$ is alternating,
corresponding to first-order variations of the multiplication
and Poisson bracket, respectively. 
For a unital Poisson algebra, a Harrison-Poisson cochain is
called \emph{normalized} if its value is zero whenever at least
one of its inputs is the unit element $1$.
In degree $1$, this condition simply means $u(1)=0$.
We will occasionally work with such cochains. 
The normalized subcomplex is known to be 
quasi-isomorphic to 
the full complex, by the standard normalization 
of Hochschild cochains \cite{PXHochschild} and its compatibility 
with the Harrison-Poisson differential 
(cf.\ \cite[Chapter X]{MacLaneHomology} and
\cite[Appendix A]{GK}). 
In particular, every class in $\HP^2(T)$ admits
a representative $(\mu,\nu)$ with
$\mu(1,a)=\nu(1,a)=0$ for all $a\in T$.

In lowest degrees, we have 
\begin{equation}\label{eq:HP-low}
 \HP^0(T)=\ZP(T),\qquad
 \HP^1(T)=\Der^{\Pois}(T)/\{X_f\mid f\in T\},
 \qquad X_f=\{f,-\}.
\end{equation}
Here $\ZP(T)$ is the vector space of Casimir elements and
$\Der^{\Pois}(T)$ denotes the vector space of 
derivations which preserves Poisson structures.

\paragraph{First-order Poisson deformations and coordinate changes.}
A marked Poisson deformation over a local Artin ring $\Lambda$
is a flat Poisson $\Lambda$-algebra together with an identification
of its central fiber with $T$.

Consider first $\Lambda=\C[\varepsilon]/(\varepsilon^2)$.
After choosing an identification of the underlying
$\Lambda$-module with $T\otimes_\C\Lambda$,
the deformed multiplication and Poisson bracket take the form
\begin{equation}\label{eq:mpi}
 m_\varepsilon(x,y)=xy+\varepsilon\mu(x,y),
 \qquad
 \pi_\varepsilon(x,y)=\{x,y\}+\varepsilon\nu(x,y),
\end{equation}
where $\mu$ is symmetric and $\nu$ is alternating.
The first-order parts of associativity, the Leibniz rule,
and the Jacobi identity are precisely the cocycle condition
$D(\mu,\nu)=0$.

A change of the chosen module identification,
preserving the marked central fiber, has the form
$\mathrm{id}+\varepsilon u$ for a linear map $u:T\to T$. 
Its effect on the two operations is described by the differential $D$ as 
\begin{equation}\label{eq:gauge-differential}
\begin{aligned}
 (Du)_m(x,y)
   &=x\,u(y)-u(xy)+u(x)\,y,\\
 (Du)_\pi(x,y)
   &=\{x,u(y)\}-\{y,u(x)\}-u(\{x,y\}).
\end{aligned}
\end{equation}
If two deformations differ by 
$(\mu',\nu')-(\mu,\nu)=Du$, then 
$\mathrm{id}-\varepsilon u$ is a Poisson isomorphism
from the first deformation to the second.
Thus first-order marked Poisson deformations
up to isomorphism are classified by $\HP^2(T)$. 
See \cite{GSsurvey}, \cite[Appendix A.8]{GK} 
and \cite[proof of Proposition 5(1)]{NamikawaFlops} 
for the details. 

\paragraph{Small extensions.}
Let $\Lambda'\twoheadrightarrow\Lambda$ have kernel $I$ with
$\m_{\Lambda'}I=0$. For tentative lifts $M,P$ of the two operations,
fix the defect conventions
\begin{equation}\label{eq:defects}
\begin{aligned}
 \mathsf A(x,y,z)&=M(M(x,y),z)-M(x,M(y,z)),\\
 \mathsf L(x,y,z)&=P(M(x,y),z)-M(x,P(y,z))-M(y,P(x,z)),\\
 \mathsf J(x,y,z)&=P(P(x,y),z)+P(P(y,z),x)+P(P(z,x),y).
\end{aligned}
\end{equation}
The deformation formalism of \cite[Lemma A.14]{GK} gives an obstruction
class in $I\otimes\HP^3(T)$ from these defects; it is independent of
the tentative lifts and vanishes precisely when a Poisson lift exists.
Differences of genuine lifts are degree-two cocycles, and are removed
by a marked coordinate change precisely when their $\HP^2$-class is
zero. All changes are linear 
in the $I$-valued corrections because $I^2=0$. 
With \eqref{eq:gauge-differential}, if
$\text{target}-\text{model}=Du$, then $\id-u$ is an isomorphism from
the model to the target, with inverse $\id+u$.

\subsection{The Ginzburg-Kaledin K\"unneth formula}
\label{sec:PD-kunneth}
For Poisson algebras $A,B$ put $R=A\otimes B$, with product bracket
\begin{equation}\label{eq:product-bracket}
 \{a\otimes b,a'\otimes b'\}
 =\{a,a'\}_A\otimes bb'+aa'\otimes\{b,b'\}_B.
\end{equation}
We use the following K\"unneth type formulae in \cite[Lemma A.5 and
Proposition A.11(i)]{GK}: the natural comparison
of full Harrison-Poisson complexes gives a quasi-isomorphism 
\begin{equation}\label{eq:product-complexes}
 \DP^\bullet(A)\otimes\DP^\bullet(B)\simeq\DP^\bullet(R),
\end{equation}
and the induced natural isomorphism of commutative algebras: 
\begin{equation}\label{eq:kunneth}
 \bigoplus_{p+q=n}\HP^p(A)\otimes\HP^q(B)
 \xrightarrow{\ \sim\ }\HP^n(R).
\end{equation}

For a special case of $n=2$, further 
if $\pi_B=0$, $\HP^0(A)=\C$ and $\HP^1(A)=0$, then 
\eqref{eq:kunneth} gives
\begin{equation}\label{eq:HP2}
 \HP^2(A\otimes B,\pi_A+0)
 \simeq\HP^2(A)\otimes B\oplus\HP^2(B,0).
\end{equation}
Under the same hypotheses, $\HP^0(B,0)=B$ and
$\HP^1(B,0)=\Der_\C(B)$: all Hamiltonian derivations of the zero
bracket vanish. The degree-three case of \eqref{eq:kunneth} gives
\begin{equation}\label{eq:HP3}
 \begin{aligned}
 \HP^3(A\otimes B,\pi_A+0)
 &\simeq \HP^3(A)\otimes B\\
 &\qquad\oplus(\HP^2(A)\otimes\Der_\C(B))
             \oplus\HP^3(B,0).
 \end{aligned}
\end{equation}
We use \eqref{eq:HP2} to compare genuine lifts in
Theorem \ref{thm:SPD}, and the middle summand of \eqref{eq:HP3} 
in Step 2 of the proof of Theorem \ref{thm:SPD} 
to confirm Casimir property of some lift. 

As its preparation, 
here we record explicitly the mixed summand of the
K\"unneth isomorphism \eqref{eq:HP3}.
Below, 
${\rm pr}_{2,1}$ denotes 
the projection from the left hand side of \eqref{eq:HP3} to 
the middle summand of 
\eqref{eq:HP3} and 
\[
\Phi_{2,1}:
\HP^2(A)\otimes\Der_\C(B)
\longrightarrow \HP^3(A\otimes B)
\]
denote the restriction of the inverse of
\eqref{eq:HP3} to its middle summand. 
Then, 
for a degree-two cocycle
$\eta=(\mu,\nu)$ on $A$ which maps $1$ to $0$, and a derivation
$c\in\Der_\C(B)$, 
$$\Phi_{2,1}([\eta]\otimes c)=[\Omega_{\eta,c}]$$ 
holds, with the degree-three cocycle 
\[
\Omega_{\eta,c}
=(\mathsf A_{\eta,c}=0,
  \mathsf L_{\eta,c},
  \mathsf J_{\eta,c}),
\]
where for $x_i=a_i\otimes b_i$, 
\begin{equation}\label{eq:mixed-product-representative}
 \begin{aligned}
 \mathsf L_{\eta,c}(x_1,x_2,x_3)
  &=\mu(a_1,a_2)a_3\otimes b_1b_2c(b_3),\\
\mathsf J_{\eta,c}(x_1,x_2,x_3)
&=\nu(a_1,a_2)a_3\otimes b_1b_2c(b_3)\\
&+\nu(a_2,a_3)a_1\otimes b_2b_3c(b_1)\\
&+\nu(a_3,a_1)a_2\otimes b_3b_1c(b_2).
\end{aligned}
\end{equation}
These are the low-degree expressions of the product comparison in
\cite[Appendix A.3, (A.10)-(A.12)]{GK}, in the multiplication-and-bracket
convention fixed above. In the case 
$x=a\otimes1$, $y=a'\otimes1$, $z=1\otimes b$, its components are 
\[
 \mathsf L_{\eta,c}(x,y,z)=\mu(a,a')\otimes c(b),\qquad
 \mathsf J_{\eta,c}(x,y,z)=\nu(a,a')\otimes c(b).
\]
Thus a sum of these representatives has mixed component
$\sum_j[\eta_j]\otimes c_j$ under $\pr_{2,1}$.

\subsection{Poisson deformations of conical symplectic 
varieties}\label{sec:PDreview}

The following is a consequence of
\cite[Corollary 2.5, Theorem 5.1 and \S5.4]{NamikawaAffine},
\cite[Theorem 2.1 and Corollary 2.4]{NamikawaNotes},
and the equivariant theory of \cite{Rim}.

\begin{theorem}[cf., \cite{Rim, NamikawaAffine, NamikawaNotes}]
\label{prop:namikawa}
Let $Y$ be a connected conical symplectic variety,
equipped with a torus action and a positive Reeb vector $\xi$
for which the symplectic form is homogeneous of positive weight.
Put $A=\Gamma(Y,\OO_Y)$.
The formal universal Poisson deformation has a smooth
finite-dimensional base
\[
 D_A=\C[[h_1,\ldots,h_s]],\qquad s=\dim\HP^2(A),
\]
whose coordinates can be chosen homogeneous of positive
$\xi$-weights.
Compatible torus and commuting finite symplectic group actions
on $Y$ lift to the family.
\end{theorem}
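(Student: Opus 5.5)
The plan is to assemble the statement from the cited sources rather than reprove deformation theory. We take $A=\Gamma(Y,\OO_Y)$ with its Poisson bracket induced by the homogeneous symplectic form, and proceed in three steps.

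\textbf{Step 1: unobstructedness and finite-dimensionality.} First we verify that $\HP^2(A)$ is finite dimensional and that the Poisson deformation functor $\mathrm{PD}_A$ of \S\ref{sec:PD-cohomology} is unobstructed.

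For finite dimensionality, we use \cite[Corollary 2.5]{NamikawaAffine}. For a conical symplectic variety, $\HP^2(A)$ is identified with $H^2(Y^{\reg},\C)$, or with the corresponding de Rham cohomology of a partial resolution. The positivity of the symplectic weight forces the $\xi$-grading on this cohomology to be concentrated in finitely many weights, each with finite-dimensional pieces.

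For unobstructedness, we invoke \cite[Theorem 5.1]{NamikawaAffine}. Its proof is a $T^1$-lifting argument: the $\Lambda$-relative $\HP^2$ is shown to be free and compatible with base change, so the obstruction classes in $I\otimes\HP^3(A)$ from \eqref{eq:defects} vanish. Here we use the version stated in \cite[Theorem 2.1]{NamikawaNotes}, which treats arbitrary conical symplectic varieties rather than ones admitting a crepant resolution.

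\textbf{Step 2: prorepresentability and the formal universal family.} By Schlessinger's criteria, $\mathrm{PD}_A$ has a hull. The hull is prorepresenting because of the vanishing $\HP^1(A)=0$ for positively graded conical symplectic $A$, which is \cite[\S5.4]{NamikawaAffine}; hence infinitesimal automorphisms extend. Unobstructedness from Step 1 then makes the base formally smooth, so it is isomorphic to $\C[[h_1,\ldots,h_s]]$ with $s=\dim\HP^2(A)$.

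\textbf{Step 3: equivariance and weights.} This is the step where Rim's theory \cite{Rim} enters. Let $\Gamma$ be the product of the torus and the commuting finite symplectic group. By equivariance of the cochain complex, $\Gamma$ acts on $\mathrm{PD}_A$.

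For a reductive group acting on a prorepresentable, unobstructed functor, Rim shows that the universal family can be chosen $\Gamma$-equivariantly; the needed linear reductivity holds for tori and finite groups. In this choice the tangent space $\HP^2(A)^*$ is a $\Gamma$-module, and the coordinates $h_i$ can be taken to be dual to a $\Gamma$-eigenbasis.

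The weights of the $h_i$ are positive for the following reason. A class in $\HP^2(A)$ of $\xi$-weight $w$ corresponds to a deformation parameter of weight $-w$ relative to the bracket, whose own weight is $-\langle\tau,\xi\rangle$. Via the identification $\HP^2(A)\simeq H^2(Y^{\reg})$, the classes in $\HP^2(A)$ have weight exactly $-\langle\tau,\xi\rangle$, the negative of the positive weight of the bracket. The dual coordinates $h_i$ therefore carry weight $\langle\tau,\xi\rangle>0$. This is Namikawa's observation.

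\textbf{Main obstacle.} The delicate point is Step 3 when $T$ is a larger torus with an irrational $\xi$: Rim's equivariance must be applied to the whole torus, not only to a $\C^*$. We would handle this by reductivity of $T$, grading every cochain by the lattice $M$ and running the hull construction degree by degree. The fact that each $\xi$-graded piece is finite dimensional then keeps the construction within formal power series.
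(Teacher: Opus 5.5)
Your overall route is the same as the paper's. The paper simply assembles the statement from Namikawa's unobstructedness and prorepresentability results and Rim's equivariant hull theorem for the linearly reductive group generated by the torus and the finite group. It supplies $\HP^1(A)=0$ in Lemma~\ref{lem:HP(A)} and spells out Rim's vanishing conditions in Step~5 of the proof of Theorem~\ref{thm:SPD}.

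There is, however, a genuine gap in Step~3. Your positivity argument rests on the identification $\HP^2(A)\simeq H^2(Y^{\reg},\C)$. That identification requires the singular locus to have codimension at least four, and it fails in the generality of the statement. This matters here, because the factors $V_i$ to which the theorem is applied may have codimension-two singularities. Already for $Y=\C^2/\{\pm1\}=\{xy=z^2\}$, with $x,y,z$ of weight $2$ and $\langle\tau,\xi\rangle=2$:
\begin{itemize}
\item $H^2(Y^{\reg},\C)=H^2(\R\mathbb P^3,\C)=0$;
\item but $\HP^2(A)\simeq\C$, with universal Poisson deformation $xy-z^2=h$, where $h$ has weight $4$.
\end{itemize}
For type $A_r$ the parameters have weights $\langle\tau,\xi\rangle\,d$ for $d=2,\dots,r+1$. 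So your intermediate claim that every $h_i$ has weight exactly $\langle\tau,\xi\rangle$ is false, and positivity is not established by your argument.

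Two smaller slips. Even where the identification holds, your bookkeeping is off: a weight-zero closed $2$-form yields a bivector cocycle of weight $-2\langle\tau,\xi\rangle$, and that is what gives parameters of weight $\langle\tau,\xi\rangle$. Also, finite-dimensionality in Step~1 is a topological/cited fact, not a consequence of the grading.

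The missing step is to do your weight computation where it is valid and push it down.
\begin{enumerate}
\item Take a $\Q$-factorial terminalization $\widetilde Y\to Y$ to which the torus action lifts. Its singular locus has codimension at least four.
\item So its Poisson deformation functor is smooth, with tangent space $H^2$ of its regular locus. The connected torus acts trivially on this cohomology.
\item Via the period map $b\mapsto[\omega_b]$, every coordinate of that base has character $\tau$.
\item Namikawa's comparison in \cite{NamikawaAffine} shows the classifying map from this base to $\operatorname{Spf}D_A$ is finite. It is also equivariant, so $D_A$ embeds equivariantly into a power series ring all of whose variables have character $\tau$.
\item Hence each $T$-eigencoordinate of $D_A$ has character $k\tau$ with $k\ge1$, i.e. $\xi$-weight $k\langle\tau,\xi\rangle>0$.
\end{enumerate}
This also removes your ``main obstacle''. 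Positivity is obtained at the level of $T$-characters, so an irrational $\xi$ causes no difficulty. Rim's theorem applies directly to the whole linearly reductive group, with no degree-by-degree construction needed.
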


We shall also later use the following. 
\begin{lemma}\label{lem:HP(A)}
For the coordinate ring $A$ of a conical symplectic variety as 
above, 
\begin{equation}
 \HP^0(A)=\C,\qquad \HP^1(A)=0.
\end{equation}
\end{lemma}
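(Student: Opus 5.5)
We must prove that $\HP^0(A)=\C$ and $\HP^1(A)=0$ for $A=\Gamma(Y,\OO_Y)$, with $Y$ a connected conical symplectic variety whose symplectic form $\sigma$ is homogeneous of positive $\xi$-weight $\ell>0$. Both statements use the description \eqref{eq:HP-low}. The plan is to work on the regular locus $Y^{\reg}$, where the Poisson bracket comes from $\sigma$. Normality of $Y$ together with $\codim(Y\setminus Y^{\reg})\ge2$ then lets us pass from $Y^{\reg}$ back to $A$. The grading gives the rest.

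\textbf{Casimirs.} Let $f\in\ZP(A)$. Then the Hamiltonian field $X_f$ vanishes on $Y^{\reg}$. Since $\sigma$ is nondegenerate there, $df=\iota_{X_f}\sigma=0$, up to sign conventions, on the connected manifold $Y^{\reg}$. Here connectedness follows from normality and irreducibility. Hence $f$ is constant on $Y^{\reg}$, and therefore on $Y$. This gives $\HP^0(A)=\C$.

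\textbf{Poisson derivations.} Let $D\in\Der^{\Pois}(A)$. Decompose $D=\sum_k D_k$ into $T$-weight components, or $\xi$-weight components. These are finitely or at least formally many, and each $D_k$ is again a Poisson derivation because the bracket is homogeneous. So we may assume $D$ is homogeneous of $\xi$-weight $k$. This means $[E,D]=kD$, where $E$ is the Euler field acting by weights.

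The bracket has $\xi$-weight $-\ell$, since $\{A_a,A_b\}\subset A_{a+b-\ell}$. Consequently
\[
\{Ef,g\}+\{f,Eg\}-E\{f,g\}=\ell\{f,g\}.
\]
On $Y^{\reg}$, $D$ is a vector field preserving the Poisson bivector, so $L_D\sigma=0$. By Cartan's formula, $\iota_D\sigma$ is a closed $1$-form. The Euler field satisfies $L_E\sigma=\ell\sigma$, so $\iota_E\sigma$ is a primitive of $\ell\sigma$.

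Now compute
\[
L_E(\iota_D\sigma)=\iota_{[E,D]}\sigma+\iota_D L_E\sigma=(k+\ell)\,\iota_D\sigma.
\]
Since $\iota_D\sigma$ is closed, this gives $d(\iota_E\iota_D\sigma)=(k+\ell)\,\iota_D\sigma$. Put
\[
h:=\iota_E\iota_D\sigma=\sigma(D,E),
\]
a regular function on $Y^{\reg}$ of weight $k+\ell$.

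\textbf{Case $k+\ell\neq0$.} Then $\iota_D\sigma=d\bigl(h/(k+\ell)\bigr)$ is exact, so $D$ is Hamiltonian on $Y^{\reg}$ with Hamiltonian $h/(k+\ell)$. This function lies in $\Gamma(Y^{\reg},\OO)=A$ by normality. Thus $D=X_{h/(k+\ell)}$, up to sign, as derivations of $A$, because both sides agree on the dense open $Y^{\reg}$.

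\textbf{Case $k+\ell=0$.} The derivation $D$ lowers weights by $\ell$. Moreover, $\iota_D\sigma$ is a closed $E$-invariant $1$-form with $\iota_E\iota_D\sigma$ of weight $0$, hence constant by $A_0=\C$. Write $\iota_D\sigma=\beta$. Then $\beta(E)=c$ is constant and $L_E\beta=0$.

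I would argue $\beta=0$ as follows. Evaluate $\beta$ on weight vectors. For a homogeneous $f$ of weight $a>0$, the function $D f$ has weight $a-\ell$. Choose generators of $A$ of minimal positive weight; derivations of negative weight then kill sufficiently low-weight generators. The cleanest route is to note that $D$ is determined by the closed $1$-form $\beta$, which is $E$-invariant of weight $0$. By the conical contraction $\lim_{s\to0}$, pull $\beta$ back along $\delta_s$, a holomorphic $\C^*$-action when quasi-regular, and the torus action in general. The weight-$0$ invariant closed $1$-forms on a cone contracting to a point are $d$ of weight-$0$ functions, hence zero, after using the extension of reflexive forms to a resolution (\cite{GKKP}) and the contractibility of $Y$.

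This step is the main obstacle: in weight $k=-\ell$ the Euler-field homotopy degenerates. I would handle it via Namikawa's result that $H^1(Y^{\reg},\C)$ contributions vanish in negative weight, or directly via weights: a weight-$(-\ell)$ derivation $D$ with $\iota_D\sigma=\beta$ requires $\beta$ to be a form of weight $0$, and reflexive $1$-forms on $Y$ have strictly positive weights because $\Omega^{[1]}_Y$ is generated by $df$ with $f$ of positive weight, modulo torsion-free extension.

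Either way $D=0$ in this case, so every Poisson derivation is Hamiltonian and $\HP^1(A)=0$.
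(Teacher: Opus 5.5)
Your arguments for $\HP^0(A)=\C$ and for the weight components with $k+\ell\neq0$ are correct and coincide with the paper's: there the closed form $\beta=\iota_v\sigma$ is split into weight pieces, and $\beta_w=d(\iota_{E_0}\beta_w/w)$ for $w\neq0$. The genuine gap is the case $k+\ell=0$. You flag it as the main obstacle but do not close it.

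\begin{itemize}
\item \emph{Exactness is assumed, not proved.} The claim that invariant closed $1$-forms ``on a cone contracting to a point are $d$ of weight-$0$ functions'' presupposes that $\beta=\iota_D\sigma$ is exact on $Y^{\reg}$, which is precisely what must be shown. Contractibility of $Y$ says nothing about $H^1(Y^{\reg},\C)$. Extending $\beta$ to a resolution only places its class in the cohomology of a space retracting onto the exceptional fibre, whose $H^1$ you have not controlled.
\item \emph{The ``direct weight'' argument is invalid.} $\Omega^{[1]}_Y$ is a reflexive hull and is not generated by the $df$, so positivity of weights does not follow. In fact the conclusion fails for normal cones in general. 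Take the cone $Y$ over an elliptic curve $E$: it is normal, has a good $\C^*$-action and an isolated singularity. The pullback of $dz$ along $Y\setminus\{o\}\to E$ is a closed reflexive $1$-form of weight $0$ with nonzero periods.
\item \emph{What this shows.} Your argument uses only normality and the grading, so it cannot rule out this example. Some input about the singularity type is indispensable here.
\end{itemize}

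The missing ingredient is finiteness of $\pi_1(Y^{\reg})$. The paper obtains it from Braun's theorem \cite{Braun}, since $Y$ is klt; Namikawa's finiteness theorem for fundamental groups of symplectic singularities would serve equally well. The weight-zero argument then runs as follows, using the Euler field $E_0$ of a good integral $\C^*$-action so that there is an honest contracting action, as the paper does.

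\begin{enumerate}
\item The period homomorphism $[\gamma]\mapsto\int_\gamma\beta$ vanishes on the finite group $\pi_1(Y^{\reg})$. Hence $\beta=dh$ for a holomorphic $h$ on $Y^{\reg}$, and $h$ extends to $Y$ by normality.
\item Since $\beta$ has weight $0$, $d(E_0h)=L_{E_0}\beta=0$, so $E_0h$ is constant. It vanishes at the fixed vertex $o$, so $E_0h=0$.
\item Thus $h$ is invariant under the contracting action, and continuity at $o$ forces $h$ to be constant.
\item Hence $\beta=0$. Since $\sigma$ is nondegenerate on $Y^{\reg}$, $D=0$ in this weight, which completes $\HP^1(A)=0$.
\end{enumerate}
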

\begin{proof}
$\HP^0(A)=\C$ follows from the connected dense 
symplectic 
locus $U:=Y^{\reg}$. For the vanishing of 
$\HP^1(A)$, let $v$ be a Poisson
derivation and put $\beta=\iota_v\sigma$, where $\sigma$ is the
symplectic form on $U$. Since $\mathcal L_v\sigma=0$ and
$d\sigma=0$, Cartan's formula gives $d\beta=0$.
Choose a good integral $\C^*$-action in the given torus,
with Euler field $E_0$, and decompose the algebraic form
into its finitely many weight components 
$\beta=\sum_w\beta_w$ with $d\beta_w=0.$ 
For $w\ne0$, Cartan's formula yields 
$\beta_w=d\!\left(\frac{\iota_{E_0}\beta_w}{w}\right).$ 
The primitive is regular on $U$ and extends to an element of $A$
by normality. Since $Y$ is klt, \cite[Theorem 1]{Braun} therefore 
implies 
that $\pi_1(U)$ is finite.
Thus the period homomorphism 
$[\gamma]\mapsto\int_\gamma\beta_0$ is zero,
and integration gives a single-valued holomorphic function $h$
on $U$ with $dh=\beta_0$. 
Normality extends $h$ to $Y$ as a holomorphic (analytic) function. 
Since $d(E_0h)=\mathcal L_{E_0}(dh)=\mathcal L_{E_0}\beta_0=0,$ 
hence $E_0h$ is constant; evaluating at the fixed point $o$
shows that this constant is zero. 
Consequently $h$ is invariant under the contracting action,
and continuity at $o$ gives $h(x)=h(o)$ for every $x$.
Thus $\beta_0=0$. 
Together with the regular primitives of the nonzero-weight
components, this shows that $\beta=df$ for some $f\in A$.
Every Poisson derivation is therefore Hamiltonian,
proving $\HP^1(A)=0$. 
\end{proof}

\subsection{On Poisson deformations of (tensor) products}\label{sec:SPD}
Write $\cU_A$ for the flat formal Poisson $D_A$-algebra with central fiber $A$. The $h_j$ are coordinates on the base, not elements of $A$. 
Note that $\mathcal{U}_A\simeq A[[h_1,\cdots,h_s]]$ 
as $D_A$-module 
by Nakayama's lemma. 
If $m_h,\Theta_h$ are its two operations under such a fixed $D_A$-module 
trivialization of $\mathcal U_A$, 
the multiplication and Poisson bracket can be expanded as
\begin{equation}\label{eq:universal-expansion}
\begin{aligned}
m_h(a,a')
 &=aa'+\sum_{j=1}^s h_j\mu_j(a,a')
   +\sum_{|\alpha|\ge2}h^\alpha m_\alpha(a,a'),\\
\Theta_h(a,a')
 &=\pi_A(a,a')+\sum_{j=1}^s h_j\nu_j(a,a')
   +\sum_{|\alpha|\ge2}h^\alpha\Theta_\alpha(a,a').
\end{aligned}
\end{equation}
Here $\alpha\in\mathbb Z_{\ge0}^s$ is a multi-index,
$h^\alpha=\prod_jh_j^{\alpha_j}$, and the first-order
coefficients $(\mu_j,\nu_j)$ represent the Kodaira-Spencer
classes. Thus, their derivatives represent the Kodaira-Spencer type 
basis 
\begin{equation}\label{eq:KS}
\kappa_j=\bigl[(\partial_{h_j}m_h|_0,\partial_{h_j}\Theta_h|_0)\bigr]\in\HP^2(A).
\end{equation}
The tangent vectors $\partial/\partial h_j$, rather than the functions $h_j$, map to these classes.

Below, we consider a deformation of the tensor product of 
Poisson algebra in the form 
$(A\otimes B,\pi_A+0_B)$, 
where 
$A,B$ are finitely generated Poisson algebras, with zero bracket on 
$B$. As a priori special ``product" type deformation, we can 
consider the following deformation: 
for a Poisson deformation $\cB$ of $(B,0_B)$ over $\C[[t]]$,  
we consider a homomorphism 
\begin{equation}\label{eq:parameter-map}
\varphi:D_A\longrightarrow\cB,\qquad h_j\longmapsto f_j, 
\end{equation}
and then associate the deformation 
$$\cU_A\cten_{D_A,\varphi}\cB,$$ 
where the completed tensor product means $t$-adic completion of the 
usual tensor product as algebras. 
The following shows that, 
under mild assumptions, this is the form any Poisson 
deformation of the tensor product $(A\otimes B,\pi_A+0_B)$ should 
take. 

\begin{theorem}[Splitting of Poisson deformations]\label{thm:SPD}
Under the above setup, 
assume $\HP^0(A)=\C$, $\HP^1(A)=0$, and there is 
a smooth finite-dimensional 
formal miniversal family $\cU_A$ over $D_A$. Every marked $t$-adically complete and separated flat formal Poisson deformation $\cD$ over $\C[[t]]$ of $(A\otimes B,\pi_A+0_B)$ is isomorphic to
\begin{equation}\label{eq:SPD}
\cD\simeq\cU_A\cten_{D_A,\varphi}\cB,\qquad
\varphi(h_j)=f_j\in t\cB\cap\ZP(\cB),
\end{equation}
where $\cB$ is a flat formal Poisson deformation of $(B,0)$. The isomorphism is the identity on the marked central fiber. 

If compatible torus and commuting finite-group actions preserve 
the Poisson subalgebras $A, B$ of $(A\otimes B,\pi_A+0_B)$ 
and the given deformation $\mathcal{D}$, the presentation can be  chosen equivariantly. A torus may scale the bracket and move $t$; the finite group preserves the bracket. The map \eqref{eq:parameter-map} and the induced model action then respect these same actions.
\end{theorem}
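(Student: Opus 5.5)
We construct the isomorphism order by order in $t$, using the small-extension formalism of \S\ref{sec:PD-cohomology} and the K\"unneth decompositions \eqref{eq:HP2}, \eqref{eq:HP3}. Put $\Lambda_k=\C[t]/(t^{k+1})$ and $\cD_k=\cD\otimes\Lambda_k$. We prove by induction on $k$ that there are a flat Poisson deformation $\cB_k$ of $(B,0)$ over $\Lambda_k$, elements $f_{j,k}\in t\cB_k\cap\ZP(\cB_k)$, and a marked Poisson isomorphism $\Psi_k:\cU_A\cten_{D_A,\varphi_k}\cB_k\to\cD_k$ with $\varphi_k(h_j)=f_{j,k}$. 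We require these data to be compatible with the restrictions $k\mapsto k-1$. Passing to the inverse limit, using $t$-adic completeness and separatedness of $\cD$, gives \eqref{eq:SPD}. The case $k=0$ is trivial: take $\cB_0=B$ and $f_{j,0}=0$.

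\textbf{Inductive step.} Suppose the data are constructed at level $k$. Consider the small extension $\Lambda_{k+1}\to\Lambda_k$ with kernel $I=t^{k+1}\C$. The given $\cD_{k+1}$ is one Poisson lift of $\cD_k\simeq\cU_A\cten\cB_k$. We first produce a \emph{model} lift of the same form. Lift $\cB_k$ to any flat commutative $\Lambda_{k+1}$-algebra $\cB'_{k+1}$ with zero bracket; this is unobstructed in the Poisson sense except possibly for the commutative obstruction in $\HP^3(B,0)$. Then lift the $f_{j,k}$ arbitrarily to $f'_j$. The tentative tensor-product operations on $\cU_A\cten_{\varphi'}\cB'_{k+1}$ have defects $\mathsf A,\mathsf L,\mathsf J$ as in \eqref{eq:defects}. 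Their obstruction class lies in $I\otimes\HP^3(A\otimes B)$, which splits by \eqref{eq:HP3}.

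The component in $\HP^3(A)\otimes B$ vanishes, because $\cU_A$ is a genuine Poisson family. The component in $\HP^3(B,0)$ is the obstruction for $\cB$ alone. The mixed component in $\HP^2(A)\otimes\Der_\C(B)$ is computed from the explicit representatives \eqref{eq:mixed-product-representative}. Its $I$-part is $\sum_j\kappa_j\otimes\{f'_j,-\}_{\cB'}$, read modulo $t^{k+2}$; here \eqref{eq:KS} is used to identify $\kappa_j$. This is the precise sense in which $\ZP$ enters.

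The existence of the actual lift $\cD_{k+1}$ shows that the total obstruction vanishes. Because the model's defects project componentwise, each of the three summands must vanish separately. The $\HP^3(B,0)$-part then lets us correct $\cB'_{k+1}$ to a genuine Poisson deformation $\cB_{k+1}$. Since $\kappa_j$ is a basis of $\HP^2(A)$, the mixed part forces each derivation $\{f'_j,-\}$ to be zero on $B\otimes I$. After correcting $f'_j$ by elements of $I\otimes B$, we obtain $f_{j,k+1}\in\ZP(\cB_{k+1})$.

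\textbf{Comparing lifts.} The model lift $\cU_A\cten_{\varphi_{k+1}}\cB_{k+1}$ and $\cD_{k+1}$ are now two genuine lifts of the same $\cD_k$. Their difference is a class in $I\otimes\HP^2(A\otimes B)=I\otimes(\HP^2(A)\otimes B\oplus\HP^2(B,0))$, by \eqref{eq:HP2}. We absorb each part into the model:

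\begin{itemize}
\item A component $\sum_j\kappa_j\otimes b_j$ is absorbed by replacing $f_{j,k+1}$ with $f_{j,k+1}+t^{k+1}b_j$. Perturbing the parameter by $\varepsilon b_j$ along $\partial_{h_j}$ changes the first-order class by exactly $\kappa_j\otimes b_j$. One must check that $t^{k+1}b_j$ is Casimir, which holds since the bracket on $\cB$ vanishes modulo $t$.
\item A component in $\HP^2(B,0)$ is absorbed by changing $\cB_{k+1}$ by the corresponding first-order deformation of $B$.
\end{itemize}

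After these changes the difference class is zero. A gauge transformation $\id-u$ as in \eqref{eq:gauge-differential} then gives $\Psi_{k+1}$ restricting to $\Psi_k$. Here we need $\HP^1(A)=0$ and $\HP^0(A)=\C$, which by K\"unneth make $\HP^1(A\otimes B)=\Der_\C(B)$. This ensures that the remaining automorphism ambiguity comes only from $B$, so the inductive choices can be kept compatible.

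\textbf{Equivariance.} The torus or finite group acts on all the cohomology groups above, and the K\"unneth maps are natural. Reductivity lets us take invariant (weight-homogeneous) choices at every step: the lifts, the corrections $b_j$, and the gauges $u$. Under these choices $\varphi$ matches the weights of the $h_j$ from Theorem~\ref{prop:namikawa}.

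\textbf{Main obstacle.} The delicate step is the mixed obstruction computation. One has to verify that the defects of the tentative tensor-product model have mixed $\HP^3$-component exactly $\sum_j\kappa_j\otimes\{f'_j,-\}$, with no cross-contamination from the higher terms $m_\alpha,\Theta_\alpha$ (these are $I$-trivial, since $f'_j\in t\cB$ and $|\alpha|\ge2$). One also has to check that the three components of the obstruction really vanish separately. I would first test this directly on the representatives $x=a\otimes1$, $y=a'\otimes1$, $z=1\otimes b$, where \eqref{eq:mixed-product-representative} isolates the mixed part.
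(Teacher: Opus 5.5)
Your proposal is correct and follows the paper's proof essentially step for step. The shared steps are: order-by-order lifting; reading off the mixed component $\sum_j\kappa_j\otimes\{f'_j,-\}$ of the vanishing obstruction via \eqref{eq:HP3} to force the $f'_j$ to be Casimir; absorbing the difference class of two genuine lifts via \eqref{eq:HP2} into shifts of the $f_j$ and of $\cB$; removing the exact remainder by $\mathrm{id}-u$; and averaging for equivariance. The differences are only bookkeeping, plus two small corrections:
\begin{itemize}
\item The paper first makes $B'$ Poisson, using the chain map $\rho_\ell$ (restrict inputs to $1\otimes B$, then apply $\ell\otimes\mathrm{id}$), and only afterwards computes the mixed component. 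This avoids having to identify the $\HP^3(B,0)$-component of the obstruction of the uncorrected model.
\item Your tentative lift of $\cB_k$ must lift its generally nonzero bracket, not carry the zero bracket as you wrote.
\item For general $A$, the equivariant miniversal family $\cU_A$ comes from Rim's equivariant theory, as in the paper. Theorem \ref{prop:namikawa} supplies it only for conical symplectic $A$.
\end{itemize}
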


\begin{remark}\label{remark:productcounterexample}
The conclusion does {\it not} hold for arbitrary Poisson algebras. 
Take $A=\C[x]$ and $B=\C[y]$, both with zero Poisson brackets. 
Then 
\[
\mathcal D=\C[x,y][[t]],\qquad \{x,y\}=t
\]
is a flat formal Poisson deformation of $(A\otimes B,0)$. On the other hand, its 
each factor (of Krull dimension $1$) 
has only trivial formal Poisson deformations: 
its underlying commutative algebra is formally rigid, 
and every bracket in one variable is zero since $\{x,x\}=0, \{y,y\}=0$. 
Hence the right hand side of \eqref{eq:SPD} would always have 
zero Poisson bracket, contradicting $\{x,y\}=t$. 
Here $\HP^0(A)=A$ and $\HP^1(A)=A\,\partial_x$, 
so the hypotheses on $A$ exclude precisely this type 
of mixed deformation. 
\end{remark}

\begin{proof}
Put
\[
 \Lambda_n:=\mathbb C[t]/(t^{n+1}),\qquad
 \mathcal D_n:=\mathcal D/t^{n+1}\mathcal D.
\]
We construct the presentation 
\eqref{eq:SPD} 
and its marked isomorphism
simultaneously over all $\Lambda_n$. At order $n$, the data are a flat
Poisson deformation $\mathcal B_n$ of $(B,0)$, elements
\[
 f_{j,n}\in t\mathcal B_n\cap Z_{\mathrm{Pois}}(\mathcal B_n)
 \quad(1\leq j\leq s),\qquad \varphi_n(h_j)=f_{j,n},
\]
and a marked Poisson isomorphism
\begin{equation}\label{eq:psin}
 \psi_n:\mathcal E_n:=
 \mathcal U_A\widehat\otimes_{D_A,\varphi_n}\mathcal B_n
 \xrightarrow{\ \sim\ }\mathcal D_n.
\end{equation}
These data must reduce to the previously chosen data at every lower order.
For $n=0$ take $\mathcal B_0=B$, $f_{j,0}=0$, and $\psi_0=\mathrm{id}$.
At a finite order the completed base change means substitution in the 
parameters, followed by reduction modulo $t^{n+1}$.

We use the degree-two K\"unneth formula \eqref{eq:HP2},
\[
 \mathrm{HP}^2(A\otimes B,\pi_A+0)
 \simeq \mathrm{HP}^2(A)\otimes B\ \oplus\ \mathrm{HP}^2(B,0),
\]
and the small-extension calculus recalled in
\S \ref{sec:PD-cohomology}. For the mixed obstruction in Step 2,
we also use the degree-three formula \eqref{eq:HP3} and the
representatives \eqref{eq:mixed-product-representative}.

\medskip
\noindent\textbf{Step 1: take tentative lifts of all the data.}
\label{splitrev:step1}
Assume the construction has been made over $\Lambda_n$, and set
\[
 \Lambda':=\Lambda_{n+1},\qquad
 I:=\ker(\Lambda'\longrightarrow\Lambda_n)=(t^{n+1}).
\]
In particular, $tI=I^2=0$. Choose module identifications compatible with
the already fixed marking and with $\psi_n$, and lift the underlying
module of $\mathcal B_n$ to
\[
 B':=B\otimes\Lambda',\qquad R':=A\otimes B',\qquad R_0:=A\otimes B.
\]
Choose arbitrary unit-preserving lifts on $B'$ of the symmetric
multiplication and alternating bracket of $\mathcal B_n$, and choose
$f'_j\in tB'$ lifting $f_{j,n}$. These are only tentative operations:
$B'$ has not yet been made a Poisson algebra. Write
$f'=(f'_1,\ldots,f'_s)$.

Choose, once and for all, a unit-preserving $D_A$-module trivialization
of $\mathcal U_A$, and denote its multiplication and bracket by
$m_h,\Theta_h$. Substituting $h_j=f'_j$ gives $m_{f'},\Theta_{f'}$;
only finitely many parameter terms contribute since $t^{n+2}=0$.
Define tentative operations on $R'$ by
\begin{equation}\label{eq:temporaryMP}
 \begin{aligned}
 M_{\rm pre}(a\otimes b,a'\otimes b')&=m_{f'}(a,a')\,bb',\\
 P_{\rm pre}(a\otimes b,a'\otimes b')&=\Theta_{f'}(a,a')\,bb'
                  +m_{f'}(a,a')\,\{b,b'\}_{B'}.
 \end{aligned}
\end{equation}
Here multiplication of the $B'$-coefficients uses the tentative
multiplication on $B'$. Fix a parenthesization when evaluating parameter
monomials. Reassociations involving a positive-order parameter have zero
defect: the associator takes values in $IB'$ and is $\Lambda'$-trilinear,
so a parameter in $tB'$ kills it by $tI=0$.

\label{splitrev:obstruction}
These operations 
modulo $I=(t^{n+1})$ 
are exactly the prescribed model modulo $(t^n)$. 
Their associativity, Leibniz and Jacobi defects therefore take values in
\[
 IR'=I(A\otimes B')\simeq I\otimes R_0.
\]
They give the degree-three obstruction cocycle
$\mathrm{ob}\in I\otimes\mathrm{DP}^3(R_0)$ of \S \ref{sec:PD-cohomology}. Since
$\mathcal D_{n+1}$ already exists and lifts $\mathcal D_n$, the marked
isomorphism $\psi_n$ identifies it with a genuine lift of our order-$n$
model. Hence
\[
 [\mathrm{ob}]=0\quad\text{in } I\otimes\mathrm{HP}^3(R_0).
\]
This ensures that some correction on the whole module 
exists. However, it does
{\it not} yet ensure that a correction preserves the partial-
product form like \eqref{eq:temporaryMP}. 
Also note that, as we use throughout the proof, a $\Lambda'$-
multilinear map with values in $IR'$ 
(or $IB'$) depends only on its inputs modulo $t$, because 
$tI=0$.

\medskip
\noindent\textbf{Step 2: make $B'$ a Poisson algebra and $f'_j$ Casimir.}

Choose a linear functional $\ell:A\to\mathbb C$ with $\ell(1)=1$;
it need not be multiplicative or Poisson. Restricting all inputs to
$1\otimes B$ and applying $\ell\otimes\mathrm{id}_B$ to the output
defines a chain map
\[
 \rho_\ell:\mathrm{DP}^{\bullet}(R_0)
       \longrightarrow\mathrm{DP}^{\bullet}(B,0).
\]
Indeed, multiplication by a pure $B$ input acts only on the second
factor, and its Poisson action on all of $R_0$ is zero. Thus both parts
of the cochain differential commute with this restriction and
projection. Extend $\rho_\ell$ by $\mathrm{id}_I$ on coefficients.

Consequently $\rho_\ell(\mathrm{ob})$ is precisely
the defect of the tentative operations on $B'$. Its class is zero,
since $[\mathrm{ob}]=0$. We can therefore choose normalized corrections
\[
 \epsilon_m,\epsilon_\pi:B\times B\longrightarrow IB'
                  \simeq I\otimes B
\]
which make $B'$ a genuine Poisson lift of $\mathcal B_n$;
they vanish whenever either input is $1$.
Extend them $\Lambda'$-bilinearly by reducing inputs modulo $t$,
and add them to the tentative operations on $B'$.
Using the corrected operations on $B'$ and the same 
lifts $f'_j$, define operations $(M,P)$ on $R'$ by the
formulas (3.18).
Let $\mathrm{ob}$ now denote the defect cocycle of this new pair.
Since the pair still lifts the prescribed order-$n$ model,
its obstruction class remains zero.
The multiplication $M$ is associative, being the base change
of $m_h$.
It remains to verify the Leibniz and Jacobi identities for $P$. 

\smallskip
For $b\in B$, choose a lift $\widetilde b\in B'$ and put
\[
 c_j(b):=\{f'_j,\widetilde b\}_{B'}\in IB'\simeq I\otimes B.
\]
The value lies in $IB'$ since $f_{j,n}$ is Casimir. 
It is independent 
of the lift of $b$: replacing $\widetilde b$ by $\widetilde b+tb_1$
changes the value by $t\{f'_j,b_1\}_{B'}\in tIB'=0$.
Also, changing $f'_j$ by an element of $IB'$ has no effect, because
$\{B',B'\}_{B'}\subset tB'$. Changing the lifted bracket by an
$IB'$-valued correction has no effect either, since $f'_j\in tB'$.
The Leibniz rule in $B'$ then gives a $\mathbb{C}$-derivation on $B$ as
\[
 c_j\in\mathrm{Der}_{\mathbb C}(B,I\otimes B)
        =I\otimes\mathrm{Der}_{\mathbb C}(B).
\]
We want to show that these $c_j$ vanish, so that the chosen $f'_j$
are already Casimir elements.
Put
\[
 (\mu_j,\nu_j):=
 \left(\left.\partial_{h_j}m_h\right|_0,
       \left.\partial_{h_j}\Theta_h\right|_0\right),
 \qquad \kappa_j=[(\mu_j,\nu_j)]\in\mathrm{HP}^2(A),
\]
so that the $\kappa_j$ form the basis of $\HP^2(A)$ in \eqref{eq:KS}.
For $x=a\otimes1$, $y=a'\otimes1$ and
$z=1\otimes\widetilde b$, the 
mixed term of the defect of Leibniz rule and the 
defect of the Jacobi rule of 
$(M,P)$ of $(A\otimes B)\otimes \Lambda'$ 
\eqref{eq:temporaryMP} 
are 
\[
\begin{aligned}
L(x,y,z)&=\sum_j\mu_j(a,a')c_j(b),\\
J(x,y,z)&=\sum_j\nu_j(a,a')c_j(b).
\end{aligned}
\]
Indeed, the higher order terms vanish since
$f'_j\in tB'$ and $tI=0$.

Since $B'$ is Poisson, the remaining defects come from bracketing
the coefficients of $m_{f'},\Theta_{f'}$ with $B'$.
On arbitrary pure tensors, the same expansion gives exactly the
sum of the representatives \eqref{eq:mixed-product-representative}
for $\eta_j=(\mu_j,\nu_j)$ and $c_j$; terms containing both $c_j$
and a $B'$-bracket vanish since $\{B',B'\}_{B'}\subset tB'$.
Consequently the mixed component in \eqref{eq:HP3} is
\begin{equation}\label{eq:SPD-mixed-obstruction}
 [\mathrm{ob}]_{\mathrm{mix}}
 :=(\mathrm{id}_I\otimes\pr_{2,1})[\mathrm{ob}]
 =\sum_j\kappa_j\otimes c_j
 \in I\otimes\HP^2(A)\otimes\Der_{\mathbb C}(B).
\end{equation}
Since $\mathcal D_{n+1}$ is a genuine lift, $[\mathrm{ob}]=0$,
so its mixed component is zero. The $\kappa_j$ are linearly
independent, and hence $c_j=0$ for every $j$.

Thus every $f'_j$ is a Casimir element in $B'$.
The map
$D_A\to B'$, $h_j\mapsto f'_j$, consequently takes values in the
Poisson center, and
$\mathcal U_A\widehat\otimes_{D_A,h_j\mapsto f'_j}B'$
has a genuine Poisson algebra structure over $\Lambda'$. 

\medskip
\noindent\textbf{Step 3: obtain an isomorphism with $\mathcal D_{n+1}$ by modifying the lifts.}
\label{splitrev:step3}
At the end of Step 2, $(R',M,P)$ is a genuine Poisson lift.
We must still arrange an isomorphism with $\mathcal D_{n+1}$ which
extends the specified $\psi_n$.

\smallskip
\noindent\emph{Fix the comparison and the sign of the difference.}
\label{splitrev:comparison}
Choose a unit-preserving $\Lambda'$-module isomorphism
$q:R'\xrightarrow{\sim}\mathcal D_{n+1}$ which, modulo 
$t^{n+1}$ is $\psi_n$ \eqref{eq:psin}. 
Such a 
lift is obtained 
by lifting a module basis and fixing the unit. It is not yet 
required 
to preserve multiplication or bracket. Pull back the target operations:
\[
 m_{\mathrm{tar}}(x,y):=q^{-1}(q(x)q(y)),\qquad
 \pi_{\mathrm{tar}}(x,y):=q^{-1}(\{q(x),q(y)\}_{\mathcal D_{n+1}}).
\]
The subindex ${\rm tar}$ stands for ``target", meaning the 
$\mathcal{D}$-side. 
Keep this comparison fixed during the remaining corrections. Put
\[
 \Delta:=(m_{\mathrm{tar}}-M,\pi_{\mathrm{tar}}-P)
          \in I\otimes\mathrm{DP}^2(R_0).
\]
The value of each component lies in $IR'\simeq I\otimes R_0$, and it depends only on 
inputs modulo $t$. Subtracting the identities for the two genuine
Poisson structures gives $D\Delta=0$ 
(see the explanation after \eqref{eq:mpi}). 
Here $D$ is the Harrison-Poisson differential of 
$R_0$ as defined in 
\S \ref{sec:PD-cohomology}.

\smallskip
\noindent\emph{Modify $f'_{j}$ and the Poisson algebra structure on $B'$}
\label{splitrev:class}
Use \eqref{eq:HP2} to write
\[
 [\Delta]=\sum_{j=1}^s\kappa_j\otimes g_j+\delta_B,
 \qquad g_j\in I\otimes B,\quad
 \delta_B\in I\otimes\mathrm{HP}^2(B,0).
\]
The coefficients $g_j$ correct the parameters, while $\delta_B$
corrects the Poisson algebra structure on $B'$. 
These are corrections between 
genuine lifts, unlike the corrections of Step 2 which removes the obstruction. 
Choose a normalized cocycle
\[
 \eta_B=(\alpha_B,\beta_B)\in I\otimes\mathrm{DP}^2(B,0),
 \qquad D\eta_B=0,\qquad [\eta_B]=\delta_B.
\]
Thus $\alpha_B$ is symmetric, $\beta_B$ is alternating, and each
vanishes when an input is $1$. Regard their values, and the $g_j$, as
elements of $IB'$, and extend the cochains to $B'$ by reducing inputs
modulo $t$. Writing $m'_B,\pi'_B$ for the current operations on $B'$,
set
\[
 m_B^+:=m'_B+\alpha_B,\qquad
 \pi_B^+:=\pi'_B+\beta_B,\qquad f_j^+:=f'_j+g_j.
\]
\label{splitrev:corrections}
The new operations satisfy the Poisson identities because 
$D\eta_B=0$. The unit and 
the reduction to $\mathcal B_n$ are unchanged. 

These corrections preserve centrality. Indeed,
$\pi'_B(B',B')\subset tB'$, $g_j\in IB'$ and $f_j^+\in tB'$, so
for every $b'\in B'$,
\[
 \pi'_B(g_j,b')=0,\qquad \beta_B(f_j^+,b')=0.
\]
The first equality uses $I\,tB'=0$ and bilinearity;
the second uses $f_j^+\equiv0\bmod t$, since $\beta_B$ is
$IB'$-valued and $\Lambda'$-bilinear,
while $f_j^+\in tB'$ and $tI=0$. Hence 
\[
 \pi_B^+(f_j^+,b')
  =\pi'_B(f'_j,b')+\pi'_B(g_j,b')+\beta_B(f_j^+,b')=0,
\]
i.e., $f_j^+$ is still a Casimir element. 
Consequently
\[
 \mathcal B_{n+1}:=(B',m_B^+,\pi_B^+),\qquad
 f_{j,n+1}:=f_j^+,\qquad \varphi_{n+1}(h_j)=f_j^+
\]
define corrected partial-product data extending those at 
order $n$. Note that this correction reflects the 
Poisson algebra structure on $\mathcal{D}_{n+1}$ through 
$\Delta$, so that new $\Delta$ would be {\it cohomologically} zero. 
Below, morally speaking, we aim to make this $\Delta$ vanish 
(not only as a class) after some ``coordinate change".

\smallskip
\noindent\emph{Compare the corrected model with the target $\mathcal{D}_{n+1}$.}
\label{splitrev:representatives}
Let $(M^+,P^+)$ be the operations on $A\otimes B'$ 
induced by these corrected data, 
in the same module trivialization of $\mathcal U_A$. That is, 
for $a\in A, b\in B'$, 
\[
\begin{aligned}
M^+(a\otimes b,a'\otimes b')
 &:=m_{f^+}(a,a')\,m_B^+(b,b'),\\
P^+(a\otimes b,a'\otimes b')
 &:=\Theta_{f^+}(a,a')\,m_B^+(b,b')\\
 &\quad+m_{f^+}(a,a')\,\pi_B^+(b,b'),
\end{aligned}
\]
where $f^+:=(f_1^+,\ldots,f_s^+)$. 
As the effect of the modification on the structure of $B'$ (to $\mathcal{B}_{n+1}$) 
we made earlier, we obtain 
\[
 \begin{aligned}
 m_{f'+g}(a,a')-m_{f'}(a,a')&=\sum_j\mu_j(a,a')g_j,\\
 \Theta_{f'+g}(a,a')-\Theta_{f'}(a,a')&=\sum_j\nu_j(a,a')g_j,
 \end{aligned}
\]
as $f'$ lies in $tB'$, $g_j\in IB'$ and $tI=0$. For 
$a,a'\in A$ and $b,b'\in B$ the expansion gives exactly 
\begin{align}
(M^+-M)(a\otimes b,a'\otimes b')
 &=\sum_j\mu_j(a,a')\otimes(g_jbb')
   +aa'\otimes\alpha_B(b,b'),
   \label{eq:splitrev-product-m}\\
(P^+-P)(a\otimes b,a'\otimes b')
 &=\sum_j\nu_j(a,a')\otimes(g_jbb') \notag\\
 &\quad+\pi_A(a,a')\otimes\alpha_B(b,b') \notag\\
 &\quad+aa'\otimes\beta_B(b,b').
   \label{eq:splitrev-product-p}
\end{align}
Here the second and third terms on the right-hand side of
\eqref{eq:splitrev-product-p} arise from
$\Theta_{f'}\alpha_B$ and $m_{f'}\beta_B$,
respectively. 
Denote the pair of \eqref{eq:splitrev-product-m} and 
\eqref{eq:splitrev-product-p} by $L(g,\eta_B)$. 

The first terms on the right-hand sides of
\eqref{eq:splitrev-product-m} and
\eqref{eq:splitrev-product-p} together represent,
under the K\"unneth isomorphism \eqref{eq:HP2},
the class
\[
\sum_j\kappa_j\otimes g_j
\in I\otimes_\C\HP^2(A)\otimes_\C B.
\]
The remaining terms on the right-hand sides of
\eqref{eq:splitrev-product-m} and
\eqref{eq:splitrev-product-p} together are precisely
the infinitesimal changes in the multiplication and
bracket of $A\otimes B$ induced by deforming only
$B$ through $\eta_B=(\alpha_B,\beta_B)$.
Hence they represent the class
\[
\delta_B=[\eta_B]\in I\otimes_\C\HP^2(B,0)
\]
under the K\"unneth isomorphism \eqref{eq:HP2}. 
Combining these together, 
it follows that
\begin{align}\label{eq:M+MP+P_Delta}
 [(M^+-M,P^+-P)]=[\Delta]\in I\otimes\HP^2(R_0). 
\end{align}
Thus the remaining difference
\[
 \Delta^+:=(m_{\mathrm{tar}}-M^+,\pi_{\mathrm{tar}}-P^+)
           =\Delta-L(g,\eta_B)
\]
is an exact degree-two cocycle.

\medskip
\noindent\textbf{Step 4: remove the exact difference and specify the isomorphism.}
\label{splitrev:step4}
We now apply a variant of the coordinate-change
arguments just below \eqref{eq:gauge-differential}, to lift the 
Poisson isomorphism 
$\psi_n:\mathcal E_n
=\mathcal U_A\widehat\otimes_{D_A,\varphi_n}\mathcal B_n
\xrightarrow{\sim}\mathcal D_n$
over $\Lambda_n$ to an isomorphism over $\Lambda_{n+1}$.

Both Poisson structures have the same unit $1$, so their
difference $\Delta^+$ is normalized.
Since $\Delta^+$ is exact, choose a 
(normalized\footnote{it is automatic in this situation though}) $1$-cochain
$u\in I\otimes DP^1(R_0)$ with $Du=\Delta^+$.
In terms of the preceding corrections, this says
\begin{equation}
 \begin{gathered}
 \Delta=L(g,\eta_B)+Du,\qquad D\eta_B=0,\\
 u\in I\otimes\mathrm{DP}^1(R_0)={\rm Hom}_{\C}(R_0,I\otimes R_0),\quad u(1)=0.
 \end{gathered}
 \label{eq:splitrev-correction}
\end{equation}
Extend $u$ to a $\Lambda'$-linear map endomorphism of $R'$. 
It takes values in $IR'$ and vanishes on $IR'$, so 
$u^2$ (the self-composite with itself) vanishes. 
We now 
set a $\Lambda'$-linear selfmap 
\[
 F:=\mathrm{id}-u,\qquad F^{-1}=\mathrm{id}+u,
\]
both mapping $R'$ to itself. We also 
extend $Du$ as follows: 
for each component $c=(Du)_m$ or $c=(Du)_\pi$, define its
extension to $R'$ by the composite
\[
R'\times R'
\xrightarrow{\ \bmod t\ }
R_0\times R_0
\xrightarrow{\ c\ }
I\otimes R_0
\xrightarrow{\ \sim\ }
IR'.
\]
Explicitly, for $x,y\in R'$, 
writing $\bar x=x\bmod t, \bar y=y\bmod t \in R_0$, 
this extension is 
\[
\widetilde c(x,y):=c(\bar x,\bar y)\in IR',
\]
where we use the identification $I\otimes R_0\simeq IR'$.
It is $\Lambda'$-bilinear because $tIR'=0$. 
We continue to write $Du$ for its extended components.
Formula \eqref{eq:gauge-differential} for the Poisson
$\Lambda'$-algebra $(R',M^+,P^+)$, together with $tI=0$, gives
\begin{equation}\label{eq:splitrev-lifted-differential}
\begin{aligned}
(Du)_m(x,y)
&=M^+(u(x),y)+M^+(x,u(y))-u(M^+(x,y)),\\
(Du)_\pi(x,y)
&=P^+(u(x),y)+P^+(x,u(y))-u(P^+(x,y))
\end{aligned}
\end{equation}
for all $x,y\in R'$. 
By \eqref{eq:splitrev-correction} and the definition of
$\Delta^+$,
\[
m_{\mathrm{tar}}-M^+=(Du)_m,
\qquad
\pi_{\mathrm{tar}}-P^+=(Du)_\pi.
\]
Both differences are $IR'$-valued and $\Lambda'$-bilinear.
Thus, since $u(x),u(y)\in IR'$, we have
$m_{\mathrm{tar}}(u(x),y)=M^+(u(x),y), m_{\mathrm{tar}}(x,u(y))=M^+(x,u(y))$ and the same holds for $\pi_{\rm tar}$. 
Similarly, 
$m_{\mathrm{tar}}(u(x),u(y))
=\pi_{\mathrm{tar}}(u(x),u(y))=0$. 
Expanding $F=\mathrm{id}-u$ and using these observations,
we obtain
\[
\begin{aligned}
m_{\mathrm{tar}}(Fx,Fy)
&=m_{\mathrm{tar}}(x,y)
  -m_{\mathrm{tar}}(u(x),y)
  -m_{\mathrm{tar}}(x,u(y))\\
&=M^+(x,y)+(Du)_m(x,y)
  -M^+(u(x),y)-M^+(x,u(y))\\
&=M^+(x,y)-u(M^+(x,y))
 =F(M^+(x,y)),\\[4pt]
\pi_{\mathrm{tar}}(Fx,Fy)
&=\pi_{\mathrm{tar}}(x,y)
  -\pi_{\mathrm{tar}}(u(x),y)
  -\pi_{\mathrm{tar}}(x,u(y))\\
&=P^+(x,y)+(Du)_\pi(x,y)
  -P^+(u(x),y)-P^+(x,u(y))\\
&=P^+(x,y)-u(P^+(x,y))
 =F(P^+(x,y)),
\end{aligned}
\]
for $x,y\in R'$. 
The last line of each calculation follows by substituting
\eqref{eq:splitrev-lifted-differential}, which cancels the
two terms involving $u$ in an input. 
Since $F$ is an invertible $\Lambda'$-linear map with $F(1)=1$,
the above identities show that 
\[
F:(R',M^+,P^+)
\xrightarrow{\ \sim\ }
(R',m_{\mathrm{tar}},\pi_{\mathrm{tar}})
\]
is an isomorphism between (a priori different) Poisson 
$\Lambda'$-algebras. 

The fixed universal-family trivialization gives a Poisson identification
\[
 \chi_{n+1}:\mathcal E_{n+1}:=
 \mathcal U_A\widehat\otimes_{D_A,\varphi_{n+1}}\mathcal B_{n+1}
 \xrightarrow{\ \sim\ }(R',M^+,P^+).
\]
Define
\[
 \psi_{n+1}:=q\circ F\circ\chi_{n+1}:
 \mathcal E_{n+1}\xrightarrow{\ \sim\ }\mathcal D_{n+1}.
\]
All changes are valued in $IR'$ or $IB'$, so that the chosen module
identifications lift the previously fixed ones i.e., 
\[
 \psi_{n+1}\otimes_{\Lambda'}\Lambda_n=\psi_n,\qquad
 f_{j,n+1}\bmod I=f_{j,n}.
\]
This completes the inductive step. 

\medskip
\noindent\textbf{Step 5: keep the actions and take inverse limits.}
\label{splitrev:step5}
Let $H:=T\times\Gamma$ denote the product of the given torus
$T$ and the commuting finite group $\Gamma$ acting on
$\mathcal D$.
The induced action on the marked central fiber
$(A\otimes B,\pi_A+0)$ preserves the two factors
$A\otimes1$ and $1\otimes B$.
Here $T$ includes any torus that moves the deformation
parameter $t$. 
Consider the groupoid of marked flat Poisson deformations
of $(A,\pi_A)$, following \cite{Rim}. 
This groupoid is homogeneous: flat commutative algebras
and their Poisson brackets glue over fibre products of
Artin base rings with one map surjective.
The given $H$-action on $A$ induces an action on this
deformation groupoid.

The tangent space is the finite-dimensional $\HP^2(A)$,
and the infinitesimal automorphisms form the rational
$H$-module $\Der^{\Pois}(A)$. 
Since $H$ is linearly reductive, for every finite-dimensional
rational $H$-module $W$ we have
\[
\begin{aligned}
H^1_{\mathrm{rat}}
  \bigl(H,W\otimes\HP^2(A)\bigr)&=0,\\
H^2_{\mathrm{rat}}
  \bigl(H,W\otimes\Der^{\Pois}(A)\bigr)&=0.
\end{aligned}
\]
These are the cohomological vanishing conditions required
for existence in \cite[p. 225, Theorem]{Rim}.
Indeed, it allows 
us to choose the formal miniversal family
$\mathcal U_A\to\operatorname{Spf}D_A$ with compatible
$H$-actions on both $D_A$ and $\mathcal U_A$, extending
the given action on $A$ and making the family equivariant.
These actions are algebraic at every finite order, and
the bracket has the same character as on $A$.
For the corresponding construction in the universal
Poisson case with a $\C^*$-action scaling the bracket,
see also \cite[Theorem 2.1]{NamikawaNotes}.

Choose compatible equivariant sections modulo powers of
$\mathfrak m_{D_A}$ to lift $\mathfrak m_{D_A}/\mathfrak m_{D_A}^2$.
Formal smoothness makes the lifted basis a system of formal coordinates.
Put
\[
 V_{\mathrm{par}}:=\langle h_1,\ldots,h_s\rangle
                 \simeq\mathfrak m_{D_A}/\mathfrak m_{D_A}^2.
\]
At each inductive step, we choose the lifts $f'_j$
and their corrections $g_j$ so that the maps 
\[
\begin{aligned}
V_{\mathrm{par}}&\longrightarrow tB',
& h_j&\longmapsto f'_j,\\
V_{\mathrm{par}}&\longrightarrow IB',
& h_j&\longmapsto g_j
\end{aligned}
\]
are $H$-equivariant linear maps. 
We next explain how to make these equivariant linear maps 
in the induction on the order of $h_i$s. 
At finite order (modulo $t^{n+1}$, or modulo
$\mathfrak m_{D_A}^{r+1}$ for the universal family),
we make the choices in Steps 1-4 equivariantly in their
original order, using the standard 
Reynolds projection $v\mapsto \mathcal{R}(v)$ 
for various $H$-modules and their elements $v$. 
It is nothing but the average operator with respect to (any) 
compact real form $H_c$ of $H$. 
For module maps, averaging is performed in the affine space of
unit-preserving linear lifts of a fixed equivariant reduction.
An averaged lift remains invertible whenever that reduction is an
isomorphism, since comparison with any invertible lift gives the identity
plus a nilpotent endomorphism.

First choose the universal-family module trivialization
equivariantly by applying $\mathcal R$ at successive
$\mathfrak m_{D_A}$-adic orders.
In Step 1, apply $\mathcal R$ to various isomorphisms between $H$-representations, the tentative operations on $B'$, and the lift $V_{\mathrm{par}}\to tB'$. 
In Step 2, apply it to $\ell$ and then to the normalized
corrections $(\epsilon_m,\epsilon_\pi)$. 
In Step 3, first choose $q$ equivariantly, 
take $(g,\eta_B,u)$ as $H_c$-invariant elements 
(by the Reynolds operators) and apply the same arguments. 

Finally, in Step 4 use $u^{\mathrm{eq}}:=\mathcal R(u)$ to form
\[
 F=\mathrm{id}-u^{\mathrm{eq}},
 \qquad
 \psi_{n+1}=q\circ F\circ\chi_{n+1}.
\]
Note that $\psi_{n+1}$ is then $H$-equivariant. 
Finally take the compatible inverse limits
\[
 \mathcal B:=\varprojlim_n\mathcal B_n,\qquad
 f_j:=\varprojlim_n f_{j,n},\qquad
 \psi:=\varprojlim_n\psi_n.
\]
The compatible module identifications give
$\mathcal B\simeq B[[t]]$ as $\mathbb C[[t]]$-modules; 
Taking inverse limits therefore yields a marked 
equivariant Poisson isomorphism  
\[
 \psi:\mathcal U_A\widehat\otimes_{D_A,\varphi}\mathcal B
       \xrightarrow{\ \sim\ }\mathcal D,\qquad
 \varphi(h_j)=f_j.
\]
\end{proof}

\medskip
\noindent
Our proof can be summarized as follows. 
Step 2 corrects $B'$ using the vanishing of $[\rho_\ell(\mathrm{ob})]\in I\otimes\HP^3(B,0)$ and establishes that the $f'_j$ are Casimir 
by analyzing the mixed term of the obstruction. Then Step 3 uses 
 $\Delta$, which is the difference of $(M,P)$ with the target operations
pulled back along temporary comparison $q$ with $\mathcal{D}_{n+1}$, 
to modify to obtain $(M^+,P^+)$. Summarizing up, we have the following 
picture: 
\[
\begin{tikzcd}[column sep=2.5em,row sep=2em]
(M_{\rm pre},P_{\rm pre})
  \arrow[r,dashed,"\text{in Step 2}"]
& (M,P)
  \arrow[r,dashed,"\text{in Step 3}"]
& (M^+,P^+)
  \arrow[d,"F=\id-u","\simeq"']
\\
&& (m_{\mathrm{tar}},\pi_{\mathrm{tar}})
  \arrow[r,"q","\simeq"']
&[-1.6em] \mathcal D_{n+1}.
\end{tikzcd}
\]
The remaining exact difference $\Delta^+=Du$ is then removed by
the coordinate change $F=\id-u$, where
$u\in I\otimes\DP^1(R_0)$ is a linear cochain with $u(1)=0$ obtained in Step4. 

\section{Symplectic product cover and formal rigidity}\label{sec:blocks}\label{sec:formal}
This section is the core of the rigidity argument, 
combining the 
preparatory results of Sections \ref{sec:analysis} and \ref{sec:PD}.
It follows a single family from the structure of its central fiber to
formal rigidity. Except for \S \ref{sec:intermediate},
which verifies \ref{hyp:form} in the symplectic singularity application, 
the discussions below apply to any family satisfying
\ref{hyp:grading}-\ref{hyp:metric}. 
The limit of the given symplectic form can be degenerate, but in this 
particular situation, we can and do apply Theorem \ref{thm:SPD} to obtain a
formal product on its cover and induced deformations which we observe to be 
rigid in an inductive manner (it can not be generalized too much; see Remark 
\ref{remark:productcounterexample}). That is the core idea. 

\subsection{On the homogeneous holomorphic symplectic 
form}\label{sec:intermediate}
This subsection supplies \ref{hyp:form} for the algebraic conification application, independently of the conclusion $W=C$. For a normalized-volume minimizing valuation $v$,
\[
W=\Spec\gr_v\OO_{X,x},\qquad \gr_v\OO_{X,x}=\bigoplus_\lambda\mathfrak a_\lambda(v)/\mathfrak a_{>\lambda}(v).
\]
\begin{proposition}\label{prop:formW}
Let $(W,\xi)$ be a K-semistable symplectic affine cone of dimension $n=2m$, normalized by $A_W(\xi)=2m$. For the (complexified) torus $T$ generated by the Reeb vector $\xi$, $W$ admits a $T$-homogeneous 
algebraic 
symplectic form of character $\tau$ with $\tau(\xi)=2$. 
\end{proposition}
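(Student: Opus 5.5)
The plan is to build the form on $W$ from the degeneration itself, taking the initial form of a symplectic form on $X$, rather than from an arbitrary (possibly inhomogeneous) symplectic form on $W$. The weight will be pinned down by comparing with the canonical frame, whose weight is the log discrepancy $A_W(\xi)=2m$. I expect Step 3, the nondegeneracy of the initial form, to be the main obstacle.

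\emph{Step 1 (homogeneous volume frame).} $W$ is a symplectic variety by \cite[\S4]{NO}, \cite[6.2]{XZ}, so it is Gorenstein canonical with $\omega_W\simeq\OO_W$. A nowhere vanishing frame $\Omega_W$ of $K_W$ is unique up to units of $\Gamma(W,\OO_W)$. Since $\xi$ has positive weights on nonconstant functions, these units are constants. Hence $\Omega_W$ is a $T$-eigenvector, and its $\xi$-weight equals $A_W(\xi)=2m$.

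\emph{Step 2 (candidate form and an upper bound on its weight).} Let $\sigma_X$ be the symplectic form on the germ. Regard the reflexive forms $\Omega^{[p]}$ as filtered modules over $\OO_{X,x}$ via the valuation $v$, compatibly with $\gr_v\OO_{X,x}=\Gamma(W,\OO_W)$; this uses the identification of reflexive forms on $X^{\reg}$ with those on $W^{\reg}$ under the isotrivial degeneration of \cite{LiXu}, \cite[\S2]{NO}. The initial form $\sigma_0:=\operatorname{in}_v(\sigma_X)$ is $T$-homogeneous of weight $w:=v(\sigma_X)$ and closed.

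The top power $\sigma_X^m$ generates $\omega_X$, and its valuation is the log discrepancy $A_X(v)=A_W(\xi)=2m$. So $\operatorname{in}_v(\sigma_X^m)$ is a constant multiple of $\Omega_W$ from Step 1. Multiplicativity of the filtration gives $m\,w\le v(\sigma_X^m)=2m$, that is $w\le2$. Equality holds exactly when $\sigma_0^m=\operatorname{in}_v(\sigma_X^m)\neq0$, i.e.\ exactly when $\sigma_0$ is nondegenerate, in which case $\sigma_W:=\sigma_0$ has character $\tau$ with $\tau(\xi)=2$.

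\emph{Step 3 (the main obstacle: nondegeneracy, i.e.\ excluding $w<2$).} Here I would replace $\sigma_X$ by a better representative and work with the whole finite-dimensional space of closed reflexive $2$-forms in a weight window, not a single form. Decompose any symplectic form $\sigma$ on $W$ into $T$-weight components $\sigma=\sum_{w}\sigma_w$; this is a finite sum, each $\sigma_w$ closed. Set
\[
\sigma(s):=s^{-2}\delta_s^*\sigma=\textstyle\sum_w s^{w-2}\sigma_w .
\]
By Step 1, $\sigma(s)^m=s^{-2m}\delta_s^*\sigma^m=\sigma^m$ is independent of $s$ and nowhere zero.

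If the lowest weight $w_0$ were $<2$, then letting $s\to0$ after rescaling shows $\sigma_{w_0}^m=0$. In that case I would show the weight-$<2$ part is removable, as follows. The forms $\sigma_w$ with $w\neq2$ are exact, $\sigma_w=d(\iota_E\sigma_w/w)$ by Cartan's formula, exactly as in the proof of Lemma~\ref{lem:HP(A)}. A Moser-type path $\sigma_t=\sigma_2+t\sum_{w\ne2}\sigma_w$ then stays symplectic: its top power is the constant-term coefficient of a Laurent polynomial identity, and it equals $\sigma^m$. Together with the finiteness of $\pi_1(W^{\reg})$ \cite{Braun}, this should identify $\sigma$ with $\sigma_2$ up to an algebraic automorphism. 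In particular $\sigma_2^m$ is a nonzero multiple of $\Omega_W$, so $\sigma_2$ is the desired homogeneous symplectic form. Verifying that the path stays nondegenerate is precisely where the constancy of $\sigma(s)^m$ in $s$ (equivalently, the trace-zero condition $\sigma^{m-1}\wedge\sum_w(w-2)\sigma_w=0$ obtained by differentiating at $s=1$) must be exploited. This is the step I expect to be hardest, and I would first test it on a Kleinian singularity with a non-homogeneous symplectic form.
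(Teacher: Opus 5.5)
Your proposal has a genuine gap in Step~3, and that step carries the whole content of the proposition. Nothing in your argument uses K-semistability, and without it the statement is false. Take $W=\C^4$, $\sigma=dx_1\wedge dy_1+dx_2\wedge dy_2$, and $\xi$ with weights $\tfrac1{10},\tfrac9{10},\tfrac75,\tfrac85$ on $x_1,y_1,x_2,y_2$, so that $A_W(\xi)=4=2m$. Then $\sigma=\sigma_1+\sigma_3$ with $\sigma_2=0$, and $\sigma_1^2=0$, exactly as your rescaling predicts. Every $2$-form of weight $2$ is divisible by $dx_1$: the space is spanned by $x_1^{10}\,dx_1\wedge dy_1$, $x_1y_1\,dx_1\wedge dy_1$, $x_1^5\,dx_1\wedge dx_2$ and $x_1^3\,dx_1\wedge dy_2$. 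By your Step~1 any homogeneous symplectic form would have weight $2$, so none exists. In this example your path is $\sigma_t=t\sigma$, which degenerates at $t=0$.

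The false claim is that $\sigma_t^m$ equals the constant term of $\sigma(s)^m$. That constant term is the weight-$2m$ part of $\sigma^m$. It contains mixed products $\sigma_{w_1}\wedge\cdots\wedge\sigma_{w_m}$ with $\sum_i w_i=2m$ but some $w_i\ne2$ (here $\sigma_1\wedge\sigma_3$). In $\sigma_t^m$ these carry the factor $t^{\#\{i:\,w_i\ne2\}}$, and the components of weight $\ne 2m$ no longer cancel. Exactness of the $\sigma_w$ and finiteness of $\pi_1$ say nothing about nondegeneracy.

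The missing input is the weight lower bound the paper takes from \cite[Theorem 5.10(2)]{XZ} (cf.\ \cite[Theorem 4.5]{NO}). On a K-semistable cone, every nonzero $T$-component of a reflexive $2$-form has $\xi$-weight $\ge2$; this is the one place K-semistability enters. Given this bound, your own device finishes at once. All exponents $w-2$ in $\sigma(s)$ are $\ge0$, so $\sigma(s)\to\sigma_2$ as $s\to0$, and $\sigma(s)^m=\sigma^m$ yields $\sigma_2^m=\sigma^m$. Equivalently, the weight-$2m$ part of $\bigl(\sum_{\lambda\ge2}\sigma_\lambda\bigr)^m$ is $\sigma_2^m$, which is precisely the paper's proof. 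It is then completed by closedness (since $d$ preserves weight) and the single-character remark for $T_\xi$.

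With this input your Step~2 becomes superfluous. It only gives the upper bound $w\le2$ for an initial form and leaves the same lower bound open. It also relies on identifying filtered reflexive forms on $X$ with forms on $W$, which the intrinsic statement about $W$ does not require.
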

Recall that \cite[Theorem 4.5, Theorem 6.16]{NO} partially proved this in many cases.
\begin{proof}
Via the $T$-action,
we decompose an {\it algebraic} symplectic form into its $T$-eigen  components, $\sigma=\sum_\lambda\sigma_\lambda$ where $\lambda$
means the $\xi$-weight for the (volume-minimizing) Reeb vector field $\xi$. By \cite[Theorem 5.10(2)]{XZ} (cf., also related \cite[Theorem 4.5]{NO}), we have $\lambda\ge2$ for every nonzero component. The canonical frame $\sigma^m$ is an eigenvector: a torus translate differs by a unit, and units on a positively graded integral cone are constant. Its weight is $A_W(\xi)=2m$. The weight-$2m$ part of the expansion is therefore
\[
\sigma_2^m=\sigma^m.
\]
Thus $\sigma_2$ is non-degenerate. Differentiation preserves weight, so it is closed, and reflexive-form extension \cite[Theorem 1.4]{GKKP} extends it to a resolution. A Reeb weight determines at most one character of $T_\xi$ by its definition as the torus closure. Hence $\sigma_2$ has the required character.
\end{proof}
For a larger prescribed torus, the single-character condition remains part of
\ref{hyp:form}; the standard application uses $T=T_\xi$.
The analytic comparison of germs is completed in \S \ref{sec:conclusion}.
From \S \ref{sec:flag} onward, $\sigma_W$ is simply the form given
by \ref{hyp:form}; no further use is made of its construction from
$(X,x)$.

\subsection{From successive leading forms to parallel tangent subbundles}\label{sec:flag}
Let $\mathcal U\subset\cW$ be the relative smooth locus and $\mathcal E=T_{\mathcal U/\A^1}$. Since $p$ is flat with normal fibers, $\mathcal U\cap C=C^{\reg}$. Through the
projection from
$\mathcal{W}\setminus C\simeq W\times (\A^1\setminus 0)$ to $W$ and restriction,
we extend
$\sigma_W$ to a rational section on
$\bigwedge^2\mathcal E^*$. Take
the least integer $a$ such that
\[
S=t^a\sigma_W
\]
is in $\Gamma(\mathcal U,\bigwedge^2\mathcal
E^*)$, so that it is a relatively closed form
with the restriction to the central fiber is
$S_0\ne0$. Its $T$-character is $\tau$, and it is
$G_{\test}$-semi-invariant.

At the generic point $\eta_C$ of $C\subset \mathcal{W}$, we take the stalk
$\mathcal{O}_{\mathcal{U},\eta_C}$ as a
discrete valuation ring $R$  with uniformizer $t$. The
elementary divisors of an alternating form give a
local basis  $v_1,w_1,\ldots,v_m,w_m$
of $\mathcal{E}|_{\eta_C}$
in which
\begin{equation}\label{eq:smith}
S=\sum_{j=1}^m t^{q_j}v_j^*\wedge w_j^*,\qquad
0=q_1\le\cdots\le q_m=\qmax.
\end{equation}
The following intrinsic construction shows that
they describe subbundles everywhere on
$C^{\reg}$.
For integers $r\ge0$, define coherent subsheaves on $\mathcal U$
and $C^{\reg}$, respectively, by
\begin{equation}\label{eq:flag}
\begin{aligned}
N_r&:=\{v\in\mathcal E\mid S(v,\mathcal E)\subset t^r\OO_{\mathcal U}\},\\
K_r&:=\im(N_r\longrightarrow\mathcal E|_{C^{\reg}}).
\end{aligned}
\end{equation}
At this stage these are only coherent subsheaves, not assumed locally free;
in particular, the assertion that $K_r\subset T_{C^{\reg}}$ is a subbundle
will be proved in Proposition \ref{prop:blocks}.
In particular $K_0=T_{C^{\reg}}$. Define a 
$\mathcal{O}_{C^{\rm reg}}$-bilinear ($\mathcal{O}_{C^{\rm reg}}$-
valued) pairing on $K_r$ as follows: for any local sections 
$v_0,w_0$ of $K_r$, set 
\begin{equation}\label{eq:leading}
b_r(v_0,w_0):=t^{-r}S(v,w)\bmod t,
\qquad v,w\in N_r\text{ lifting }v_0,w_0.
\end{equation}
This is independent of the lifts: replacing $v$ by $v+tz$ changes the expression by $t^{1-r}S(z,w)$, which is divisible by $t$ since $w\in N_r$. The same holds for the other lift.

\begin{proposition}\label{prop:blocks}
The $K_r$ are parallel algebraic subbundles of $T_{C^{\reg}}$, and
\begin{equation}\label{eq:Kr+1_as_ker,Kvan}
K_{r+1}={\rm Ker}(b_r:K_r\to K_r^*),\qquad K_{\qmax+1}=0.
\end{equation}
For the distinct orders $0=e_1<\cdots<e_k=\qmax$, there is a splitting preserved by both tori,
\[
T_{C^{\reg}}=\bigoplus_{i=1}^kF_i,\qquad
K_r=\bigoplus_{e_i\ge r}F_i.
\]
The non-degenerate pairing induced by $b_{e_i}$ on $K_{e_i}/K_{e_i+1}$ defines a parallel algebraic form $\sigma_i$ supported on $F_i$. Each $\sigma_i$ has $T$-character $\tau$ and is $G_{\test}$-semi-invariant, and
\begin{equation}\label{sigmaCC}
\sigma_C=\sum_{i=1}^k\sigma_i
\end{equation}
is a $g$-parallel algebraic symplectic form.
In particular, $(C^{\rm reg},g)$ is a hyperK\"ahler cone. 
\end{proposition}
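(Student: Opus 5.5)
The plan is to get parallelism first and all the algebraic statements (subbundles, splitting, symplecticity) as consequences of it.

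\textbf{Step 1: homogeneous closed forms on $C$.} Consider the homogeneous algebraic forms $\beta=b_r(v_0,w_0)$-type objects: the leading coefficient $S_0=t^{-0}S\bmod t$ and, more generally, the forms
\[
\beta_r(v,w):=t^{-r}S(v,w)\bmod t,\qquad v,w\in N_r.
\]
Each $\beta_r$ extends to a reflexive $2$-form on all of $C$ once defined on a big open set. Since $S$ has $T$-character $\tau$ with $\langle\tau,\xi\rangle=2$, $S_0$ satisfies $\delta_s^*S_0=s^2S_0$. Theorem \ref{thm:bochner} with $p=2$ then gives $\nabla^gS_0=0$ on $C^{\reg}$. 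A parallel form has constant rank, so $K_1=\ker S_0$ is a parallel, hence smooth, subbundle. By holonomy invariance, $g$ gives an orthogonal parallel complement $F_1=K_1^\perp$. Since $\nabla$ preserves $T^{1,0}$ and the algebraic structure, $K_1$ is an algebraic subbundle: it is the kernel of an algebraic form of constant rank.

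\textbf{Step 2: induction on $r$.} Suppose $K_r$ is a parallel algebraic subbundle and $b_r$ has been identified. The obstacle is that $b_r$ is defined only on $K_r$, not on all of $T_{C^{\reg}}$, so Theorem \ref{thm:bochner} does not apply directly. I would extend $b_r$ by zero on $K_r^\perp$, using the orthogonal projection $P_r$ onto $K_r$, which is parallel. This gives a form
\[
\widehat b_r:=b_r(P_r\cdot,P_r\cdot).
\]
To show $\widehat b_r$ is algebraic, choose $T$-stable algebraic complements: by the torus actions and linear reductivity, take the $T\times G_{\test}$-isotypic decomposition. Then check that $\widehat b_r$ still has character $\tau$, hence dilation weight $2$, and that it extends reflexively (using \cite[Theorem 1.4]{GKKP}). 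Theorem \ref{thm:bochner} then gives $\nabla\widehat b_r=0$.

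To avoid the algebraicity issue, an alternative is to work with the intrinsic reflexive extension of $b_r$ as a section of $\bigwedge^2K_r^*$. One would redo the Bochner argument of \S\ref{sec:bochner-cutoff} for sections of parallel subbundles; this only uses Ricci-flatness through the Chern-Bochner formula, which holds because a parallel subbundle of a Ricci-flat bundle is flat in the relevant trace. Either way, $K_{r+1}=\ker b_r$ follows from the definition of $N_{r+1}$ via the Smith form \eqref{eq:smith} at $\eta_C$ and then everywhere by constant rank. We also get $K_{\qmax+1}=0$.

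\textbf{Step 3: splitting and the symplectic form.} Take $F_i$ to be the orthogonal complement of $K_{e_i+1}$ in $K_{e_i}$. These are parallel, $T_c\times G_{\test,c}$-invariant (as $g$ is), hence stable under both complex tori. Each $\sigma_i:=\widehat b_{e_i}|_{F_i}$ is non-degenerate on $F_i$ and parallel, with character $\tau$ and $G_{\test}$-semi-invariant. Their sum $\sigma_C$ is non-degenerate, parallel, hence closed. It is algebraic and extends to a resolution by \cite{GKKP}, so $C$ is symplectic. Finally, parallel $\sigma_C$ together with the Kähler form gives $I,J,K$ via $\sigma_C=\omega_J+i\omega_K$ after the standard normalization of $|\sigma_C|$. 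Here $|\sigma_C|$ is constant, being parallel, so it can be rescaled to the hyperKähler normalization.

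The main obstacle is Step 2, namely the algebraicity and reflexivity of the extended forms $\widehat b_r$ needed to invoke Theorem \ref{thm:bochner}.
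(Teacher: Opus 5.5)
Your architecture is the paper's: induct on $r$, extend $b_r$ to $\widehat b_r=b_r(P_r\cdot,P_r\cdot)$ using the $g$-orthogonal projector, and apply Theorem~\ref{thm:bochner} to this weight-$2$ form. The gap is the step you flag as the main obstacle, and your proposed fix does not close it.

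Since $\widehat b_r$ is built from the \emph{orthogonal} projector, what must be shown algebraic is $P_r$ itself. A torus isotypic decomposition only splits weight spaces as $\C$-vector spaces. It does not split the $\OO$-module inclusion $K_r\subset T_{C^{\reg}}$, and a global algebraic complement on the non-affine $C^{\reg}$ is not automatic.

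Your fallback, redoing the Bochner argument on $\bigwedge^2K_r^*$, is also incomplete. The Chern--Bochner identity does survive on a parallel subbundle. But the cutoff argument needs the growth bound \eqref{eq:F-growth} and the $L^2$ property. In \S\ref{sec:lower} these come from $\alpha$ being a reflexive form on all of $C$ with bounded coefficients on a resolution. You have no such ambient object for $b_r$ near the singular locus.

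The missing idea is short:
\begin{itemize}
\item $P_r$ is parallel for the Chern connection, so $\bar\partial P_r=0$ and $P_r$ is a holomorphic section of $\mathcal{E}nd(T_{C^{\reg}})$.
\item It commutes with $T_c\times G_{\test,c}$, because $g$ is invariant and $K_r$ is stable under both tori ($S$ is a $T$-eigenvector and $G_{\test}$-semi-invariant).
\item By holomorphy it is then invariant under the complex tori.
\item A torus-invariant holomorphic section of this reflexive sheaf on an affine cone with good action is algebraic, since its weight components are.
\end{itemize}
Hence $\widehat b_r\in H^0(C,\Omega_C^{[2]})_\tau$, and Theorem~\ref{thm:bochner} applies as stated. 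The same argument is what makes $F_i$ and $\sigma_i$ algebraic in your Step~3, which you assert without proof.

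There are two smaller points.

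First, the reasoning ``$K_{r+1}=\ker b_r$ at $\eta_C$, hence everywhere by constant rank'' is not sufficient. The inclusion $K_{r+1}\subseteq\ker b_r$ is immediate, but a coherent subsheaf of a subbundle that agrees with it generically can be strictly smaller. You need the paper's local normal form:
\begin{itemize}
\item Near each point, lift a complement of $\ker b_r$ to $\mathcal H_r\subset\mathcal E_r$; $S_r$ is invertible on $\mathcal H_r$.
\item Its $S_r$-orthogonal $\mathcal E_{r+1}$ reduces modulo $t$ to $\ker b_r$, so $S_r|_{\mathcal E_{r+1}}=tS_{r+1}$.
\item Therefore $\mathcal E_{r+1}\subset N_{r+1}$ surjects onto $\ker b_r$.
\end{itemize}
This also carries forward the description $K_r=\mathcal E_r/t\mathcal E_r$, $b_r=S_r\bmod t$, which your induction silently uses.

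Second, a scalar rescaling of $\sigma_C$ does not in general produce a $g$-compatible $J$; think of $dz_1\wedge dw_1+2\,dz_2\wedge dw_2$ on flat $\C^4$. Write $\sigma_C(u,v)=h(u,jv)$ with $h$ the Hermitian metric. Then $-j^2$ is a parallel positive Hermitian endomorphism. You should rescale $j$ separately on each of its parallel eigenbundles to obtain holonomy in $Sp(m)$.
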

\begin{proof}
For any closed point $x$ of $C^{\reg}$, in its neighborhood 
$\mathcal{U}_x\subset \mathcal{U}$, we analyze the structures of 
$K_r$ inductively. Note the generic point 
description \eqref{eq:smith} a priori 
does not suffice to work over whole $C^{\rm reg}$. 

For any closed point $x$ of $C^{\reg}$, in its neighborhood $\mathcal{U}_x\subset \mathcal{U}$, take a local 
$S$-orthogonal decomposition as (free) $\mathcal{O}_{\mathcal{U}_x}$-modules 
\begin{equation}\label{eq:local-flag-normalform}
 \mathcal E=\bigoplus_{0\le a<r}\mathcal H_a\oplus\mathcal E_r,
 \qquad \text{ with }
 S=\bigoplus_{0\le a<r}t^aS_a\oplus t^r S_r,
\end{equation}
where $S_a$ is a non-degenerate 
alternating form on $\mathcal H_a$ and 
$S_r$ is another 
alternating form on $\mathcal E_r$. From the definitions  \eqref{eq:flag} and \eqref{eq:leading}, the decomposition identifies
\begin{equation}\label{eq:KrEr}
 K_r=\mathcal E_r/t\mathcal E_r,\qquad b_r=S_r\bmod t.
\end{equation}
Assume that $K_r$ is a parallel algebraic subbundle of 
$\mathcal{T}_{C^{\rm reg}}$. 
We prove the above by induction on $r$; assuming 
all the above statements \eqref{eq:local-flag-normalform}, 
\eqref{eq:KrEr} and the parallelness of $K_r$ 
hold for fixed $r$ (as it is clearly the case for $r=0$) 
and show that same holds for $r+1$. 

Firstly, the orthogonal projector $P_r$ of $T_{C^{\rm reg}}$ 
to $K_r$ 
is parallel,  hence holomorphic, and is invariant under the two 
compact tori. It is therefore invariant under the complex tori 
which implies the algebraicity. 
Consequently
\[
 \widetilde b_r(v,w)=b_r(P_rv,P_rw)
 \in H^0(C,\Omega_C^{[2]})_\tau,
 \qquad \langle\tau,\xi\rangle=2.
\]
Theorem \ref{thm:bochner} makes $\widetilde b_r$ parallel. Thus $\ker b_r$ has constant rank and is a parallel algebraic subbundle of $K_r$.

On small enough 
neighborhoods of $x$, 
choose an arbitrary complement bundle 
to $\ker b_r$ in $\mathcal E_r/t\mathcal E_r$ 
as 
$\overline{\mathcal H}_r$, and lift it to a direct summand $\mathcal H_r\subset\mathcal E_r$. 
From the construction, $S_r$ restricted to 
$\mathcal H_r$ is non-degenerate. 
Thus, 
\[
 \mathcal E_r=\mathcal H_r\oplus\mathcal E_{r+1},
 \qquad
 \mathcal E_{r+1}/t\mathcal E_{r+1}
 =\overline{\mathcal H}_r^{\perp_{b_r}}=\ker b_r.
\]
Hence $S_r|_{\mathcal E_{r+1}}\equiv0\pmod t$. Since $t$ is a
non-zero-divisor, the alternating form
$S_{r+1}:=t^{-1}S_r|_{\mathcal E_{r+1}}$ is regular, and
\[
 S=\left(\bigoplus_{0\le a<r}t^aS_a\right)
   \oplus t^r(S_r|_{\mathcal H_r})\oplus t^{r+1}S_{r+1}.
\]
This proves \eqref{eq:local-flag-normalform} at the next stage.
Moreover, \eqref{eq:flag} gives
\[
 N_{r+1}=\left(\bigoplus_{0\le a<r}t^{r+1-a}\mathcal H_a\right)
          \oplus t\mathcal H_r\oplus\mathcal E_{r+1},
\]
so taking the image modulo $t$ yields
$K_{r+1}=\mathcal E_{r+1}/t\mathcal E_{r+1}$ by its definition 
which then coincides with $\ker b_r$ for $b_{r+1}:=S_{r+1}\bmod t$. 

The local summands a priori may not glue, but the equality 
$K_{r+1}=\ker b_r$ holds globally since both sides are 
intrinsically 
defined sheaves. Since $\ker b_r$ is parallel and
algebraic by the preceding Bochner step, 
we complete the induction. 
In particular, the former half of \eqref{eq:Kr+1_as_ker,Kvan} 
holds. 
Further, combined 
with \eqref{eq:smith}, $K_{\qmax+1}=0.$ 

On $C^{\reg}$, let $P_r$ be the $g$-orthogonal projector (morphism) 
from 
$T_{C^{\rm reg}}$ onto $K_r$ and set 
\[
 \pr_i:=P_{e_i}-P_{e_i+1},
 \qquad F_i:={\rm Im}(\pr_i) 
       =K_{e_i}\cap K_{e_i+1}^{\perp_g}.
\]
These projectors are parallel, algebraic, and equivariant for both tori, and
\[
 T_{C^{\reg}}=\bigoplus_iF_i,
 \qquad K_r=\bigoplus_{e_i\ge r}F_i,
 \qquad F_i\xrightarrow{\sim}K_{e_i}/K_{e_i+1}.
\]
Since $\ker b_{e_i}=K_{e_i+1}$, the restriction $b_{e_i}|_{F_i}$
is nondegenerate. Define
\[
 \sigma_i(v,w):=b_{e_i}(\pr_i v,\pr_i w).
\]
Then
\[
 \sigma_i|_{F_i}=b_{e_i}|_{F_i},
 \qquad \ker\sigma_i=\bigoplus_{j\ne i}F_j,
 \qquad \nabla\sigma_i=0.
\]
Since $S$ is $T$-eigenvector with character $\tau$, $N_r, K_r$ 
and hence $F_i$ are all $T$-invariant. $\sigma_i$ is again 
$T$-eigen with the character $\tau$ and the same holds for 
the sum 
$\sigma_C:=\sum_i\sigma_i$. 
Since $\sigma_C$ is nondegenerate and parallel, it is also 
$d$-closed. 
These algebraic forms define reflexive forms on $C$;
\cite[Theorem 1.4]{GKKP} extends them regularly to any resolution. 
Because of the $g$-parallelness of the holomorphic symplectic form 
$\sigma_C$ on $C^{\rm reg}$, 
the last hyperK\"ahler assertion also holds. 
\end{proof}

\begin{proposition}\label{prop:regularPi}
The relative tensor
\[
\Pi:=t^{\qmax}S^{-1}\in\Gamma(\mathcal U,\textstyle\bigwedge^2T_{\mathcal U/\A^1_t})
\]
defines a regular Poisson bracket on $\cR$. Its restriction to $C$ is $\sigma_k^{-1}\oplus0$ in the above splitting. Its $T$-character is $-\tau$, and it is $G_{\test}$-semi-invariant. 
\end{proposition}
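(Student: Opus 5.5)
The plan is to compute $\Pi$ locally from the normal form \eqref{eq:local-flag-normalform} near $C^{\reg}$, to use invertibility of $t$ away from $C$, and then to pass from $\mathcal U$ to $\cW$ by normality.

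\emph{Regularity on $\mathcal U$.} Near a closed point $x\in C^{\reg}$, I run the inductive construction of the proof of Proposition~\ref{prop:blocks} up to $r=\qmax+1$. Since $K_{\qmax+1}=0$, Nakayama gives $\mathcal E_{\qmax+1}=0$ near $x$. This yields an $S$-orthogonal decomposition $\mathcal E=\bigoplus_{i}\mathcal H_{e_i}$ with $S=\bigoplus_i t^{e_i}S_{e_i}$, where each $S_{e_i}$ is a non-degenerate alternating form on the free module $\mathcal H_{e_i}$. Inverting blockwise gives
\[
t^{\qmax}S^{-1}=\bigoplus_i t^{\qmax-e_i}S_{e_i}^{-1},
\]
which is a regular section of $\bigwedge^2\mathcal E$ near $x$, since $\qmax-e_i\ge0$. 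On $\mathcal U\setminus C\simeq W^{\reg}\times(\A^1\setminus0)$, the element $t$ is a unit and $S=t^a\sigma_W$ is non-degenerate, so $\Pi=t^{\qmax-a}\sigma_W^{-1}$ is regular there too. Hence $\Pi\in\Gamma(\mathcal U,\bigwedge^2T_{\mathcal U/\A^1})$.

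\emph{Central fibre.} Modulo $t$, every block with $e_i<\qmax$ dies, and only $S_{\qmax}^{-1}\bmod t$ on $\mathcal H_{\qmax}/t\mathcal H_{\qmax}=K_{\qmax}=F_k$ survives. Here $b_{\qmax}$ restricts to $\sigma_k$ on $F_k$. To match the bivector with $\sigma_k^{-1}\oplus0$ in the splitting $T_{C^{\reg}}=\bigoplus F_i$, I must check the following. The images of the $\mathcal H_{e_i}\bmod t$ for $e_i<\qmax$ are exactly the $\sigma_k$-kernel $\bigoplus_{j<k}F_j$ modulo $F_k$. The inverted form on the residual block is then independent of the chosen complements. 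This follows because $\Pi|_C$ is intrinsic: it is the unique bivector whose contraction with $b_{\qmax}$ is the identity on $K_{\qmax}$ and which annihilates the annihilator of $K_{\qmax}$. This identification is the step I expect to need the most care, because the local complements $\mathcal H_r$ were arbitrary and do not glue.

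\emph{Bracket on $\cR$ and Jacobi.} Define $\{f,g\}=\Pi(df,dg)$ for $f,g\in\cR$; this is a regular function on $\mathcal U$. The complement $\cW\setminus\mathcal U$ consists of $(W^{\mathrm{sing}}\times(\A^1\setminus0))\cup C^{\mathrm{sing}}$. It has codimension $\ge2$ in $\cW$: $W^{\mathrm{sing}}$ has codimension $\ge2$ in $W$ because $W$ is normal, and $C^{\mathrm{sing}}$ has codimension $\ge3$ in $\cW$ because $C$ is normal. Since $\cW$ is normal, the bracket extends and takes values in $\cR$. It is relative, so $t$ is a Casimir.

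The Jacobi identity holds on $t\ne0$, because there the bracket equals the Poisson bracket $\sigma_W^{-1}$ of the closed form $\sigma_W$, multiplied by the Casimir $t^{\qmax-a}$, and such a rescaling preserves Jacobi. Since $\cR$ is $t$-torsion free by flatness, the Jacobiator vanishes identically on $\cR$. Leibniz and antisymmetry are automatic from the bivector form.

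\emph{Equivariance.} The form $S$ has $T$-character $\tau$ and is $G_{\test}$-semi-invariant, and $t$ is $T$-invariant and $G_{\test}$-semi-invariant of weight $d$. Hence $S^{-1}$ has character $-\tau$, and $\Pi=t^{\qmax}S^{-1}$ has $T$-character $-\tau$ and is $G_{\test}$-semi-invariant.
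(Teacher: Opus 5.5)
Your argument is correct and is essentially the paper's proof. Both get regularity of $t^{\qmax}S^{-1}$ from the elementary-divisor structure of $S$, extend the bracket across the codimension-two complement of $\mathcal U$ by normality of $\cW$, get Leibniz and Jacobi from the punctured family via $t$-torsion-freeness, and read off the central fiber from the local decomposition of Proposition~\ref{prop:blocks}.

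There are two minor differences. First, you obtain regularity on $\mathcal U$ pointwise by running that local normal form up to $r=\qmax+1$ with Nakayama; the paper instead uses the generic-point form \eqref{eq:smith} plus the absence of horizontal poles, which is a codimension-one check on the regular scheme $\mathcal U$. Second, you make explicit why $\Pi|_{C^{\reg}}$ does not depend on the chosen local complements: it is the inverse of $b_{\qmax}$ inside $\bigwedge^2K_{\qmax}=\bigwedge^2F_k$. The paper leaves this implicit.
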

\begin{proof}
The largest pole order of $S^{-1}$ along $C$ is $\qmax$ by \eqref{eq:smith}, and there are no horizontal poles. Hence $\Pi$ is regular on $\mathcal U$. Its values on regular functions extend across the complement, of codimension at least two, by normality. Leibniz and Jacobi hold on the punctured family and hence everywhere. The local decomposition in Proposition \ref{prop:blocks} shows that, 
after multiplying $S^{-1}$ by $t^{\qmax}$ 
and restricting to the central fiber $C^{\reg}$, only the summand corresponding 
to $e_i=\qmax$ survives. Hence $\Pi_0=\sigma_k^{-1}\oplus0$. 
\end{proof}

With $X_f=\{f,-\}=\Pi^\sharp(df)$ and $S^\flat(v)=\iota_vS$, our inverse convention gives
\begin{equation}\label{eq:inverse}
S^\flat\Pi^\sharp=t^{\qmax}\id.
\end{equation}
Thus a regular bracket is available even though $S$ itself may be degenerate on the central fiber. 

\subsection{Algebraically integrable leaves}\label{sec:leaves}
Note that each $F_i$ is a foliation, since it is the kernel of the closed form $\alpha_i=\sum_{j\ne i}\sigma_j$. Assume $k\ge2$; when $k=1$ the product cover below is the identity. This subsection 
aims to show the following. 
\begin{lemma}\label{lem:leaves}
The general leaves of $F_i$ are algebraically integrable.
\end{lemma}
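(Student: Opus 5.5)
We want to show that the general leaves of each parallel foliation $F_i\subset T_{C^{\reg}}$ from Proposition \ref{prop:blocks} are algebraic, i.e.\ their Zariski closures have dimension $\rk F_i$. The natural tool is an algebraicity criterion for foliations with a positivity property, for instance the Bost / Campana--P\u{a}un criterion. It applies after passing to a projective quotient of the cone. We will, however, first try a more elementary route that exploits the cone structure and the parallel splitting directly.

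\textbf{Step 1: local product structure.} On $C^{\reg}$ the splitting $T_{C^{\reg}}=\bigoplus_iF_i$ is $g$-parallel and each $F_i$ is holomorphic. By the de Rham decomposition, every point of $C^{\reg}$ has a neighbourhood that is holomorphically isometric to a product of hyperK\"ahler pieces tangent to the $F_i$. Each piece carries the parallel symplectic form $\sigma_i$, and the leaves of $F_i$ are the slices.

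\textbf{Step 2: radial and Reeb fields are tangent to each leaf.} Since $\nabla R=\id$, the decomposition $R=\sum_i R_i$ with $R_i=\pr_i R$ satisfies $\nabla R_i=\pr_i$. Hence each $R_i$ is the Euler field of the $i$-th factor. Moreover $\iota_{E}\sigma_i$ is a homogeneous reflexive $1$-form of weight $1$. A $T$-eigenfunction $\phi_i$ with $d\phi_i$ proportional to $\iota_{E}\sigma_i$ does not exist directly. Instead we consider the functions $r_i^2:=|R_i|^2_g$. These are K\"ahler potentials for the $i$-th factor and satisfy $dd^c(r_i^2/2)=\omega|_{F_i}$, which is $T_c$-invariant.

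\textbf{Step 3: algebraic leaf functions.} The key is to produce enough algebraic first integrals of $F_i$, namely regular functions on $C$ annihilated by $F_i$. For $j\ne i$, the $j$-th factor carries a holomorphic symplectic form $\sigma_j$ of Reeb weight $2$. Homogeneous regular functions that are Hamiltonian for vector fields in $\bigoplus_{j\ne i}F_j$ come from the Poisson bracket: use the regular Poisson structure $\Pi$ of Proposition \ref{prop:regularPi}, or more generally the brackets $\sigma_j^{-1}$. These are algebraic bivectors because $\sigma_j$ and the projectors $\pr_j$ are algebraic. Define
\[
\mathcal A_i:=\{f\in\Gamma(C,\OO_C)\mid \pr_i(\sigma_C^{-1})^\sharp(df)=0\},
\]
which is the subalgebra of functions constant along $F_i$. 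It is $T$-graded and is the Poisson centralizer of the subalgebra attached to $F_i$. Because $\sigma_C^{-1}=\sum_j\sigma_j^{-1}$ and the pieces commute (the splitting is parallel, hence involutive and locally a product), the leaves of $F_i$ are contained in fibres of $\Spec\mathcal A_i$. It then remains to show that $\operatorname{trdeg}\mathcal A_i=n-\rk F_i$.

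\textbf{Main obstacle.} The hard step is this transcendence-degree count, i.e.\ producing enough algebraic first integrals. For it we would use $T$-equivariance and the cone structure. Take a quasi-regular Reeb vector $\chi$ close to $\xi$ in the Lie algebra of $T$. The foliation then descends to an algebraic foliation on the log Fano quotient $(X_\chi,D_\chi)$. Since the $F_i$ are parallel for the Ricci-flat cone metric, the descended foliation inherits a transverse metric, positive in the transverse directions, from the K\"ahler--Einstein (conic) structure. We would then apply the algebraicity criterion of Bost / Campana--P\u{a}un: a foliation whose restriction to a general complete-intersection curve is ample has algebraic leaves. Ampleness should come from the positive curvature of the Fano base, because each $F_i$ contains its own radial direction $R_i$ and therefore descends with positive curvature. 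The singular set of codimension at least two is avoided by taking general curves. If the positivity argument fails, the fallback is to lift to the universal cover of $C^{\reg}$, which is finite by \cite{Braun}, together with a holonomy argument: the local de Rham factors become global, and Zariski density of the $T$-orbits upgrades the analytic leaf closures to algebraic ones.
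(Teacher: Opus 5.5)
\textbf{There is a genuine gap.} Your main line has the right skeleton, and it is the paper's: pass to the quotient by a quasi-regular $\chi$ near $\xi$, show the projected foliation is positive, then invoke a Bost/Campana--P\u{a}un/Bogomolov--McQuillan criterion. But the positivity step, which is the whole difficulty, rests on a mechanism that does not work.
\begin{itemize}
\item For irregular $\xi$, the cone metric $g$ induces on $(X_\chi,D_\chi)$ only a weighted soliton, not a K\"ahler--Einstein metric. This pair need not even be K-semistable.
\item Even for $\chi=\xi$, positive Ricci curvature of the base gives no lower bound on quotient slopes of a subsheaf $\mathcal F_i\subset T_{X_\chi}$ that is not parallel there. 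The splitting is parallel on the cone, not on the quotient. For $\C^2\times\C^2$, the projected foliation on $\mathbb P^3$ is the pencil of planes through a line, which is not Fubini--Study parallel.
\item The criterion needs positivity of the foliation itself, not of transverse directions.
\end{itemize}

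The missing idea is to work on the canonical (Atiyah) extension $0\to\OO\to\mathcal A\to T_{(X_\chi,D_\chi,f)}\to0$ on an adapted cover. This is the descended tangent sheaf of the cone, and there the parallel splitting is an honest direct sum $\mathcal A=\bigoplus_i\mathcal A_i$. The Xu--Zhuang bound $\mu_{\min}(\mathcal A)\ge\min\{1,\delta\}(f^*H)^{n-1}/n>0$ holds for every log Fano pair and passes to each summand. The map $\mathcal A_i\to T$ is generically injective because $Z_\lambda\notin F_i$. That follows from $L_{Z_\lambda}\alpha_i=\tau(\lambda)\alpha_i\neq0$, so you must choose $\chi$ with $\tau(\chi)>0$. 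Only then do you get $\mu_{\min}(\mathcal F_i)>0$, and, via Mehta--Ramanathan--Flenner, ampleness on general complete-intersection curves.

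Second, you never return from the quotient to the cone. Algebraic integrability on $X_\chi$ only makes the leaves of $F_i\oplus\C Z_\chi$ algebraic, and these have dimension $\rk F_i+1$. Cutting them down is not formal: a $\C^*$-invariant codimension-one foliation transverse to the orbits can have transcendental leaves. For example, take $\ker(dy/y-\beta\,dx/x)$ on $(\C^*)^2$ with $\beta\notin\Q$ and $\C^*$ scaling $y$; its image in the quotient is trivially algebraic.

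The paper supplies the extra first integral from the closed form $\alpha_i=\sum_{j\ne i}\sigma_j$, which has \emph{nonzero} weight. Its primitive $\theta_i=\tau(\lambda)^{-1}\iota_{Z_\lambda}\alpha_i$ kills $F_i\oplus\C Z_\lambda$, so $\theta_i=\sum_j g_j\,dh_j$ with the $h_j$ pulled back from the quotient. The $g_j$ are first integrals of $F_i$ with $Z_\lambda g_j=\tau(\lambda)g_j$, and some $g_1\neq0$ separates the $F_i$-leaves inside the larger ones.

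Your Step 3 only restates the problem, since the transcendence-degree count is equivalent to the lemma. The fallback also fails. The de Rham decomposition needs completeness, which fails on $C^{\reg}$ and on its finite universal cover near the singular set, so the local product structure does not globalize. Zariski density of torus orbits does not by itself turn analytic leaf closures into algebraic ones.
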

\begin{proof}
We use an auxiliary algebraic quotient by $\C^*$. 
Choose a positive rational Reeb direction $\upsilon\in N\otimes_\Z\Q$ sufficiently close to $\xi$ that $\tau(\upsilon)>0$, and a positive integral multiple $\lambda\in
N\cap\Q_{>0}\upsilon$ defining a quotient action. The resulting
quotient (Deligne-Mumford) stack is denoted as
$\mathcal{X}_{\upsilon}$. Its coarse space
$(X_\upsilon,\Delta_\upsilon)$ is log Fano with the natural boundary $\Delta$ with standard coefficient:
let $e$ be the multiplicity of the Seifert bundle $q\colon
C^{\times}\to X_\upsilon$
structure along
the boundary component, so its coefficient in $\Delta_\upsilon$ is $1-
1/e$.
It need not be K-semistable. The quotient has dimension $n-1$; put $H=-(K_{X_\upsilon}+\Delta_\upsilon)$.
Let
\[
q:C^\times:=C\setminus\{0\}\longrightarrow X_\upsilon
\]
denote the auxiliary $\Gm$-quotient defined by $\lambda$.
Following \cite[\S5.1]{XZ}, choose a finite Galois cover
$f:Y_\upsilon\to X_\upsilon$ adapted to $(X_\upsilon,\Delta_\upsilon)$,
meaning that $f^*\Delta_\upsilon$ has integral coefficients.
Such a cover exists by \cite[Definition 2.37 and Proposition 2.38]{CKT21}.
Write $\Omega^{[1]}_{(X_\upsilon,\Delta_\upsilon,f)}$ for the adapted
reflexive cotangent sheaf on $Y_\upsilon$, as in
\cite[Definition 3.11]{CKT21} and \cite[Definition 5.2]{XZ}.

Let $M$ be the ramification index of the chosen adapted cover;
adaptedness gives $e\mid M$. Then, the
defining equations of the boundary components are written generically by $x, w$ on $X_\upsilon, Y_\upsilon$ respectively with
$x=w^M$. We have the boundary
coefficient $1-1/e$ downstairs and $e\mid M$, so it is generated by
$w^{M/e-1}dw$ and the differentials of the remaining coordinates;
the resulting locally free sheaf extends reflexively to
$Y_\upsilon$.
Its dual
\[
T_{(X_\upsilon,\Delta_\upsilon,f)}
:=\bigl(\Omega^{[1]}_{(X_\upsilon,\Delta_\upsilon,f)}\bigr)^\vee
\]
is the adapted tangent sheaf on $Y_\upsilon$; see also
\cite[Definition 4.1 and Remark 4.2]{CKT21} and
\cite[Definition 2.23]{GT22}.
Here $T_{X_\upsilon,\mathrm{orb}}$ denotes the orbifold tangent
sheaf of the quotient pair, interpreted through these adapted covers
as in \cite[Definition 5.7 and Remark 5.4]{XZ}.

On $C^\times$, its infinitesimal generator $Z_\lambda$ spans
the vertical tangent direction, and one has the exact sequence
\[
0\longrightarrow
\OO_{C^\times}\cdot Z_\lambda
\longrightarrow
T_{C^\times}
\longrightarrow
\mathcal{G}
\longrightarrow0,
\]
where $\mathcal{G}$ can be written as
$q^*T_{\mathcal{X}_\upsilon}$ for the stack $\mathcal{X}_\upsilon$.
Descending this sequence in the orbifold charts and pulling it
back to $Y_\upsilon$ gives the canonical tangent extension
(cf., \cite[\S2.3, Proposition 2.10]{LiExt}), extended
reflexively as in
\cite[Definition 5.3 and Remark 5.4]{XZ}. The following sequence is exact on
a big open subset of $Y_\upsilon$ containing all codimension-one points; its
terms and maps are then extended reflexively, with the same notation:

\begin{align}\label{eq:canonical-extension}
0\longrightarrow
\OO_{Y_\upsilon}
\longrightarrow
\mathcal A
\longrightarrow
T_{(X_\upsilon,\Delta_\upsilon,f)}
\longrightarrow0.
\end{align}
All terms in \eqref{eq:canonical-extension} are sheaves on
$Y_\upsilon$. 
The first term is the descent of the vertical
$Z_\lambda$-direction. Let $L_\upsilon$ be the ample
$\Q$-polarization defining the Seifert bundle $q$. 
Since $H$ is a positive rational multiple of $L_\upsilon$, the
extension is isomorphic to the dual of
$E_{(X_\upsilon,\Delta_\upsilon,f)}$, the canonical cotangent
extension with class $f^*c_1(K_{X_\upsilon}+\Delta_\upsilon)$
\cite[Definition 5.3]{XZ}.
For a torsion-free sheaf, let $\mu_{\min,H}$ denote the infimum of slopes of positive-rank torsion-free quotients. Dualizing \cite[Lemma 5.9]{XZ} gives
\begin{equation}\label{eq:slope}
\mu_{\min,f^*H}(\mathcal A)\ge\frac{\min\{1,\delta(X_\upsilon,\Delta_\upsilon)\}(f^*H)^{n-1}}{n}>0.
\end{equation}
For the last inequality, we only need general positivity of 
the delta invariant
$\delta(X_\upsilon,\Delta_\upsilon)$ of a log Fano pair, 
introduced and shown in \cite{FujitaOdaka, BlumJonsson}. 

The splitting $T_{C^{\reg}}=\bigoplus_i F_i$ is invariant
under the quotient $\mathbb G_m$-action.
Hence its projectors descend to idempotent endomorphisms
of the Atiyah bundle on the orbifold quotient stack. 
These projectors pull back to 
$\mathcal A$ on $Y_\upsilon$: in codimension one, the 
local orbifold and adapted-cover coordinates satisfy 
$x=z^e=w^M$ with $e\mid M$, 
so the pullback $z=w^{M/e}$ preserves regularity
of the projectors in the orbifold bundle charts
(cf.\ \cite[Remark 5.4]{XZ}). 
They extend uniquely by reflexivity, giving 
\[
\mathcal A=\bigoplus_i\mathcal A_i,
\qquad
\mathcal A_i:=\operatorname{Im}(\overline P_i). 
\]
Summarizing up, the splitting $T_{C^{\rm reg}}=\oplus_i F_i$
descends to direct summands 
$\mathcal A_i$ of $\mathcal{A}$.
Their projections through \eqref{eq:canonical-extension} are generically injective: otherwise $Z_\lambda$ would lie in $F_i={\rm Ker}\alpha_i$, contradicting $d\iota_{Z_\lambda}\alpha_i=L_{Z_\lambda}\alpha_i=\tau(\lambda)\alpha_i\ne0.$
Let $\mathcal F_i\subset T_{X_\upsilon}$ be the saturated projected foliation. For local coordinates
$x$ and $w$ of $X_\upsilon$ and $Y_\upsilon$ respectively,
along the branch divisors we have
$x=w^M$ for the cover $f$
with boundary coefficient $1-1/e$ of $\Delta_\upsilon$, its normal
generator maps from $T_{(X_\upsilon,\Delta_\upsilon,f)}$
to $f^{[*]}T_{X_\upsilon}$
as
\[
w^{1-M/e}\partial_w\longmapsto Mw^{M-M/e}\partial_x.
\]
Combining to the natural projection $\mathcal{A}_i$ to $T_{(X_\upsilon,\Delta_\upsilon,f)}$, we obtain a natural
morphism $\mathcal{A}_i\to f^{[*]}T_{X_\upsilon}$.

For every positive-rank torsion-free quotient $Q$ of $\mathcal F_i$, the image
\[
 \mathcal I_Q=\im\bigl(\mathcal A_i\longrightarrow f^{[*]}Q\bigr)
\]
is a quotient of $\mathcal A_i$ of the same rank as $Q$. Here $f^{[*]}Q=(f^*Q/\mathrm{torsion})^{**}$; maps are first constructed in codimension one and extended to this reflexive target. Reflexive extension does not change slopes. Let $c>0$ denote the lower bound for $\mu_{\min,f^*H}(\mathcal A)$ in \eqref{eq:slope}. Since $\mathcal A_i$ is a direct summand, it has the same lower bound on quotient slopes. The inclusion of equal-rank sheaves gives
\[
 \deg(f)\,\mu_H(Q)\ge\mu_{f^*H}(\mathcal I_Q)\ge c>0.
\]
Thus $\mu_{\min,H}(\mathcal F_i)\ge c/\deg(f)>0$, uniformly over $Q$. By the Mehta-Ramanathan-Flenner restriction theorem in the characteristic-zero form of \cite{Flenner}, applied to the Harder-Narasimhan filtration, the restriction of $\mathcal F_i$ to a sufficiently general complete-intersection curve cut out by $n-2$ divisors in $|mH|$ is ample; see \cite[Theorem 2.4]{Hartshorneample}. The curve avoids the singular loci of the quotient and that of the foliation. The Bogomolov-McQuillan criterion \cite{BM} in \cite[Theorem 1]{KST} applies directly on a normal projective variety. Such complete-intersection 
curves sweep out whole $X_v$, so the criterion proves algebraic integrability of $\mathcal F_i$. 

Now we want to lift this conclusion from the quotient to the cone
$C$ (note that the $Z_\lambda$-direction is not contained in
any of $F_i$, as far as $\#\{i\}>1$, and the preimage of
the integrated foliation of $\mathcal{F}_i$ is too big). 
Let us fix $i$.
Put $r_i=\rk F_i$, $w=\tau(\lambda)>0$, and set
\begin{equation}\label{eq:leaf-primitive}
\theta_i:=\frac1w\iota_{Z_\lambda}\alpha_i.
\end{equation}
Then $d\theta_i=\alpha_i$ follows from $d\alpha_i=0$ and
$L_{Z_\lambda}\alpha_i=w\alpha_i$. Set $s=n-1-r_i$. A rational
first integral of the projected
foliation $\mathcal F_i$ means a rational function
$H\in\C(X_\upsilon)$ satisfying $u(H)=0$ for every rational vector
field $u$ tangent to $\mathcal F_i$. The simple fact $\iota_{Z_\lambda}\theta_i=0$ plays a key role below.

Since $\mathcal F_i$ is algebraically integrable, there is a
dominant
rational leaf-space map $p_i:X_\upsilon\dashrightarrow B$ with
$\dim B=s$, whose general fibers have the leaves as dense open
subsets. Choose a transcendence basis of $\C(B)$ over $\C$ and let
$H_1,\ldots,H_s\in\C(X_\upsilon)$ be its pullbacks.
These are independent rational first integrals: here independence
means algebraic independence over $\C$, or equivalently, in
characteristic zero, linear independence of $dH_1,\ldots,dH_s$
at the generic point. Put $h_j:=q^*H_j\in\C(C).$
Their general common level sets on $C$ have dimension
$n-s=r_i+1$, hence we need
one more function (first integral)
to cut against the redundant $\Gm$-direction.

At the generic point of $C$, the independent forms $dh_1,\ldots,dh_s$
form a basis of the annihilator of
$F_i+\C(C)Z_\lambda$, since this annihilator has dimension $s$.
For the desired algebraic integrability of $F_i$,
we want to construct additional rational function, which will be denoted as $g_1$, so that
$dh_1,\ldots,dh_s,dg_1$ span the annihilator of
$F_i$, from $\alpha_i$ and its interior derivative
$\theta_i$.

As $\theta_i$ annihilates both $F_i$ and $Z_\lambda$, there are
unique
rational functions
$g_j\in\C(C)$ such that
\begin{equation}\label{eq:leaf-integrals}
\theta_i=\sum_{j=1}^s g_j\,dh_j.
\end{equation}
Now, we want to see the behaviour of $g_j$ along the $Z_\lambda$-
direction.
For $v\in F_i$, from the definition of $\theta_i$ i.e., \eqref{eq:leaf-primitive} and equations below
give $L_v\theta_i=\iota_vd\theta_i+d\iota_v\theta_i=0$.
Since $v(h_j)=0$, differentiating \eqref{eq:leaf-integrals} yields
$\sum_jv(g_j)\,dh_j=0$, hence $v(g_j)=0$ i.e., $g_j$
is constant along the $F_i$-leaves.
Similarly, $\theta_i$ is also homogeneous of $Z_\lambda$-weight $w$ i.e., $L_{Z_\lambda}\theta_i=w\theta_i$. Using
$Z_\lambda h_j=0$, we can also confirm $g_j$ also has the same
homogeneity i.e., $Z_\lambda g_j=w g_j.$

In summary,
each $g_j$ is also constant along the $F_i$-leaves, whereas
it has weight $w$ in the quotient-action direction.
Since $Z_\lambda\notin F_i=\ker\alpha_i$ as we previously explained, one has $\theta_i\ne0$
so some coefficient, say $g_1$, is nonzero. Moreover,
\[
\iota_{Z_\lambda}dg_1=w g_1\ne0,
\qquad \iota_{Z_\lambda}dh_j=0.
\]
Consequently $dg_1$ is independent of $dh_1,\ldots,dh_s$.
The rational map
\[
\Phi=(h_1,\ldots,h_s,g_1):C\dashrightarrow\A^{s+1}
\]
has generic differential of rank $s+1=n-r_i$, with kernel $F_i$.
On a suitable dense open subset, the connected components of its
general fibers are therefore leaves of the restricted foliation.
Their closures in $C$ coincide with the closures of the corresponding
general leaves and have dimension $r_i$, proving generic
algebraic integrability of $F_i$.
\end{proof}

\subsection{The finite product cover}\label{sec:cover}
Now we want to apply the technique of 
\cite[Theorem 25]{DGP} of somewhat deRham 
decomposition type result, to get some 
product structure using  
the above algebraic foliations 
$F_i$s (Lemma \ref{lem:leaves}). We add some extra arguments to 
refine to the following, in our symplectic cone setup. 

Write $C=\Spec A$, which we assume to be not 
smooth (otherwise the main assertions are trivial) and  $U=C^{\reg}\setminus\{o\}$. Keep the notations from 
Proposition \ref{prop:blocks} and Lemma \ref{lem:leaves}. Put
$n_i=\rk_\C F_i$, which is even. The original tori $T$ and $G_{\test}$ 
preserve each summand $F_i$ and its projector. 
Each $T$-character space $A_\chi$ is finite-dimensional. Moreover, only
finitely many Reeb weights occur in a bounded interval: finitely many
positive-weight homogeneous generators have only finitely many monomials
of bounded weight. Thus we may exponentiate character-preserving derivations
and induct on the original real Reeb weights, without changing $T$ or $\xi$.

\begin{theorem}[Finite cover as product]\label{thm:cover}
There is a finite Galois morphism
\[
 \phi:V=\prod_{i=1}^kV_i\longrightarrow C
\]
from a product of normal affine conical symplectic varieties $(V_i,\omega_i)$ such that
$\phi^*\sigma_i=\pr_i^*\omega_i$ on $V^{\reg}$, 
which integrates the pullback of each $F_i$. 
The deck group $\Gamma$ acts on each $V_i$ 
by symplectic automorphisms 
with respect to $\omega_i$. 

Further, $\phi$ is \'etale outside (complex) codimension $4$. 
For some finite isogenies $\alpha:T'\to T$ and 
$\beta:G'_{\test}\to G_{\test}$, both $T'$ and $G'_{\test}$ acts on 
$V$ commutatively (also commuting with $\Gamma$-action) 
such that the former action is good, 
$\phi$ is $G'_{\test}\times T'$-equivariant, and preserve 
each $(V_i,\C\omega_i)$. 
\end{theorem}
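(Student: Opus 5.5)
We follow the strategy of \cite[Theorem 25]{DGP}, now equivariantly and with the symplectic blocks $\sigma_i$ of Proposition \ref{prop:blocks}. The inputs are: the parallel splitting $T_{C^{\reg}}=\bigoplus_iF_i$, the generic algebraic integrability of each $F_i$ (Lemma \ref{lem:leaves}), and the finiteness of $\pi_1(U)$ from \cite{Braun}.

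\emph{Local product.} The $F_i$ are parallel and mutually orthogonal. By the local de Rham theorem, every point of $C^{\reg}$ then has a neighborhood that is a metric product of pieces of $F_i$-leaves, and $\sigma_i$ is pulled back from the $i$-th factor. Since $\pi_1(U)$ is finite, we pass to the universal cover $\widetilde U\to U$. Its algebraic closure over $C$ is a finite quasi-\'etale Galois cover $C'\to C$, normal and affine; the $T$-action lifts after a finite isogeny, since Lie algebra actions lift and only the kernel on the finite fibre must be absorbed. On $C'$ the leaf holonomy is trivial.

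\emph{Leaf spaces and product.} For each $i$ we use Lemma \ref{lem:leaves}: the leaves of $\bigoplus_{j\ne i}F_j$ on $C'$ are also algebraic, since these directions are the sum of the other algebraic foliations. We then define $V_i$ as the normalization of the affine leaf space of $\bigoplus_{j\ne i}F_j$. Concretely, $V_i=\Spec$ of the integral closure of the subring of $\Gamma(C',\OO)$ consisting of functions annihilated by $\bigoplus_{j\ne i}F_j$. Each torus-eigenfunction in this ring has positive weight, so $V_i$ is a cone with a good action. Each $\sigma_i$ is parallel and vanishes on the other $F_j$. Hence it descends to a symplectic form $\omega_i$ on $V_i^{\reg}$ of character $\tau$, and it extends to resolutions by \cite[Theorem 1.4]{GKKP} applied to the pullback. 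This makes $V_i$ a conical symplectic variety. The natural map $C'\to\prod_iV_i$ is a local isomorphism in codimension one, because of the local product structure. It is therefore birational and finite onto a normal variety, hence an isomorphism by Zariski's Main Theorem. We set $V:=C'$ with this product structure. The deck group $\Gamma$ preserves each foliation and each $\sigma_i$, so it acts factorwise by symplectic automorphisms. The finite isogenies $T'$ and $G'_{\test}$ act factorwise as well, since they preserve each $F_i$ and scale each $\sigma_i$.

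\emph{Codimension $4$.} The map $\phi$ is étale outside $\phi^{-1}(\mathrm{Sing})$. Conical symplectic singularities have singular locus of even codimension, by the stratification into symplectic leaves. Hence the branch locus, which is a union of singular strata of $C$ contained in the non-étale locus, has codimension $\ge2$ and even. We must exclude codimension $2$. Along a codimension-$2$ stratum the transversal slice is a Kleinian singularity. If $\phi$ were branched there, the cover would locally be an étale-in-codimension-one cover of a surface ADE slice, hence a partial resolution of the local fundamental group. We exclude this by choosing $C'$ minimal: we take only the Galois closure required to trivialize the holonomy of the foliations, that is, the image of $\pi_1(U)$ in the holonomy group. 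Holonomy of a parallel splitting along a Kleinian slice is trivial, since the slice lies inside a single factor $F_i$ transversally. This step, showing that no branching occurs over codimension-two strata, is the one we expect to be the main obstacle. Our first approach is the local analysis of the foliation on the transversal ADE slice using the product form of $\sigma_C$.
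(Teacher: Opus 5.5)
\textbf{Gap 1: constructing the factors.} Your proposal has a genuine gap at its core step. You take $V_i$ to be (the normalization of) $\Spec$ of the ring of regular functions on $C'$ annihilated by $\bigoplus_{j\ne i}F_j$. You then assert that $C'\to\prod_iV_i$ is finite, a local isomorphism in codimension one, and birational. None of this follows from Lemma \ref{lem:leaves} and the local de Rham splitting.

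\begin{itemize}
\item Algebraic integrability only gives \emph{rational} first integrals. On a cone, the ring of \emph{regular} first integrals can collapse to $\C$: the foliation of $\C^2$ by lines through $0$ is algebraically integrable, yet every leaf closure contains the vertex. Even finite generation of such a ring is not automatic.
\item So you have no argument that this ring has dimension $n_i$, nor that the joint map is finite.
\item A local analytic product structure gives at best a generically \'etale map. Birationality is a global statement and does not follow.
\end{itemize}

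The paper supplies exactly the missing mechanism, which is metric and global:
\begin{itemize}
\item It uses the algebraic derivations $D_i=E-P_iE$. Their flows preserve the compact set $K=\overline{\{x\in U: r(x)\le1\}}$ in imaginary time and contract it in negative real time. This forces $D_i$ to be semisimple with nonnegative real spectrum on each $A_\chi$.
\item Hence $B_i=\ker D_i\simeq A/J_i$ is finitely generated and normal, and there are retractions $\rho_i:C\to Q_i$.
\item Both directions of the radius comparison are then used. First, $r_\xi(j_i\rho_i)\le\kappa\sqrt{h_i}$, with $h_i$ constant along $F_j$-leaves for $j\ne i$, makes $\rho_i$ bounded, hence constant, on their Zariski closures. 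Second, $r_\xi^2\le\kappa^4\sum_i r_\xi(j_i\rho_i)^2$ gives $V(B_+A)=\{o\}$. Together these show that $p=(\rho_1,\ldots,\rho_k)$ is finite and surjective with $\dim Q_i=n_i$.
\item Quasi-\'etaleness comes from the ramification count for the symplectic volume form.
\end{itemize}
The paper never claims $p$ is birational. The product structure is extracted afterwards from $\pi_1\bigl(\prod_iQ_i^{\reg}\bigr)$.

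\textbf{Gap 2: the codimension-four claim.} Starting from the universal cover of $C^{\reg}$ is incompatible with this claim. For $k=1$ the theorem's cover must be the identity. But for a Kleinian $C=\C^2/G$, the universal cover $\C^2\to C$ is branched in codimension two. The same happens whenever a codimension-two quotient stratum lies inside a single factor.

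You notice this, but switching to a ``minimal'' cover abandons the simple connectivity you relied on earlier. The holonomy heuristic on the Kleinian slice is also not a proof.

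The paper's fix is group-theoretic:
\begin{itemize}
\item Let $H\subset\Pi=\prod_i\pi_1(Q_i^{\reg})$ correspond to the connected cover $p^{-1}\bigl(\prod_iQ_i^{\reg}\bigr)$.
\item Set $H_i=H\cap\Pi_i$, let $V_i$ be the normalized cover of $Q_i$ corresponding to $H_i$, and set $\Gamma=H/\prod_iH_i$.
\item By construction, every $\gamma\ne1$ acts nontrivially on at least two factors.
\item A nontrivial finite-order symplectic automorphism of $V_i$ has fixed locus of codimension $\ge2$. Hence every fixed locus in $V$ has codimension $\ge4$.
\end{itemize}

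\textbf{Smaller omission.} You also do not show that the lifted $T'$-action on $V$ is good. The paper proves this with the polynomial $\prod_{\gamma\in\Gamma}(X-\gamma^*s)\in A[X]$.
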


In particular, \eqref{sigmaCC} pulls back to the product symplectic form
\[
 \phi^*\sigma_C=\sum_{i=1}^k\pr_i^*\omega_i
 \qquad\text{on }V^{\reg},
\]
where the pullback is understood by its unique extension from
$\phi^{-1}(C^{\reg})$.

This is clearly analogous to \cite[Theorem 25]{DGP} 
but comparing with their original statements, 
we note the following technical 
differences: firstly, $C$ is non-compact, 
the obtained cover is proven to be Galois, 
its factors are conical symplectic varieties and the 
non-\'etale locus has codimension at least $4$. 

\begin{proof}
We use $F_i$s and their algebraic leaves (which exist by
Lemma \ref{lem:leaves}) as key inputs. Throughout this proof,
\eqref{eq:radius-comparison} gives
$\kappa^{-1}r_\xi\le r\le\kappa r_\xi$ on $U$, where $r_\xi$ is continuous
and proper and $\kappa\ge1$.
First, let $P_i$ be the orthogonal projector from $T_U$ onto 
$F_i|_U$, also acting on the corresponding real 
$I$-invariant tangent distribution. Put 
\begin{equation}\label{eq:mr-operators}
 E_i=P_iE,\quad R_i=P_iR,\quad h_i=|R_i|^2,\qquad
 D_i=E-E_i,\quad S_i=D_i+\overline{D_i}=R-R_i.
\end{equation}
Algebraicity and normality make $E_i,D_i$ derivations of $A$.
Invariance of $P_i$ makes them commute with both original tori, hence
preserve each $A_\chi$. Parallelism and $\nabla R=\id$ give
\[
 \nabla R_i=P_i,\qquad dh_i=2g(R_i,-),\qquad \sum_i h_i=r^2.
\]
Since $S_i$ is orthogonal to $R_i$, these imply
\begin{equation}\label{eq:mr-ODE}
 S_i h_i=0,\qquad S_i(r^2-h_i)=2(r^2-h_i),\qquad (IS_i)r^2=0.
\end{equation}
For every $z\in\C$, matrix exponentiation on the finite-dimensional
spaces $A_\chi$ defines $e^{zD_i}$ on $A$. The derivation rule implies
that these maps preserve multiplication, and $e^{-zD_i}$ is their
inverse. They therefore define a holomorphic flow
$\Psi_{i,z}\colon C\to C$ whose maps are algebraic automorphisms,
with $\Psi_{i,z}^*=e^{zD_i}$. The flow fixes $o$ and preserves $U$.
Its real and imaginary generators are $S_i$ and $IS_i$.
\eqref{eq:mr-ODE} implies that, for $x\in U$ and $t\in\R$,

\begin{equation}\label{eq:mr-complement-flow}
 r^2(\Psi_{i,t}x)=h_i(x)+e^{2t}\bigl(r^2(x)-h_i(x)\bigr).
\end{equation}
On $U$, the imaginary-time flow preserves $r$, while the negative
real-time flow does not increase it. Put
\[
 K_U:=\{x\in U:r(x)\le1\},\qquad K:=\overline{K_U}\subset C.
\]
The comparison with $r_\xi$ gives $K\subset\{r_\xi\le\kappa\}$, so $K$
is compact. Smoothness and homogeneity of $r$ on $U$ show that $K$
contains a nonempty open subset. The flow properties on $K_U$ give
$\Psi_{i,it}(K_U)=K_U$ for $t\in\R$ and
$\Psi_{i,t}(K_U)\subset K_U$ for $t\le0$. Continuity of the flow maps
extends these same properties to $K$.
On $A_\chi$ use the norm $\|f\|_K=\sup_K|f|$. We have
\begin{equation}\label{eq:mr-norm}
 \|e^{itD_i}f\|_K=\|f\|_K\quad(t\in\R),\qquad
 \|e^{tD_i}f\|_K\leq\|f\|_K\quad(t\leq0).
\end{equation}
The first equality excludes nonreal eigenvalues and nontrivial Jordan
blocks of $D_i|_{A_\chi}$, since either would give an unbounded
imaginary-time flow. The second gives $e^{tb}\le1$ for an eigenvalue $b$
and $t\le0$, hence $b\ge0$. Thus $D_i$ is semisimple with non-negative
real spectrum on every $A_\chi$.

Let $A_b(D_i)=\ker(D_i-b\id)$ and set
\begin{equation}\label{eq:mr-BJ}
 B_i=\ker D_i,\qquad J_i=\bigoplus_{b>0}A_b(D_i),\qquad A=B_i\oplus J_i.
\end{equation}
The derivation rule adds eigenvalues under multiplication. Hence $B_i$ is a subalgebra,
$J_i$ is an ideal, and projection $\epsilon_i:A\to B_i$ is an algebra homomorphism
retracting the inclusion. In particular $B_i\simeq A/J_i$ is finitely generated.
It is normal: if $u\in\operatorname{Frac}(B_i)$ is integral over $B_i$, then $u\in A$ by normality,
and $D_i u=0$, so $u\in B_i$.
We set $Q_i:=\Spec B_i$. From the above discussions, we obtain 
\[
 j_i:Q_i\hookrightarrow C,\qquad \rho_i:C\to Q_i,\qquad \rho_i j_i=\id.
\]
Hence, $Q_i$ can be regarded as an ``algebro-geometric 
retraction" of $C$. 
Choose finitely many algebra generators $f_1,\ldots,f_N$ to embed $C$ in $\C^N$.
Every $f_\nu$ is a finite sum of $D_i$-eigenvectors, so
$f_\nu(\Psi_{i,t}x)\to\epsilon_i(f_\nu)(x)=f_\nu(j_i\rho_i(x))$.
Coordinatewise convergence in this closed embedding therefore gives
\begin{equation}\label{eq:mr-retractionlimit}
 j_i\rho_i(x)=\lim_{t\to-\infty}\Psi_{i,t}(x)\qquad(x\in C).
\end{equation}
For $x\in U$, \eqref{eq:mr-complement-flow} shows that
$r^2(\Psi_{i,t}x)\to h_i(x)$ as $t\to-\infty$.
Apply the comparison with $r_\xi$ along this flow. Since $r_\xi$ is
continuous, \eqref{eq:mr-retractionlimit} gives
\begin{equation}\label{eq:mr-radius}
 \kappa^{-2}r_\xi(j_i\rho_i(x))^2\le h_i(x)
 \le\kappa^2r_\xi(j_i\rho_i(x))^2\qquad(x\in U).
\end{equation}
For a general $x\in U$, we use Lemma \ref{lem:leaves} and consider
the connected maximal leaf of $F_j|_U$ through $x$,
which we denote by $L_j(x)$.

Write $\overline{L_j(x)}$ for its Zariski closure 
with the reduced scheme structure. 

For $i\ne j$, $dh_i|_{F_j}=0$, so $h_i$ is constant on $L_j(x)$.
Identifying each $Q_i$ with its closed image in $C$,
\eqref{eq:mr-radius} gives
\[
 r_\xi(\rho_i(y))\le\kappa\sqrt{h_i(x)}\qquad(y\in L_j(x)).
\]
The irreducible closure $\overline{L_j(x)}$ has a dense smooth open subset tangent to $F_j$ and
contained in this leaf; hence $L_j(x)$ is analytically dense in $\overline{L_j(x)}$.
Continuity of $r_\xi$ and $\rho_i$ extends this bound to the closure.
Properness of $r_\xi$ therefore places $\rho_i(\overline{L_j(x)})$
in a compact subset of $C$. Every affine coordinate of this image
is bounded. A nonconstant regular function on
$\overline{L_j(x)}$ would have constructible dense, hence cofinite, image in $\mathbb A^1$.
Thus $\rho_i|_{\overline{L_j(x)}}=\rho_i(x)$ for $i\ne j$.
Varying the general leaf and using algebraicity gives
\begin{equation}\label{eq:mr-leaves}
 d\rho_i(F_j)=0\quad(i\ne j),\qquad \dim Q_i\leq n_i.
\end{equation}
The differential assertion is understood on pulled-back regular functions.
Since $\rho_i$ is surjective, its generic rank is $\dim Q_i$; the vanishing 
on the complementary summands $\oplus_{j\le i}F_j$ 
gives the dimension bound. 
In particular,
\begin{equation}\label{eq:mr-constants}
 B_i=\{f\in A\mid df(F_j)=0\text{ for all }j\ne i\}.
\end{equation}
One inclusion follows from \eqref{eq:mr-leaves}; for the other, $D_i=\sum_{j\ne i}E_j$
is tangent to those directions.

As the product of the retraction morphism $\rho_i$s, we set 
$$p:=(\rho_1,\ldots,\rho_k):C\to Q=\prod_iQ_i$$ 
and let $B\subset A$ be the image of 
$\bigotimes_i B_i$. Use the original grading and denote the 
ideal with positive $\xi$-weights
of $B$ by $B_+$. For $x\in U$, the comparison with $r_\xi$ and
\eqref{eq:mr-radius} give
\[
 \kappa^{-2}r_\xi(x)^2\le r^2(x)=\sum_i h_i(x)
 \le\kappa^2\sum_i r_\xi(j_i\rho_i(x))^2.
\]
Thus
\begin{equation}\label{eq:mr-joint-radius}
 r_\xi(x)^2\le\kappa^4\sum_i r_\xi(j_i\rho_i(x))^2\qquad(x\in C).
\end{equation}
Indeed, the inequality holds on $U$, and both sides are continuous
on $C$, so density extends it to all of $C$. If
$\rho_i(x)=\rho_i(o)$ for every $i$, then the right-hand side is zero;
hence $r_\xi(x)=0$ and $x=o$. We obtain
$V(B_+A)=p^{-1}(\rho_1(o),\ldots,\rho_k(o))=\{o\},
 \qquad \dim_\C A/B_+A<\infty.$ 
Choose homogeneous lifts of a basis of this quotient. They generate 
$A$ over $B$ for an elementary ring-theoretic reason: 
after subtracting a linear combination of the lifts, a homogeneous element is a sum of 
positive $\xi$-weight elements of $B$ times elements of strictly 
smaller $\xi$-weight. Then we repeat the process. 
Thus $p$ is finite onto its closed image. By \eqref{eq:mr-leaves},
\[
 \dim C=\dim p(C)\leq\sum_i\dim Q_i\leq\sum_i n_i=\dim C, 
\]
so $p$ is surjective and $\dim Q_i=n_i$. 

Give $A$ the Poisson structure induced by $\sigma_C$. By 
\eqref{eq:mr-constants}, the Hamiltonian 
field of an element of $B_i$ lies in $F_i$. Its 
involutivity and the general identity 
$X_{\{f,g\}}=[X_f,X_g]$ show that $B_i$ is a Poisson subalgebra of $A$; moreover $\{B_i,B_j\}=0$ for $i\ne j$. 
Hence $Q$ has the product bracket and $p$ preserves 
the Poisson structure. 
$p$ is generically \'etale, so the Poisson 
bracket on the target $Q$ is also generically nondegenerate.
Let $\eta$ be the top exterior power of its inverse rational 2-form. For a prime divisor
$D'$ of $C$ lying over $D$ of $Q$, put $e=e(D'/D)$ and $a=\operatorname{ord}_D\eta\in\Z$.
The pullback $p^*\eta$ is the symplectic volume form. The standard ramification calculation gives $0=\operatorname{ord}_{D'}p^*\eta=ea+(e-1),\qquad e(a+1)=1$ so that $e=1$. This proves quasi-\'etaleness, and the Zariski-Nagata purity 
(cf., e.g., \cite[Lemma 58.21.4, Tag 0BMB]{Stacks})
implies $p$ is \'etale over $Q^{\reg}$. 

Set $U_i=Q_i^{\reg}$, $U_Q=\prod_iU_i$, and $C^\circ=p^{-1}(U_Q)$.
With basepoints chosen, the connected finite \'etale cover $C^\circ\to U_Q$ corresponds to
a finite-index subgroup $H\subset\Pi=\prod_i\Pi_i$, where $\Pi_i=\pi_1(U_i)$ are ordinary
topological fundamental groups. Regard each $\Pi_i$ as a coordinate subgroup and put
\begin{equation}\label{eq:mr-subgroups}
 H_i=H\cap\Pi_i,\qquad H_0=\prod_iH_i,\qquad \Gamma=H/H_0.
\end{equation}
Each $H_i$ has finite index in $\Pi_i$, and $H_0\triangleleft H$ since $\Pi_i\triangleleft\Pi$.
The Riemann existence theorem \cite[Expos\'e XII, Th\'eor\`eme 5.1]{SGA1}
algebraizes the covers $V_i^\circ\to U_i$ corresponding to $H_i$.
Their product covers $C^\circ$ with Galois group $\Gamma$.
For $h=(h_1,\ldots,h_k)\in H$, each $h_i$ normalizes $H_i$, so this action is factorwise.
If it is trivial on every factor except possibly $i$, then $h_j\in H_j$ for $j\ne i$.
Removing those coordinate elements leaves an element of 
$H\cap\Pi_i=H_i$. 
Thus $h\in H_0$. For any $({\rm id}\neq )g\in \Gamma$, its action 
on the factors 
$\{V_i\}_i$ is nontrivial for more than $i$s.  

Consider the spectra of the normalization of $Q_i$ 
in  $\C(V_i^\circ)$  to obtain a finite normal affine cover $V_i\to 
Q_i$. 
The factorwise action extends to $V=\prod_iV_i$. Both $V/\Gamma$ and $C$ are normal and
finite over $Q$, and agree over $U_Q$; equivalently, they are its normalization in the same
function field. Therefore $V/\Gamma\simeq C$. Since $C\setminus C^\circ$ has codimension
at least two, $\phi:V\to C$ is quasi-\'etale. 

The purity implies that $V^\dagger=\phi^{-1}(C^{\reg})\to C^{\reg}$ 
is an \'etale cover. 
The forms $\phi^*\sigma_i$ extend uniquely from $V^\dagger$ to  
$V^{\reg}$ by the Hartogs extension and their sum clearly remains 
nondegenerate. 
On $\prod_iV_i^\circ$, the \'etaleness of $p$ together with
\eqref{eq:mr-leaves} and equality of dimensions identifies $F_i$
with the $i$-th summand. 
Thus $\phi^*\sigma_i$ is defined on 
$V^{\reg}$ and $\phi^*\sigma_i=\pr_i^*\omega_i$. 

The quasi-\'etale cover $V$ is klt \cite[Proposition 5.20]{KM}; then each $V_i$ is klt by
taking smooth slices in the other factors. The extension theorem
\cite[Theorem 1.4]{GKKP} extends $\omega_i$ regularly to a resolution.
Thus each $V_i$ is symplectic.
A nonidentity finite-order symplectic automorphism has fixed locus of codimension at
least two: at a smooth fixed point its fixed tangent space is symplectic and the action
is locally linearizable, while the singular locus already has codimension at least two.
Every $\gamma\ne1$ moves at least two factors, so
\[
 \operatorname{Fix}_V(\gamma)=\prod_i\operatorname{Fix}_{V_i}(\gamma_i),\qquad
 \codim_V\operatorname{Fix}_V(\gamma)\geq4
\]
when the fixed locus is nonempty. The image of the union of these fixed loci is the
required $Z$; a finite map preserves dimension and the quotient is \'etale off that image.

Finally, we consider the action of 
two tori after replacing by isogenies. 
Put $G=T\times G_{\test}$ and choose a product isogeny 
$\alpha\times\beta:G'=T'\times G'_{\test}\to G$ killing the finite orbit 
monodromy $\pi_1(G)\to\Gamma$ of $V^\dagger\to C^{\reg}$.
The covering-space lifting criterion lifts the action to $V^\dagger$,
normalized to be the identity over $1\in G'$. Uniqueness gives the group law
and commutation with $\Gamma$. Riemann existence algebraizes this lift, and
normality extends it across the codimension-two complement to $G'\times V$.
The two lifted torus actions therefore commute. Since the original actions preserve each $F_i$, their lifts preserve 
each tangent summand $\pr_i^*T_{V_i^{\reg}}$ in 
$T_{V^{\reg}}=\bigoplus_i\pr_i^*T_{V_i^{\reg}}.$ 
So, for every $g'\in G'$, the differential of $\pr_i\circ g'$
vanishes in the directions tangent to $V_j$ whenever $j\ne i$.
Thus, the lifted action induces an action 
on each $V_i$. 

Let us write $A_V=\C[V]$, so $A_V^\Gamma=A$, and lift $\xi$ to
$\xi'=(d\alpha)^{-1}\xi$. 
Below, we show that the $T'$-action is good on $V$ 
by standard arguments: 
for a $\xi'$-homogeneous element $s\in A_V$ of
weight $w$, consider 
\[
 P_s(X):=\prod_{\gamma\in\Gamma}(X-\gamma^*s)
 \in A_V^\Gamma[X]=A[X].
\]
Since $T'$ commutes with $\Gamma$, the coefficient of $X^{|\Gamma|-j}$
has weight $jw$. If $w<0$, all nonleading coefficients vanish and
$P_s(s)=0$ forces $s=0$; if $w=0$, then $P_s\in\C[X]$, so that $s$ is constant. Hence, $\xi'$ is a positive Reeb
vector on $V$ and the desired goodness of the $T'$-action 
also follows. 

Let $\eta_i$ be the $G_{\test}$-character of $\sigma_i$, whose
$T$-character is $\tau$. For $g'=(t',u')$, write
$g=(\alpha(t'),\beta(u'))$ and let $g'_i$ denote the induced action of
$g'$ on $V_i$.
The identity $\pr_i^*\omega_i=\phi^*\sigma_i$ established above and
$\phi\circ g'=g\circ\phi$ give, first on $V^\dagger$ and then on
$V^{\reg}$ by extension,
\begin{align*}
 \pr_i^*((g'_i)^*\omega_i)
 &=(g')^*\phi^*\sigma_i=\phi^*(g^*\sigma_i)\\
 &=\tau(\alpha(t'))\,\eta_i(\beta(u'))\,\pr_i^*\omega_i.
\end{align*}
Restricting to a slice through smooth points of the other factors shows
that both lifted tori preserve $(V_i,\C\omega_i)$, whereas $\Gamma$
preserves $\omega_i$ itself. Since $\langle\alpha^*\tau,\xi'\rangle=2$,
an integral multiple of a rational direction sufficiently close to $\xi'$
is positive on finitely many homogeneous algebra generators and on the form
character. It gives a good action on each factor, making 
$\omega_i$ homogeneous with a positive weight. 
\end{proof}

\begin{remark}
Note that we did {\it not} 
claim that retract $Q_j$ is isomorphic to 
$\overline{L_j(x)}$ for general $x$. 
There is a priori a possibility such that, 
for example, in  
$C=(\C^2\times\C^2)/\langle(-1,-1)\rangle$, the leaf with a fixed
nonzero second vector has closure isomorphic to $\C^2$, whereas
$Q_1\simeq\C^2/\{\pm1\}$. The restriction of $\rho_1$ to that
closure is the degree-two quotient map, not an isomorphism.     
\end{remark}

For the rest of the proof, we fix the symplectic factors $V_i$
and the finite Galois cover $\phi:V=\prod_iV_i\to C$
constructed above. 
We continue with this same cover: \S \ref{sec:extendcover} extends
it over the formal family, and \S \ref{sec:tensors} supplies the
relative tensors and Casimir vanishing needed before applying
Theorem \ref{thm:SPD} in \S \ref{sec:recursive}.

\subsection{Deformation of finite cover}\label{sec:extendcover}
Recall $\Gamma$, the deck group of $V=\prod_iV_i\to C$, and $Z$, the branch locus of codimension at least $4$ 
from Theorem \ref{thm:cover}. The next lemma first extends the  underlying finite cover; a Poisson structure on the deformed cover is added afterwards.
Fix once and for all a product isogeny $\widetilde G=\widetilde T\times\widetilde G_{\test}\to T\times G_{\test}$ that lifts the actions in Theorem \ref{thm:cover}. In Sections \ref{sec:extendcover}-\ref{sec:recursive}, equivariance on the covered family refers to these lifted tori; to lighten notation we continue to write $T$ and $G_{\test}$ there. We keep the original groups downstairs. The finite kernel acts upstairs by deck transformations, and this fact will restore the original actions after taking $\Gamma$-invariants.
\begin{lemma}[Infinitesimal deformation of a finite cover]\label{lem:extendcover}
Let $\phi:V\to C$ be a finite Galois cover 
between normal affine Cohen-Macaulay varieties, \'etale over  $U=C\setminus Z$, with 
\[
\depth_Z\phi_*\OO_V\ge3,\qquad\depth_Z\OO_C\ge3.
\]
For every marked flat deformation $C_\Lambda$ over a local Artin $\C$-algebra $\Lambda$ with residue field $\C$, there is a unique flat deformation $V_\Lambda$ and finite map $V_\Lambda\to C_\Lambda$ extending this cover. Its deck action extends, $V_\Lambda/\Gamma=C_\Lambda$, and the construction is compatible with the given group actions and the base changes of the 
base Artin local ring. 
\end{lemma}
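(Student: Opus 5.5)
We construct $V_\Lambda$ by pushing forward from the \'etale locus, and use the depth hypotheses for flatness and uniqueness. Let $j\colon U\hookrightarrow C$ be the inclusion. Since $C_\Lambda$ has the same underlying topological space as $C$, $U$ is also an open subscheme $U_\Lambda\subset C_\Lambda$. The \'etale cover $\phi^{-1}(U)\to U$ deforms uniquely to a finite \'etale cover $V^\circ_\Lambda\to U_\Lambda$ by the topological invariance of the \'etale site (SGA1, Exp.~I, Th.~8.3). Uniqueness of the deformation of the underlying cover implies that the $\Gamma$-action extends, together with the lifted torus actions and any commuting finite group actions. Uniqueness also makes the construction commute with base change of Artin rings, and Galois descent on the \'etale site gives $V^\circ_\Lambda/\Gamma=U_\Lambda$. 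We then define
\[
V_\Lambda:=\Spec_{C_\Lambda}\, j_*\OO_{V^\circ_\Lambda}.
\]

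The key step is to show that $\mathcal B_\Lambda:=j_*\OO_{V^\circ_\Lambda}$ is $\Lambda$-flat and reduces to $\phi_*\OO_V$. We argue by induction on the length of $\Lambda$, using a small extension $0\to I\to\Lambda'\to\Lambda\to0$ with $I\simeq\C$. On $U$ we have the exact sequence $0\to I\otimes\OO_{V^\circ}\to\OO_{V^\circ_{\Lambda'}}\to\OO_{V^\circ_\Lambda}\to0$. Applying $j_*$ gives
\[
0\to I\otimes\phi_*\OO_V\to\mathcal B_{\Lambda'}\to\mathcal B_\Lambda\to R^1j_*(I\otimes\OO_{V^\circ}),
\]
where we identify $j_*\OO_{V^\circ}=\phi_*\OO_V$ using $\depth_Z\phi_*\OO_V\ge2$. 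The obstruction term is $H^2_Z(\phi_*\OO_V)$, which vanishes because $\depth_Z\phi_*\OO_V\ge3$. So the sequence is short exact, and the local flatness criterion together with induction gives flatness and $\mathcal B_\Lambda\otimes_\Lambda\C=\phi_*\OO_V$. The same argument with $\depth_Z\OO_C\ge3$ shows $j_*\OO_{U_\Lambda}=\OO_{C_\Lambda}$. Taking $\Gamma$-invariants commutes with $j_*$ and is exact in characteristic zero, so
\[
\mathcal B_\Lambda^\Gamma=j_*\OO_{U_\Lambda}=\OO_{C_\Lambda},
\]
which is $V_\Lambda/\Gamma=C_\Lambda$. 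Finiteness of $V_\Lambda\to C_\Lambda$ follows from Nakayama's lemma, since its reduction $\phi_*\OO_V$ is finite.

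For uniqueness, let $V'_\Lambda\to C_\Lambda$ be any other flat finite extension. Its restriction to $U_\Lambda$ is finite flat with \'etale central fiber, hence \'etale, so it agrees with $V^\circ_\Lambda$. Flatness gives a filtration of $\OO_{V'_\Lambda}$ with graded pieces $\phi_*\OO_V$, each of depth at least $2$ along $Z$. By induction, $\OO_{V'_\Lambda}$ is then $S_2$ along $Z$, so $\OO_{V'_\Lambda}=j_*\OO_{V^\circ_\Lambda}$. Hence $V'_\Lambda\simeq V_\Lambda$ canonically, which also yields the equivariance and base-change compatibility.

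The main obstacle is the $R^1j_*$ vanishing step: one must check that local cohomology of the non-flat intermediate sheaves is controlled entirely by the graded pieces. In the application, $\codim Z\ge4$ together with the Cohen--Macaulay property supplies the required depth.
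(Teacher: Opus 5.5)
Your proposal is correct and follows the paper's proof essentially step by step. Both arguments take the unique finite \'etale lift over $U_\Lambda$, push it forward by $j_*$, and use $R^1j_*=\mathcal H^2_Z(\phi_*\OO_V)=0$ (from $\depth_Z\phi_*\OO_V\ge3$) to get the short exact sequence, then obtain flatness by induction on length; uniqueness comes from the depth-$\ge2$ identification $N\simeq j_*(N|_{U_\Lambda})$ for any competing flat extension. Your explicit check of $V_\Lambda/\Gamma=C_\Lambda$, via $\Gamma$-invariants of $j_*$ and $j_*\OO_{U_\Lambda}=\OO_{C_\Lambda}$, fills in a point the paper only asserts; by your own filtration argument, $\depth_Z\OO_C\ge2$ already suffices there.
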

\begin{proof}
Note that $C$ and $C_\Lambda$ have 
the same underlying topological spaces. Thus, the open subset $U\subset C$ can be enhanced to an open subscheme 
$U_\Lambda\subset C_\Lambda$. Over $U_\Lambda$, it 
has a unique finite \'etale algebra $\mathcal E_\Lambda$ lifting  the cover $\phi^{-1}(U)\to U$, by \cite[Tag 0BQB (58.8.3)]{Stacks}. Set  $M_\Lambda:=j_*\mathcal E_\Lambda$. 
For a small extension $\Lambda\twoheadrightarrow\Lambda_0$ with kernel $I$, push forward the exact sequence 
\[
0\to I\otimes_{\C}E\to E_\Lambda\to E_{\Lambda_0}\to 0
\]
on $U_\Lambda$ by $(R^i)j_*$. 
The depth assumption gives $R^1j_*(\phi_*\OO_V|_U)=0$, since it is the local-cohomology sheaf $\mathcal H_Z^2(\phi_*\OO_V)$. 
Also, since $\depth_Z(\phi_*\OO_V)\ge2$, we have
$j_*\bigl((\phi_*\OO_V)|_U\bigr)\simeq\phi_*\OO_V$. 
Thus
\begin{equation}\label{eq:cover-exact}
0\longrightarrow I\otimes_\C\phi_*\OO_V\longrightarrow M_\Lambda\longrightarrow M_{\Lambda_0}\longrightarrow0.
\end{equation}
Induction on the length of $\Lambda$ proves coherence of $M_\Lambda$. Put $F=\phi_*\OO_V$ and $\mathfrak m=\mathfrak m_\Lambda$. Then $\ker(M_\Lambda\to M_{\Lambda_0})=IM_\Lambda$,
\[ M_\Lambda/IM_\Lambda\simeq M_{\Lambda_0},\qquad M_\Lambda/\mathfrak mM_\Lambda\simeq F \]
from the above. 
Thus multiplication $I\otimes_\Lambda M_\Lambda\simeq I\otimes_\C F\to M_\Lambda$ is injective, giving $\operatorname{Tor}_1^\Lambda(\Lambda_0,M_\Lambda)=0$.
The assumption of induction 
($\Lambda_0$-flatness of $M_{\Lambda_0}$) 
and \cite[Lemma 10.101.6]{Stacks} now implies $\Lambda$-flatness of $M_\Lambda$. Thus we obtain a finite flat deformation $V_\Lambda=\Spec_{C_\Lambda}M_\Lambda$. 

Any other flat finite extension 
${\rm Spec}_{C_\Lambda}(N)$ 
over $\Lambda$ satisfies 
$N\simeq j_*(N|_{U_\Lambda})$ 
by $\depth_Z\phi_*\OO_V\ge2$.
Thus we have the uniqueness 
$N\simeq j_*\mathcal E_\Lambda=M_\Lambda$, 
by \cite[Lemma 58.8.3, Tag 0BQB]{Stacks}. 
The existence of the $\Gamma$-action also follows from 
the uniqueness and the compatibility with the base change 
also follows from the construction. 
\end{proof}

Note that we can apply the above lemma \ref{lem:extendcover} 
to the setup of  Theorem \ref{thm:cover} i.e., to its  
$C_n=\Spec(\cR/t^{n+1}\cR)$. The compatible inverse limit is 
a finite formal morphism 
\begin{equation}\label{eq:formalcover}
\tilde{\phi}\colon \widehat\cV\longrightarrow\widehat\cW,\qquad
\RV:=\Gamma(\widehat\cV,\OO),\qquad
\RV/t\RV=\Gamma(V,\OO_V),\qquad(\RV)^\Gamma=\hR.
\end{equation}
The two torus actions and $\Gamma$ naturally lift to 
$\widehat\cV$. 

\subsection{Vanishing of homogeneous Casimir elements}\label{sec:tensors}
Now we work on the obtained finite formal cover $\tilde{\phi}\colon \widehat\cV\longrightarrow\widehat\cW$, and prepare a key 
lemma for the next subsection  \S \ref{sec:recursive}. 
That is, we lift the tensors of 
\S \ref{sec:flag} and use their inverse identity to prove 
a certain Casimir vanishing theorem. 

Pull back $\Pi$ on the \'etale locus of 
$\tilde{\phi}$ in \eqref{eq:formalcover}; \'etale base change uniquely lifts biderivations. The construction by $j_*$ extends its values to $\RV$, 
which thus obtains a regular formal Poisson bracket. 
We pull back $S$ to the covered formal family and use the same
notation for its unique extension to the relative smooth locus.
The pulled-back tensors continue to satisfy
\eqref{eq:inverse}.

For the recursive splitting in \S \ref{sec:recursive}, we 
recall 
the original elementary-divisor exponents and the symplectic
forms on the factors of the central fiber $V=\prod_iV_i$:
\begin{equation}\label{eq:fixed-factors}
0=e_1<\cdots<e_k=Q:=\qmax,\qquad
A_i:=\Gamma(V_i,\OO_{V_i}),\quad
\pi_i:=\sigma_{V_i}^{-1},
\end{equation}
where $\sigma_{V_i}=\omega_i$ is the form constructed in
Theorem \ref{thm:cover}.

On the smooth locus $V^{\reg}$ of the central fiber
$V=\prod_iV_i$, the leading forms are
$\phi^*\sigma_i=\pr_i^*\sigma_{V_i}$.
By the arguments of Proposition \ref{prop:blocks},
applied to their nondegenerate restrictions to the corresponding 
tangent summands, the identity $\Pi=t^QS^{-1}$ 
induces a natural correspondence between 
$t^{e_i}\sigma_{V_i}$ and $t^{Q-e_i}\pi_i.$ 
Consequently, the coordinate algebra and Poisson bracket
on the central fiber $V=\prod_iV_i$ are 
\[
\RV/t\RV=\bigotimes_{i=1}^k A_i,\qquad
\Pi_0=\pi_k+0_{\prod_{i<k}V_i}.
\]

Now, we prepare the following key lemma 
(which partly replaces the role of the argument in 
\cite[\S 3, Lemma 3.11]{NO} in a different manner.) 

\begin{lemma}[Vanishing of homogeneous Casimir elements]\label{lem:casimir}
Let $f\in\RV$ be Casimir for $\Pi$ and satisfy $E(f)=\lambda f$ with $\lambda\ne0$, where $E=Z_\xi$ is the fiberwise Euler field, so $E(t)=0$. Then $f=0$.
\end{lemma}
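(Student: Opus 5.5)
The plan is to turn the Casimir condition into the vanishing of the relative differential $df$, using the inverse identity \eqref{eq:inverse}, and then to recover $f$ itself by contracting with the Euler field.

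On the relative smooth locus of the covered formal family, $\Pi^\sharp$ is a map from relative $1$-forms to relative vector fields, and $S^\flat$ goes back. The identity \eqref{eq:inverse}, $S^\flat\Pi^\sharp=t^{Q}\id$ with $Q=\qmax$, was established downstairs. It pulls back to the cover: the tensors are lifted uniquely along the étale locus of $\tilde\phi$, and both sides are regular relative tensors. I will use the pulled-back identity on relative $1$-forms.

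\textbf{Step 1: the Casimir condition forces $t^Q df=0$.} If $f$ is Casimir, then $X_f=\Pi^\sharp(df)=0$, so
\[
t^Q\,df=S^\flat\Pi^\sharp(df)=0
\]
as a section of the relative cotangent sheaf $\Omega_{\widehat\cV/\C[[t]]}$ on the relative smooth locus.

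\textbf{Step 2: contract with $E$.} Contracting this identity with the fiberwise Euler field $E=Z_\xi$ gives $t^Q\lambda f=t^Q E(f)=0$. Here $E$ is tangent to the fibres because $E(t)=0$, and the lifted torus action makes $E$ a relative vector field on the cover.

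\textbf{Step 3: conclude $f=0$.} We have $\lambda\neq0$, and $\RV$ is flat, hence $t$-torsion free, over $\C[[t]]$. The identity holds on the dense relative smooth locus, whose complement in each truncation $V_n$ has codimension $\ge2$, and elements of $\RV$ are determined there by the $j_*$ construction. So $t^Q\lambda f=0$ in $\RV$, and therefore $f=0$.

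\textbf{Main obstacle.} The delicate point is making the torsion-freeness argument honest in the formal setting, where everything is defined only through the truncations $\RV/t^{N+1}$, which do have $t$-torsion. I would first try to avoid relying on torsion-freeness of relative differentials of a formal scheme, and argue at each finite level $N$ instead. Modulo $t^{N+1}$, the identity $S^\flat\Pi^\sharp=t^Q$ and $\Pi^\sharp(df)=0$ give $t^Q df\equiv0$. Relative differentials of the truncated smooth locus are locally free over $\C[t]/t^{N+1}$, so this yields $df\in t^{N+1-Q}\Omega$. Contracting with $E$ then gives
\[
\lambda f\in t^{N+1-Q}\,\RV/t^{N+1}\RV .
\]
This holds on the relative smooth locus, and hence on all of $V_N$ by the $j_*$ description, which uses the codimension and depth statements of Lemma \ref{lem:extendcover}. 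Since this holds for every $N$ and $\RV$ is $t$-adically separated, $f\in\bigcap_N t^{N+1-Q}\RV=0$.

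A secondary check is that $E$ lifts to the cover as a relative vector field compatible with the pulled-back $S$. This should follow from the construction of the lifted tori in Theorem \ref{thm:cover} and from the uniqueness of the extensions in \S\ref{sec:extendcover}.
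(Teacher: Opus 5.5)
Your proposal is correct and follows essentially the paper's proof: contract $S^\flat\Pi^\sharp(\drel f)=t^Q\drel f$ with $E$ to get $\lambda t^Qf=0$ on the relative smooth locus, then conclude with $t$-torsion-freeness and injectivity of restriction to that dense open. The one difference is how injectivity is obtained. The paper reduces modulo $t$ on the integral central fiber, divides by $t$ and uses $t$-adic separatedness, rather than citing the $j_*$ description of Lemma \ref{lem:extendcover}, which concerns the \'etale locus rather than the smooth locus. Your finite-level variant is exactly the remark following the lemma, where flatness over $\Lambda_n$ gives $\Ann(t^Q)=t^{n+1-Q}\OO$.
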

\begin{proof}
On the relative smooth formal locus above $C^{\reg}$, centrality and \eqref{eq:inverse} give
\begin{equation}\label{eq:casimir-vanishing}
0=\iota_E S^\flat\Pi^\sharp(\drel f)=t^Q E(f)=\lambda t^Qf.
\end{equation}
The formal structure sheaf is $t$-torsion-free by flatness, so $f=0$ on this open set. Restriction from $\RV$ to this open set is injective: a section restricting to zero has zero reduction on a dense open subset of the integral central fiber, hence is divisible by $t$; divide and repeat, then use $t$-adic separatedness. Thus $f=0$ globally.

\end{proof}

\begin{remark}
        More precisely, if $n+1\ge Q$, a homogeneous Casimir of nonzero Reeb weight modulo $t^{n+1}$ is zero modulo $t^{n+1-Q}$. Indeed, at order $n$ the structure sheaf is flat over $\Lambda_n$, not $t$-torsion-free. Flatness gives  $\Ann(t^Q)=t^{n+1-Q}\OO$.
\end{remark}

\subsection{Recursive splitting of the original product factors}\label{sec:recursive}
The proof of the following 
proposition \ref{prop:formal-product} 
is where Theorem \ref{thm:SPD}, which we prepared 
by using Harrison-Poisson cohomology, is applied in the main arguments. 
Let us set 
\[
A_{\le j}:=\bigotimes_{i\le j}A_i,\qquad A_{>j}:=\bigotimes_{i>j}A_i,
\qquad A_{\le0}=A_{>k}=\C.
\]
\begin{proposition}[Formal splitting]\label{prop:formal-product}
There is a marked $T\times G_{\test}\times\Gamma$-equivariant isomorphism of formal commutative algebras
\begin{equation}\label{eq:formal-product}
\RV\simeq\left(\bigotimes_{i=1}^k A_i\right)\cten_\C\C[[t]]
=\Gamma(V,\OO_V)\cten_\C\C[[t]].
\end{equation}
Under this isomorphism, on the formal relative smooth locus,
\begin{equation}\label{eq:tensor-normalform}
S=\bigoplus_{i=1}^k t^{e_i}\sigma_{V_i},\qquad
\Pi=\sum_{i=1}^k t^{Q-e_i}\pi_i.
\end{equation}
The second equality also holds as an equality of Poisson structures on the whole formal algebra. 
\end{proposition}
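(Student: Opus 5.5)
We argue by descending induction on the block index, applying Theorem~\ref{thm:SPD} once per symplectic factor, starting from the factor $A_k$ that carries the nonzero central bracket. The central fiber of $(\RV,\Pi)$ is $(A_k\otimes A_{<k},\pi_k+0)$. Here $A_k$ is the coordinate ring of a conical symplectic variety, so it satisfies $\HP^0(A_k)=\C$ and $\HP^1(A_k)=0$ by Lemma~\ref{lem:HP(A)}, and it has a smooth equivariant miniversal family $\cU_{A_k}$ over $D_{A_k}$ with coordinates $h_j$ of positive $\xi$-weight by Theorem~\ref{prop:namikawa}. Theorem~\ref{thm:SPD}, applied equivariantly for $T\times G_{\test}\times\Gamma$, then gives
\[
\RV\simeq\cU_{A_k}\cten_{D_{A_k},\varphi}\cB,\qquad \varphi(h_j)=f_j\in t\cB\cap\ZP(\cB),
\]
with $\cB$ a deformation of $(A_{<k},0)$. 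The $f_j$ are Casimirs of $\Pi$ on $\RV$, and they are homogeneous of nonzero Euler weight, since the $h_j$ have positive weight and $\varphi$ is equivariant. Lemma~\ref{lem:casimir} therefore forces $f_j=0$. The first factor thus splits off trivially: $\RV\simeq A_k\cten\cB$ with bracket $\pi_k+\Pi_{\cB}$, where the $G_{\test}$-weight on $t$ fixes the $t$-power in front of $\pi_k$ to be $t^{Q-e_k}=t^0$.

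Next we analyse $\cB$. Its bracket $\Pi_{\cB}$ vanishes modulo $t$. We want to show that it is divisible exactly by $t^{Q-e_{k-1}}$ and that $t^{-(Q-e_{k-1})}\Pi_{\cB}$ reduces to $\pi_{k-1}+0$. For this we use the normal form of $S$ from Proposition~\ref{prop:blocks}, transported to the cover. In the splitting, the $A_{<k}$-part of $S$ is divisible by $t^{e_{k-1}}$, and by \eqref{eq:inverse} the corresponding part of $\Pi=t^QS^{-1}$ has leading term $t^{Q-e_{k-1}}\pi_{k-1}$. Because the summand $A_k$ is now a literal tensor factor, the restriction of $S$ to the $\cB$-directions is well defined. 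The division by $t^{Q-e_{k-1}}$ is justified by $t$-torsion-freeness together with the same injectivity argument as in Lemma~\ref{lem:casimir}. We then apply Theorem~\ref{thm:SPD} to the rescaled deformation $(\cB,t^{-(Q-e_{k-1})}\Pi_{\cB})$ with $A=A_{k-1}$. The base parameters are again homogeneous Casimirs, and these are killed by the analogue of Lemma~\ref{lem:casimir}, using the rescaled identity $S_{\cB}^\flat\Pi_{\cB}'^\sharp=t^{e_{k-1}}\id$. Iterating down to $i=1$ yields \eqref{eq:formal-product}. The bracket becomes $\sum_i t^{Q-e_i}\pi_i$ on the whole algebra, and $S$ takes the form \eqref{eq:tensor-normalform} on the formal relative smooth locus by \eqref{eq:inverse}. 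Equivariance is maintained at each step by the equivariant clause of Theorem~\ref{thm:SPD}.

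\emph{Main obstacle.} The hardest step is the passage to the next block. After splitting off $A_k$, we must verify that the residual bracket on $\cB$ is genuinely divisible by the predicted $t$-power. We must also check that the rescaled family is still a flat Poisson deformation to which Casimir vanishing applies, i.e.\ that $S$ restricted to $\cB$ retains a usable inverse identity. This requires an $S$-orthogonality of the split factor $A_k$ with $\cB$ at all orders, not only modulo $t$. To secure it, I would compare $S$ with the product bracket via \eqref{eq:inverse} on the relative smooth locus, show that the mixed components of $S$ vanish, and then argue order by order in $t$ using the uniqueness of the elementary-divisor flag $N_r$ from \eqref{eq:flag}.
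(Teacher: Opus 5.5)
Your proposal is essentially the paper's proof. Both argue by descending induction over the blocks, apply Theorem~\ref{thm:SPD} to $(A_j\otimes A_{\le j-1},\pi_j+0)$, kill the parameters by Casimir vanishing, and rescale the residual bracket by $t^{-(e_j-e_{j-1})}$. For the Casimir step, the paper just notes that a Casimir of the rescaled residual bracket is also a Casimir of $\Pi$ on $\RV$, so Lemma~\ref{lem:casimir} applies verbatim and no rescaled analogue is needed.

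The obstacle you flag is in fact immediate. The product bracket $\pi_k+\Pi_{\cB}$ has no mixed components, so $S=t^{Q}\Pi^{-1}$ is block diagonal at all orders, and no order-by-order flag argument is required. One correction: the divisibility of the residual bracket comes from the residual form having largest elementary-divisor order $e_{k-1}$, which follows by transporting the intrinsic flag. It does not come from that form being divisible by $t^{e_{k-1}}$; that claim is false for $k\ge3$, since $e_1=0$.
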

The proof is a factor-by-factor recursive argument, 
in decreasing order of $e_i$. 
In particular, the first splitting and all later splittings 
are instances of the same step. 
\begin{proof}
We descend through the finite list $j=k,k-1,\ldots,1$. At stage $j$ we keep  $\cB^{(j)}$, a regular relative 2-form $S^{(j)}$, and a regular Poisson bracket $\Pi^{(j)}$, with a marked equivariant identification
\begin{equation}\label{eq:recursive-invariant}
\begin{aligned}
\RV&\simeq A_{>j}\cten_\C\cB^{(j)},
&\cB^{(j)}/t\cB^{(j)}&=A_{\le j},\\
S&=\left(\bigoplus_{i>j}t^{e_i}\sigma_{V_i}\right)\oplus S^{(j)},\\
\Pi&=\sum_{i>j}t^{Q-e_i}\pi_i+t^{Q-e_j}\Pi^{(j)},
&\Pi^{(j)}&=t^{e_j}(S^{(j)})^{-1}.
\end{aligned}
\end{equation}
The flag of $S^{(j)}$ on the smooth locus of the 
central fiber 
consists of the remaining original factors $V_i$ for $i\le j$, with their original orders $e_i$ and leading forms $\sigma_{V_i}$. In particular, 
\begin{equation}\label{eq:recursive-center}
(\cB^{(j)}/t,\Pi^{(j)}_0)
=(A_j\otimes A_{\le j-1},\pi_j+0).
\end{equation}
For $j=k$, all these assertions are the constructions of \S \ref{sec:tensors}, with $\cB^{(k)}=\RV$, $S^{(k)}=S$, and $\Pi^{(k)}=\Pi$.

\emph{Separate the current highest-order factor.}
Apply Theorem \ref{thm:SPD} to \eqref{eq:recursive-center}, with
\[
A=A_j,\qquad B=A_{\le j-1}.
\]
Theorem \ref{prop:namikawa} applies to $A_j$. In particular,
the degree-two cohomological input is
\eqref{eq:HP2}, with the zero bracket on $A_{\le j-1}$:
\begin{equation}\label{eq:recursive-HP2}
 \HP^2(A_j\otimes A_{\le j-1},\pi_j+0)
 \simeq \HP^2(A_j)\otimes A_{\le j-1}
       \oplus\HP^2(A_{\le j-1},0).
\end{equation}
This is the same cited Ginzburg-Kaledin formula from
\S \ref{sec:PD-kunneth}, not a further product theorem.
The mixed summand of the formula for the degree $3$ case \eqref{eq:HP3}, 
applied to the same two factors in Step 2 of Theorem \ref{thm:SPD}, 
makes the resulting parameter values actual Casimir elements.
We obtain $\cB^{(j-1)}$ and a presentation 
\begin{align}\label{decompsfamilles}
\cB^{(j)}
\simeq \cU_{A_j}\cten_{D_{A_j}}\cB^{(j-1)},\qquad
h_\alpha\longmapsto
f_\alpha\in t\cB^{(j-1)}\cap\ZP(\cB^{(j-1)}).
\end{align}
The index $\alpha$ labels the parameters of this particular universal base; it is unrelated to the block index $j$. Choose its coordinates to be $T$-eigenvectors. Equivariance of the base map makes each $f_\alpha$ homogeneous of the same positive Reeb weight.

Since $f_\alpha\in\ZP(\cB^{(j-1)})$ is also the image of
the Poisson-central parameter $h_\alpha\in D_{A_j}$,
we have $\{f_\alpha,-\}_{\Pi^{(j)}}=0$ on $\cB^{(j)}$.
Its brackets with $A_{>j}$ are zero by
\eqref{eq:recursive-invariant}, so
$\{f_\alpha,-\}_{\Pi}=0$ on $\RV$.
Lemma \ref{lem:casimir} therefore gives $f_\alpha=0$. Hence
\begin{equation}\label{eq:one-step-split}
\cB^{(j)}\simeq A_j\cten_\C\cB^{(j-1)},\qquad
\Pi^{(j)}=\pi_j+\beta^{(j-1)}(t),\qquad
\beta^{(j-1)}(0)=0.
\end{equation}

Under the isomorphism in \eqref{eq:one-step-split}, we have 
$S^{(j)}=t^{e_j}(\pi_j+\beta^{(j-1)}(t))^{-1}$, which gives 
\begin{equation}\label{eq:one-step-form}
S^{(j)}=t^{e_j}\sigma_{V_j}\oplus S^{(j-1)},\qquad
S^{(j-1)}=t^{e_j}\bigl(\beta^{(j-1)}(t)\bigr)^{-1}.
\end{equation}
The residual form is regular, since it is the difference of regular forms. Its regularity as a form on the residual factor can be checked on a slice through any fixed smooth point of the constant factor $V_j$. The flag and pairings \eqref{eq:flag}-\eqref{eq:leading} are intrinsic to $S$ and $t$. Since every isomorphism is the identity on the central fiber, they are transported to the same original flag and pairings. Thus $S^{(j-1)}$ has exactly the original remaining orders $e_1,\ldots,e_{j-1}$ and forms.

If $j>1$, its largest order is consequently
\[
e_{j-1}=\max\{e_i\mid i<j\}.
\]
We now get
\begin{equation}\label{eq:one-step-renormalize}
\beta^{(j-1)}(t)=t^{e_j-e_{j-1}}\Pi^{(j-1)},\qquad
\Pi^{(j-1)}=t^{e_{j-1}}(S^{(j-1)})^{-1},
\end{equation}
where $\Pi^{(j-1)}$ is regular and restricts to $\pi_{j-1}+0$ at $t=0$. Indeed, the local normal form shows that every value of
$\beta^{(j-1)}(t)$ is divisible by $t^{e_j-e_{j-1}}$
on the relative smooth locus. This divisibility holds
in $\cB^{(j-1)}$ as well. To see this, let
$r=e_j-e_{j-1}$ and let $f$ be such a bracket value.
Its reduction modulo $t$ vanishes on the dense smooth
locus of the integral central fiber, hence vanishes
everywhere. Thus $f=tf_1$. By flatness, multiplication
by $t$ is injective on the relative smooth locus,
so $f_1$ is divisible there by $t^{r-1}$.
Repeating this argument gives
$f\in t^r\cB^{(j-1)}$.
The quotient therefore defines a regular bracket
$\Pi^{(j-1)}$ on the whole formal algebra.
Since $t$ is automatically Casimir, its Leibniz and Jacobi
identities follow from those of $\beta^{(j-1)}(t)$
and $t$-torsion-freeness. 

This recovers \eqref{eq:recursive-invariant} with $j$ replaced by $j-1$. After the step $j=1$, we have
$\cB^{(0)}/t\cB^{(0)}=\C$, hence
$\cB^{(0)}\simeq\C[[t]]$.
Repeating the argument for $j=k,k-1,\ldots,1$
therefore proves \eqref{eq:formal-product}
and \eqref{eq:tensor-normalform}.
\end{proof}

Thus, we have obtained triviality of the formal deformation 
$\tilde{\mathcal{V}}$ of 
the product cover $V$. 
In \S \ref{sec:conclusion}, we 
turn to discuss $\widehat\cW$. 

\section{Completion of the proofs of the main statements}\label{sec:conclusion}
Given the last subsection, it remains to 
take $\Gamma$-invariants 
to obtain the formal triviality for $\widehat\cW$, 
and then show that this trivialization can be taken  
algebraically. 
This completes rigidity under \ref{hyp:grading}-\ref{hyp:metric}; only
the final corollary returns to the original germ $(X,x)$ and uses its
comparison with $W$.

\begin{proof}[Proof of Theorem \ref{thm:rigidity}]
Proposition \ref{prop:formal-product} gives a $\widetilde G\times\Gamma$-equivariant formal product, with the lifted groups fixed in \S \ref{sec:extendcover}. The kernel of $\widetilde G\to T\times G_{\test}$ acts by deck transformations, on the formal cover as well as its central fiber, by uniqueness in Lemma \ref{lem:extendcover}. Thus it acts trivially on $\Gamma$-invariants. Since $V/\Gamma=C$ and $\widehat{\cV}/\Gamma=\widehat{\cW}$, taking 
$\Gamma$-invariants gives an isomorphism equivariant for the original algebraic 
torus $T\times G_{\test}$ as 
\begin{equation}\label{eq:downstairs-formal}
\hR\simeq\Gamma(C,\OO_C)\cten_\C\C[[t]].
\end{equation}
For each $T$-character $\gamma$, the module $\cR_\gamma$ is finite free over $\C[t]$ by \ref{hyp:grading}; the corresponding module $\Gamma(C,\OO_C)_\gamma[t]$ on the product side also has finite rank. Choose a $G_{\test}$-homogeneous $\C[t]$-basis $e_1,\ldots,e_s$ of either module, with weights $b_1,\ldots,b_s$. Such a basis exists by lifting a homogeneous basis modulo $t$ and using the graded Nakayama lemma: these finitely generated graded modules are bounded below because $d>0$. The resulting surjection is an isomorphism, since both free modules have the same rank. In its completion, a formal section of test weight $b$ can contain $t^\ell e_j$ only if $b_j+\ell d=b$, where $d=\wt_{G_{\test}}(t)>0$. For each $j$ there is at most one such $\ell\ge0$, so the section is polynomial in $t$.

Recall $A=\Gamma(C,\OO_C)$, and denote the isomorphism 
\eqref{eq:downstairs-formal} by $\widehat\Psi$.
For any $f\in\cR$ homogeneous with respect to both
$T$ and $G_{\test}$, equivariance implies that
$\widehat\Psi(f)$ has the same weights, so the preceding
observation gives $\widehat\Psi(f)\in A[t]$.
Every element of $\cR$ is a finite sum of such weight
vectors; hence $\widehat\Psi(\cR)\subset A[t]$.
The same argument applied to the inverse gives
$\widehat\Psi^{-1}(A[t])\subset\cR$.
Thus the two maps restrict to mutually inverse
$\C[t]$-algebra maps and it gives the required
$T\times G_{\test}$-equivariant isomorphism
$\cR\simeq A[t]$. 
\end{proof}

\begin{proof}[Proof of Corollary \ref{cor:hk}]
Proposition \ref{prop:blocks} constructs $\sigma_C$ and the hyperK\"ahler structures for the given metric. Theorem \ref{thm:rigidity} identifies $W$ with $C$. By \cite[Theorem 6.2]{XZ}, the completed local rings of $x\in X$ and the vertex of $W$ are isomorphic; the earlier comparison under the hypotheses of \cite[\S4]{NO} is consistent with this conclusion.

The isomorphism of completed local rings implies 
the isomorphism of the analytic germs 
$(X,x)^{\mathrm{an}}\simeq(C,o)^{\mathrm{an}}$
by Artin approximation 
(cf.\ \cite[Lemma 37.39.2, Tag 0CAV]{Stacks}
and \cite{HironakaRossi, Artin}). 
\end{proof}

\end{document}